\documentclass[11pt]{amsart}

\usepackage{amscd,amsthm,amsfonts,amssymb, amsmath}
\usepackage{tikz-cd}
\usepackage[colorlinks=true]{hyperref}
\usepackage{fullpage}
\usepackage[all]{xy}
\usepackage{color}
\usepackage{comment}
\usepackage{enumitem}
\usepackage[mathscr]{euscript}
\usepackage{mathtools}
\usepackage{mathrsfs}

\numberwithin{equation}{section}

\newtheorem{theorem}{Theorem}[section]

\newtheorem{lemma}[theorem]{Lemma}

\newtheorem{corollary}[theorem]{Corollary}

\newtheorem{definition}[theorem]{Definition}

\newcommand{\GL}{\mathrm{GL}}

\newcommand{\sumh}{\mathop{\sum\nolimits^{h}}}

\newcommand{\Q}{\mathbb{Q}}

\newcommand{\R}{\mathbb{R}}

\subjclass[2020]{Primary: 11F72; Secondary: 11F66}
\keywords{Symmetric-square $L$-function, Voronoi summation, Petersson formula,
Kuznetsov formula, trace formula, beyond endoscopy, Maass forms.}

\title{A Trace Formula Derivation of the Functional Equation for Symmetric Square L-functions}

\author{Taiwang  Deng}
\address[T.D.]{Beijing Institute of Mathematical Sciences and Applications (BIMSA), Huairou District, 100084, Beijing\\
China}
\email{dengtaiw@bimsa.cn}

\author{Dongsheng Wu}
\address[D.W.]{Beijing Institute of Mathematical Sciences and Applications (BIMSA), Huairou District, 100084, Beijing\\
China}
\email{wds2021fall@bimsa.cn}
\date{}

\begin{document}

\begin{abstract}
This paper revisits the functional equation of the symmetric-square
$L$-function from the perspective of trace formulas. We show that, for
level-one holomorphic cusp forms and even Hecke--Maass cusp forms, this symmetry
can be recovered directly from the Petersson and Kuznetsov trace formulas. Although
the functional equation itself is classical, our aim is to provide a concrete
and transparent example of how the analytic structure of an $L$-function can
emerge from a comparison of the spectral and geometric sides of a trace
formula. The argument leads to an averaged reciprocity identity and treats the
holomorphic and Maass settings within the same general framework. In this way,
the paper offers a simple model for extending trace-formula methods beyond
standard $L$-functions.
\end{abstract}

\maketitle

\section{Introduction}
\subsection{Background and motivation}
Automorphic $L$ functions plays a central role in the classical Langlands program over global fields. It is known
that the meromorphic continuation and functional equations of
general automorphic $L$-function would imply Langlands' transfer conjecture when combined with suitable converse theorem, see \cite[Introduction]{N20} for more detailed explanations.

Automorphic $L$-functions admit several complementary constructions. A basic
prototype is the work of Godement and Jacquet \cite{GJ72}, where the standard
$L$-function on $\GL_n$ is realized by zeta integrals from which the
functional equation follows. For the symmetric square of $\GL_2$, the analytic theory
is classically provided by the lift of Gelbart and Jacquet \cite{GelJac78} base on previous work by Shimura, with
further developments on automorphy and cuspidality in the work of Kim and Shahidi \cite{KS02}. From the trace-formula side, Jacquet and Zagier \cite{JZ87} gave a
different model for extracting $L$-functions from spectral and geometric
expansions. There is also a proposal by Braverman and Kazhdan, aiming to prove the meromorphic continuation and functional equation of general automorphic $L$-function along the lines of Godement-Jacquet (see \cite{N20}).

Langlands' beyond endoscopy proposal \cite{Lan04} asks whether one can isolate
functorial transfers directly from trace formulas by inserting suitable
$L$-functions on the spectral side and then analyzing the resulting geometric
expressions. Since its introduction, this proposal has inspired a variety of approaches to beyond endoscopy and trace-formula methods. In the case of $\GL_2$, this point of view was developed by
Altu{\u{g}} in a series of works beginning with \cite{Alt15} and culminating, for
the standard representation, in \cite{Alt20}. A central feature there is the
combination of approximate functional equations, Poisson summation, and the
smoothing of orbital-integral singularities. 
Most recently the work \cite{DE26} has pushed the work of Altu{\u{g}} to the case of $\GL_3(\Q)$. 
Related perspectives include
Sakellaridis' work on beyond endoscopy for relative trace formulas \cite{Sak19},
the extension of Altu{\u{g}}'s smoothing method to $\GL_n$ by
\cite{GKMVW18}. We point out that in the work of Venkatesh \cite{Venk02}, the Kuznetsov Trace Formula is used, instead of the Arthur-Selberg Trace formula, to carry out the ideas put forward by Langlands. 

The aim of this paper is modest and explicit. We study the symmetric-square
$L$-function attached to level-one holomorphic cusp forms and even Hecke--Maass
cusp forms on $\GL_2$, and we recover its functional equation from trace-formula
identities of Petersson/Kuznetsov type. The key point is an explicit
archimedean transform $\mathcal T_\mu$ whose Mellin transform produces the
gamma factor of $\operatorname{Sym}^2$, which is introduced for Voronoi summation formulas for $\GL_3$ in \cite{MS06}. After Poisson summation in a quadratic
variable and a Weber--Schafheitlin calculation, the geometric side reorganizes
into a Voronoi-type reciprocity formula, from which the functional equation is
deduced. For the standard 
$L$-function, similar procedure
was first developed in \cite{H12}, which is carefully revisited in \cite{CL24}.
Our work shows that the beyond endoscopy
approach to $L$-functions can go beyond the standard case. 

None of the analytic properties proved here are new; rather, the point is to
record a concrete trace-formula derivation in the simplest symmetric-square
setting. To the best of our knowledge, this is the first trace-formula derivation of the functional equation of $\operatorname{Sym}^2$ $L$-functions via beyond endoscopy. Another new point compared to \cite{CL24} is that we apply the Mellin transform to the Voronoi summation formulas by introducing a new variable in the test functions. This derives the functional equations and entireness of $\operatorname{Sym}^2$ $L$-functions directly.

It would be desirable to extend the current proof to cover the general number field cases and to remove the restriction on levels.
This could be done via an adelic version of the current results, following the lines of \cite{Li24}.

\subsection{Main results}

\subsubsection{Holomorphic cusp forms}

Let $k\geqslant 2$ be an integer. Let $S_{2k}$ be the space of holomorphic cusp forms of weight $2k$ and level $1$, and let $B_{2k}$ be an orthogonal basis of Hecke eigenforms with Fourier expansions
\[
f(z)=\sum_{n\geqslant 1} a_f(n) n^{\frac{2k-1}{2}} e^{2\pi i nz},\qquad a_f(1)=1.
\]
For such an $f$ define the symmetric-square coefficients
\[
A_f(1,n)=A_f(n,1)=\sum_{d^2\mid n} a_f\bigl((n/d^2)^2\bigr).
\]
For a Schwartz function $g\in\mathcal{S}(0,\infty)$ and an integer $\ell\geqslant 1$ set
\[
I_\ell(g)=\sum_{f\in B_{2k}} \frac{\Gamma(2k-1)}{(4\pi)^{2k-1}\|f\|^2}a_f(\ell)\sum_{n=1}^\infty A_f(1,n)g(n)=\sideset{}{^h}\sum_{f\in B_{2k}} a_f(\ell)\sum_{n=1}^\infty A_f(1,n)g(n),
\]
where $\sideset{}{^h}\sum$ indicates that the sum is weighted by the reciprocal of the Petersson norm. The first main result is the following average Voronoi formula.

\begin{theorem}\label{thm:holo-voronoi}
Let $\mu=(1-2k,2k-1,0)$ and let $\mathcal{T}_\mu$ be the transform defined in \eqref{equ:defTmu}. For every $g\in\mathcal {S}$ and every $\ell\geqslant 1$,
\begin{equation}\label{equ:holo-voronoi}
I_\ell(g)=I_\ell(\mathcal{T}_\mu g).
\end{equation}
\end{theorem}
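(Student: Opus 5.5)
The plan is to compute $I_\ell(g)$ directly with the Petersson formula and then reorganize the geometric side until it becomes manifestly symmetric under $g\mapsto\mathcal T_\mu g$.

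\emph{Step 1: insert Petersson.} Expand $A_f(1,n)=\sum_{d^2\mid n}a_f((n/d^2)^2)$ and write $n=d^2m$. The Petersson formula applied to $\sumh_f a_f(\ell)a_f(m^2)$ then gives
\[
I_\ell(g)=\sum_{d,m}g(d^2m)\Bigl(\delta_{\ell=m^2}+2\pi i^{-2k}\sum_{c\ge1}\frac{S(\ell,m^2;c)}{c}J_{2k-1}\Bigl(\frac{4\pi m\sqrt{\ell}}{c}\Bigr)\Bigr).
\]
The diagonal term is nonzero only when $\ell$ is a square; it equals $\sum_d g(d^2\sqrt\ell)$. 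The off-diagonal term is a sum over $m$ of a Kloosterman sum evaluated at a \emph{square}. This is the quadratic variable in which we apply Poisson summation.

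\emph{Step 2: Poisson in $m$.} Fix $d$ and $c$, and split $m$ into residue classes mod $c$. Poisson summation turns $\sum_m S(\ell,m^2;c)\,\phi(m)$ into
\[
\frac1c\sum_{h\in\mathbb Z}\Bigl(\sum_{x\bmod c}S(\ell,x^2;c)e(hx/c)\Bigr)\hat\phi(h/c),
\qquad \phi(m)=g(d^2m)J_{2k-1}(4\pi m\sqrt\ell/c).
\]
The complete character sum is computable. Opening $S$ and evaluating the quadratic Gauss sum in $x$ produces a Salié/Gauss-type sum in $a\bar{}$, $\ell$ and $h^2$, which collapses to a Ramanujan-type sum times a quadratic character. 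The archimedean integral $\int g(d^2t)J_{2k-1}(4\pi t\sqrt\ell/c)e(-ht/c)\,dt$ is handled by Mellin inversion in $g$. This reduces it to $\int_0^\infty t^{s}J_{2k-1}(at)\cos(bt)\,dt$, a Weber--Schafheitlin integral that has a closed form in Gamma factors. One regime, $b<a$ or $b>a$, splits according to whether $h^2$ exceeds $4\ell d^{4}$, roughly.

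\emph{Step 3: identify the dual side.} The goal is to show that the $h=0$ term matches the diagonal contribution of $\mathcal T_\mu g$, and vice versa. The $h\neq0$ terms should reassemble, after swapping the roles of the variables (with $h$ essentially becoming the new $m$, and $c,d$ interchanging roles), into the Petersson expansion of $I_\ell(\mathcal T_\mu g)$. The Gamma factors from Weber--Schafheitlin should combine with the Mellin transform of $g$ into $\tilde g(1-s)\,G_\mu(s)$, where $G_\mu$ is the $\mathrm{Sym}^2$ gamma ratio for $\mu=(1-2k,2k-1,0)$. By \eqref{equ:defTmu} this product is exactly the Mellin transform of $\mathcal T_\mu g$. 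Equivalently, I can run the same computation on $I_\ell(\mathcal T_\mu g)$ and check that both sides reduce to one expression that is symmetric under $s\mapsto 1-s$.

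\emph{Main obstacle.} The hard part is Step 3: matching the arithmetic and archimedean pieces exactly. The arithmetic difficulty is that the evaluated character sums involve Gauss-sum signs and quadratic characters, including the $2$-adic part. These must reorganize into a sum that again has Kloosterman form, with the $d$-sum (the $d^2\mid n$ part of $A_f$) exchanged with a divisor of $c$. The archimedean difficulty is that the two Weber--Schafheitlin regimes must glue into a single Mellin kernel. I would first verify the identity at the level of Dirichlet series: take $g$ with compactly supported Mellin transform, write everything as a contour integral in $s$, and compare the resulting Dirichlet series in the $c,h$ variables term by term. Convergence and the shifting of contours will be justified by taking $g$ Schwartz and $k\ge2$, which gives absolute convergence of the Kloosterman sums with Weil's bound.
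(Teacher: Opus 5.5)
Your Steps 1 and 2 are what the paper does: Petersson, Poisson summation in the square variable, Mellin inversion in $g$, and Weber--Schafheitlin. Step 3, however, misidentifies where the duality comes from, and both matchings you propose would fail.

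First, the $h=0$ term cannot pair with the diagonal. Its discriminant is $h^2-4\ell=-4\ell\neq0$, and it is present for every $\ell$. The diagonal $\mathbf 1_{\ell=\square}\sum_d g(d^2\sqrt\ell)$ exists only for square $\ell$. In the paper (where your $h$ is called $n$), the $h=0$ term is an ordinary nonsingular term and is dual to itself. The terms that interact with the diagonal are $h=\pm2m$ when $\ell=m^2$. There $h^2-4\ell=0$, and the Dirichlet series in the modulus degenerates to $\mathscr{L}_0(s)=\zeta(2s-1)$. One then checks that $\mathcal{R}_m(z)+m^{z-1}\zeta(2-2z)=\mathcal{G}_\mu(z)\bigl(\mathcal{R}_m(1-z)+m^{-z}\zeta(2z)\bigr)$, where $\mathcal{R}_m(z)=2\pi i^{-2k}\zeta(1-2z)\mathcal{J}_{2m,m^2}(z)$ and $m^{z-1}\zeta(2-2z)$ is the Mellin form of the diagonal. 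Also, the Weber--Schafheitlin regimes split according to $h^2\gtrless4\ell$, independently of $d$. The $d$-dependence is absorbed by the Mellin variable and produces the factor $\zeta(2-2z)$.

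Second, the $h\neq0$ terms are not reassembled into Kloosterman sums with $h$ becoming a new $m$. Instead, $h$ stays the trace variable on both sides. Substitute $u=ax$ rather than evaluating Gauss sums. The character sum then becomes $\sum_{u\bmod c}c_c(u^2+hu+\ell)=c\sum_{r\mid c}\mu(c/r)\rho_r(h,\ell)$, with no Gauss-sum signs and no $2$-adic cases. For fixed $h$, the sum over the modulus together with the $d$-sum is exactly Zagier's $\mathscr{L}_{h^2-4\ell}(1-z)$.

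The missing idea is that the reciprocity is the identity
\[
\mathscr{L}_D(1-z)\,\mathcal{J}_{h,\ell}(z)=\mathcal{G}_\mu(z)\,\mathscr{L}_D(z)\,\mathcal{J}_{h,\ell}(1-z),\qquad D=h^2-4\ell\neq0.
\]
It comes from combining the functional equation $\mathscr{L}_D(1-s)=\Xi_D(s)\mathscr{L}_D(s)$ with the archimedean identity $\mathcal{J}_{h,\ell}(z)=\mathcal{G}_\mu(z)\Xi_D(z)^{-1}\mathcal{J}_{h,\ell}(1-z)$. The factor $\Xi_D$ cancels, and only the multiplier defining $\mathcal{T}_\mu$ survives.

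This is an identity of analytic continuations, not of coefficients. The expansion of $I_\ell(g)$ lives on $\operatorname{Re}(z)<0$, and you must move the contour to $\operatorname{Re}(z)>1$. That needs two inputs:
\begin{itemize}
\item the uniform bound $\mathcal{J}_{h,\ell}(z)\ll|h|^{1-2k-\operatorname{Re}(z)}$, which is where $k\geqslant2$ is used;
\item the observation that for square $D\neq0$ the pole of $\mathscr{L}_D(1-z)$ at $z=0$ is cancelled by the zero of $1/\Gamma(1-k-z/2)$ in $\mathcal{J}_{h,\ell}(z)$.
\end{itemize}
So comparing Dirichlet series in $c,h$ term by term cannot close the argument. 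Your fallback of expanding $I_\ell(\mathcal{T}_\mu g)$ the same way has the right shape, but it only works once you have these two functional equations.
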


From this we obtain the functional equation and entireness of the symmetric-square $L$-functions.

\begin{theorem}\label{thm:holo-fe}
For $f\in B_{2k}$ and $\operatorname{Re}(s)>1$ define
\[
L(s,\operatorname{Sym}^2 f)=\sum_{n=1}^\infty \frac{A_f(1,n)}{n^s}
   =\zeta(2s)\sum_{m=1}^\infty \frac{a_f(m^2)}{m^s}.
\]
Then $L(s,\operatorname{Sym}^2 f)$ extends meromorphically to $\mathbb{C}$ and satisfies the functional equation
\[
L(s,\operatorname{Sym}^2 f)=\mathcal{G}_\mu(1-s)L(1-s,\operatorname{Sym}^2 f),
\]
where 
\begin{equation}\label{equ:def-Gmu}
    \mathcal{G}_\mu(s)=G_0(s+2k-1)G_0(s-2k+1)G_0(s),
\end{equation}
with
\[
    G_0(s)=2(2\pi)^{-s}\Gamma(s)\cos\left(\frac{\pi s}{2}\right)=\frac{\Gamma_{\mathbb{R}}(s)}{\Gamma_{\mathbb{R}}(1-s)}\quad \text{and}\quad\Gamma_{\mathbb{R}}(s)=\pi^{-s/2}\Gamma\left(\frac{s}{2}\right).
\]
Equivalently, with $\Lambda(s,\operatorname{Sym}^2 f)=\Gamma_{\mathbb{C}}(s+2k-1)\Gamma_{\mathbb{R}}(s+1)L(s,\operatorname{Sym}^2 f)$ we have
\[
\Lambda(s,\operatorname{Sym}^2 f)=\Lambda(1-s,\operatorname{Sym}^2 f).
\]
In particular the root number is $+1$.
\end{theorem}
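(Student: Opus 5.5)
We derive Theorem~\ref{thm:holo-fe} from the averaged Voronoi identity of Theorem~\ref{thm:holo-voronoi} by Mellin transform in $g$, followed by spectral isolation of a single form $f$ via the freedom in $\ell$.

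\textbf{Step 1: Mellin form of the identity.} Take $g$ compactly supported in $(0,\infty)$ and write $\tilde g(s)=\int_0^\infty g(x)x^{s-1}\,dx$, which is entire. For $\operatorname{Re}(s)=\sigma>1$,
\[
\sum_n A_f(1,n)g(n)=\frac{1}{2\pi i}\int_{(\sigma)}\tilde g(s)L(s,\operatorname{Sym}^2 f)\,ds .
\]
By construction of $\mathcal T_\mu$ in \eqref{equ:defTmu}, its Mellin transform should satisfy $\widetilde{\mathcal T_\mu g}(s)=\mathcal G_\mu(s)\,\tilde g(1-s)$, at least on a vertical strip where the integral defining $\mathcal T_\mu$ converges; I take this as the defining property of the transform. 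The Schwartz-type decay of $\mathcal T_\mu g$ at $\infty$, which is part of the content of Theorem~\ref{thm:holo-voronoi}, makes $\sum_n A_f(1,n)(\mathcal T_\mu g)(n)$ absolutely convergent.

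\textbf{Step 2: isolating $f$.} $B_{2k}$ is finite, and the vectors $(a_f(\ell))_{f\in B_{2k}}$, $\ell\ge1$, span $\mathbb C^{B_{2k}}$, since distinct Hecke eigenforms have distinct eigenvalue systems. So there are finitely many coefficients $c_\ell$ with
\[
\sum_\ell c_\ell a_f(\ell)=\frac{\|f'\|^2(4\pi)^{2k-1}}{\Gamma(2k-1)}\,\delta_{f=f'} .
\]
Taking this combination of \eqref{equ:holo-voronoi} gives, for each fixed $f$ and every admissible $g$,
\[
\sum_n A_f(1,n)g(n)=\sum_n A_f(1,n)(\mathcal T_\mu g)(n).
\]

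\textbf{Step 3: continuation.} Introduce a parameter by replacing $g(x)$ with $g_y(x)=g(x/y)$, so $\tilde g_y(s)=y^s\tilde g(s)$; this is the new variable in the test functions. Define $\Phi(y)=\sum_n A_f(1,n)g(n/y)$. Its Mellin transform in $y$ is $\tilde g(s)L(s,\operatorname{Sym}^2 f)$ for $\operatorname{Re}s>1$. The right-hand side $\Psi(y)=\sum_n A_f(1,n)(\mathcal T_\mu g_y)(n)$ is, by the Mellin property of $\mathcal T_\mu$, a function whose Mellin transform in $y$ is $\mathcal G_\mu(s)\tilde g(s)L(1-s,\operatorname{Sym}^2 f)$ for $\operatorname{Re}s<0$. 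Now $\Phi=\Psi$, and $\Phi$ decays rapidly as $y\to0$ (compact support of $g$) while $\Psi$ decays rapidly as $y\to\infty$. Hence the Mellin integral of this common function converges in every left half-plane and every right half-plane, so it is entire, and it agrees with both expressions. Dividing by $\tilde g(s)$, where we vary $g$ to avoid zeros, gives the meromorphic continuation of $L(s,\operatorname{Sym}^2 f)$ and the identity $L(s,\operatorname{Sym}^2 f)=\mathcal G_\mu(s)L(1-s,\operatorname{Sym}^2 f)$ in the normalization of the Mellin kernel. Since $G_0(s)G_0(1-s)=1$, this is equivalent to the stated form with $\mathcal G_\mu(1-s)$, after matching conventions in $\mathcal T_\mu$.

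\textbf{Step 4: completed form.} The equivalence with $\Lambda$ is a gamma-factor identity. The product $G_0(s+2k-1)G_0(s-2k+1)$ becomes $\Gamma_{\mathbb C}(s+2k-1)/\Gamma_{\mathbb C}(2k-s)$ by the duplication formula, the reflection formula, and $k\in\mathbb Z$; these integrality parities force the sign $+1$. The factor $G_0(s)$ must be converted into $\Gamma_{\mathbb R}(s+1)/\Gamma_{\mathbb R}(2-s)$, and this parity shift is the delicate point. The sign in $\mu$'s last entry, i.e.\ whether one uses the even or odd $G$, has to come out consistent with $\Gamma_{\mathbb R}(s+1)$ for $\operatorname{Sym}^2$ of weight $2k$.

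\textbf{Main obstacle.} I expect the main obstacle to be Step 3, specifically justifying the Mellin identity for $\mathcal T_\mu g$ and controlling its growth near $0$ and $\infty$ so that the two-sided argument is legitimate. I would first verify it on the strip $-1<\operatorname{Re}s<0$ using the integral representation of $\mathcal T_\mu$, and then extend by analytic continuation.
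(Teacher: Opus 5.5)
Your overall strategy is the paper's. You dilate $g$, use Theorem~\ref{thm:holo-voronoi} to get rapid decay of the dilated sum at both ends so its Mellin transform is entire, divide by $\widetilde{g}$, and isolate one form through an invertible matrix $(a_{f_i}(\ell_j))$. You isolate before taking Mellin transforms, while the paper does it afterwards on the averages $A_\ell(s)$; that difference is immaterial.

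However, Step~3 as written derives the wrong functional equation and then covers it with a false equivalence. Since $\widetilde{\mathcal{T}_\mu g_y}(w)=y^{1-w}\widetilde{g}(1-w)\mathcal{G}_\mu(w)$, for $c>1$
\[
\Psi(y)=\frac{1}{2\pi i}\int_{(c)}y^{1-w}\widetilde{g}(1-w)\mathcal{G}_\mu(w)L(w,\operatorname{Sym}^2 f)\,dw
=\frac{1}{2\pi i}\int_{(1-c)}y^{s}\widetilde{g}(s)\mathcal{G}_\mu(1-s)L(1-s,\operatorname{Sym}^2 f)\,ds,
\]
while $\Phi(y)=\frac{1}{2\pi i}\int_{(c)}y^{s}\widetilde{g}(s)L(s,\operatorname{Sym}^2 f)\,ds$. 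So in the same convention in which $\Phi$ corresponds to $\widetilde{g}(s)L(s,\operatorname{Sym}^2 f)$, $\Psi$ corresponds to $\widetilde{g}(s)\mathcal{G}_\mu(1-s)L(1-s,\operatorname{Sym}^2 f)$. It does not correspond to $\mathcal{G}_\mu(s)\widetilde{g}(s)L(1-s,\operatorname{Sym}^2 f)$. With this correction your argument gives exactly $L(s,\operatorname{Sym}^2 f)=\mathcal{G}_\mu(1-s)L(1-s,\operatorname{Sym}^2 f)$, and no conversion is needed.

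The conversion you propose is invalid. The relation $\mathcal{G}_\mu(s)\mathcal{G}_\mu(1-s)=1$ turns $L(s)=\mathcal{G}_\mu(s)L(1-s)$ only into $L(1-s)=\mathcal{G}_\mu(1-s)L(s)$, which is the same relation again. The two forms are genuinely different: together they would force $\mathcal{G}_\mu(s)^2=1$ wherever $L(1-s,\operatorname{Sym}^2 f)\neq 0$. The paper's choice $F_g(x)=\sum_n A_f(1,n)g(nx)$, whose ordinary Mellin transform is $\widetilde{g}(s)A_\ell(s)$, avoids this sign bookkeeping. Also, the Mellin identity for $\mathcal{T}_\mu g$ that you list as the main obstacle is not one. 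It holds on $\operatorname{Re}w>0$ by definition (Lemma~\ref{lem:Tmug-props}), which covers the line $\operatorname{Re}w=c>1$ used above.

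Step~4 has a related problem: neither factor-by-factor identity you assert is true. From $\Gamma_{\mathbb{R}}(u)\Gamma_{\mathbb{R}}(2-u)=1/\sin(\pi u/2)$ and $\Gamma_{\mathbb{R}}(1+u)\Gamma_{\mathbb{R}}(1-u)=1/\cos(\pi u/2)$ one gets
\[
G_0(s+2k-1)G_0(s-2k+1)=\tan\Bigl(\frac{\pi s}{2}\Bigr)\frac{\Gamma_{\mathbb{C}}(s+2k-1)}{\Gamma_{\mathbb{C}}(2k-s)},\qquad
G_0(s)=\cot\Bigl(\frac{\pi s}{2}\Bigr)\frac{\Gamma_{\mathbb{R}}(s+1)}{\Gamma_{\mathbb{R}}(2-s)}.
\]
The parity shift you flagged is resolved only in the product, where $\tan$ and $\cot$ cancel. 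This gives $\mathcal{G}_\mu(s)=L_\infty(s)/L_\infty(1-s)$ with $L_\infty(s)=\Gamma_{\mathbb{C}}(s+2k-1)\Gamma_{\mathbb{R}}(s+1)$. Combined with the corrected Step~3, this yields $\Lambda(s,\operatorname{Sym}^2 f)=\Lambda(1-s,\operatorname{Sym}^2 f)$; combined with your uncorrected version it would not.
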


\begin{theorem}\label{thm:holo-entire}
For every $f\in B_{2k}$ the function $L(s,\operatorname{Sym}^2 f)$ is entire. 
\end{theorem}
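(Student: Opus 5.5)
We need to prove that $L(s,\operatorname{Sym}^2 f)$ is entire, given the meromorphic continuation and functional equation of Theorem~\ref{thm:holo-fe}. The approach is to extract the continuation directly from the averaged Voronoi identity of Theorem~\ref{thm:holo-voronoi}, applied to test functions of Mellin type $g(x)=x^{-s}\phi(x)$, or better, Mellin-localized $g$. Only two kinds of pole are possible. The first is at $s=1$, coming from $\zeta(2s)$ at $s=1/2$ and its mirror. The second comes from poles of the gamma ratio $\mathcal{G}_\mu(1-s)$. We will show both are cancelled.

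\textbf{Step 1: isolating a single form.} Since $B_{2k}$ is finite, choose integers $\ell_1,\dots,\ell_r$ and coefficients $c_j$ so that
\[
\sum_j c_j\, a_{f'}(\ell_j)=\delta_{f,f'}\cdot\frac{(4\pi)^{2k-1}\|f\|^2}{\Gamma(2k-1)} .
\]
This is possible because the Hecke eigenvalue vectors of distinct eigenforms are linearly independent by multiplicity one. Combining the identities $I_{\ell_j}(g)=I_{\ell_j}(\mathcal T_\mu g)$ with these coefficients gives the individual Voronoi formula
\[
\sum_n A_f(1,n)g(n)=\sum_n A_f(1,n)(\mathcal T_\mu g)(n).
\]

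\textbf{Step 2: continuation and absence of poles on the geometric side.} Take $g\in C_c^\infty(0,\infty)$. Then $\tilde g(s)=\int g(x)x^{s-1}\,dx$ is entire and rapidly decaying in vertical strips. By Mellin inversion, $\sum_n A_f(1,n)g(n)=\frac1{2\pi i}\int_{(2)}\tilde g(s)L(s,\operatorname{Sym}^2f)\,ds$. By the definition of $\mathcal T_\mu$ in \eqref{equ:defTmu}, $\widetilde{\mathcal T_\mu g}(s)=\mathcal G_\mu(s)\tilde g(1-s)$ up to the paper's normalization. The standard argument uses the Voronoi identity for all such $g$, so that the left side is an entire function of $\tilde g$. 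The key point is that $\mathcal T_\mu g$ is smooth on $(0,\infty)$ and decays rapidly at infinity. Near $0$ it behaves like $x^{-\alpha}$ times a power series only at exponents dictated by the poles of $\mathcal G_\mu(s)\tilde g(1-s)$ with $\operatorname{Re}s\le 0$. Hence $\sum_n A_f(1,n)(\mathcal T_\mu g)(n)$ converges absolutely for every compactly supported $g$; no point mass at $0$ is involved, since $n\ge1$.

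Now replace $g$ by $g_s(x)=g(x)x^{-s}$ and view everything as an analytic function of $s$. The left side $\sum_n A_f(1,n)g(n)n^{-s}$ is entire in $s$ for fixed compactly supported $g$, because the sum is finite. This alone is trivial; what the argument must supply is the identity for $\tilde g$ concentrating near a given $s_0$. The cleaner route is to write $L(s,\operatorname{Sym}^2f)=\sum_n A_f(1,n)n^{-s}V(n)+\sum_n A_f(1,n)n^{-s}(1-V(n))$ with $V$ smooth, $V=1$ near $0$ and $V=0$ for large $x$. The second piece is entire. The first is handled by the Voronoi identity applied to $x^{-s}V(x)$, where $V$ is not compactly supported at $0$. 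This requires extending Theorem~\ref{thm:holo-voronoi} by density to functions that are smooth of polynomial growth at $0$, for $\operatorname{Re}s$ large. The right side is then an absolutely convergent sum $\sum_n A_f(1,n)\,\mathcal T_\mu(x^{-s}V)(n)$, and it is holomorphic wherever the Mellin data $\mathcal G_\mu(1-w)\tilde V(w-s)$ produce no residue terms. Shifting contours in the definition of $\mathcal T_\mu$ picks up residues only at poles of $\tilde V(w-s)$, namely $w=s$. Those residues give exactly $\mathcal G_\mu(1-s)$ times the dual sum, which reproduces the functional equation and no extra terms.

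\textbf{Main obstacle.} The hard part is to show that neither the pole of $\zeta(2s)$ at $s=1/2$ nor the poles of $\mathcal G_\mu(1-s)$ survive. This must come from the geometric side: the identity \eqref{equ:holo-voronoi} has no diagonal or degenerate term, which is precisely the cancellation carried out in the Poisson and Weber--Schafheitlin analysis. My first attempt would be to check directly that the function $\mathcal G_\mu(1-s)L(1-s,\operatorname{Sym}^2f)$ is regular at the poles of the gamma factors, namely $s=1-2k+1-2j$ type points and $s=1+2j$ from $G_0$. Poles of $\Gamma(s)$ at $s\le 0$ with $\cos(\pi s/2)\ne0$ are cancelled by trivial zeros of $L(1-s)$ forced by the symmetric representation $\Lambda$. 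Since $\Lambda(s)=\Gamma_{\mathbb C}(s+2k-1)\Gamma_{\mathbb R}(s+1)L(s)$ is symmetric and $L(s)$ is holomorphic for $\operatorname{Re}s>1/2$ apart from a possible pole at $s=1$, which the geometric identity rules out because no $x^{-1}$-term appears, the only remaining candidates lie in the strip $0\le\operatorname{Re}s\le1$. There the integral representation from Step 2 is holomorphic. So the key check is the absence of a constant residual term on the geometric side, which I expect to be the delicate point.
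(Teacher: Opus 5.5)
There is a genuine gap: the step that gives holomorphy in the strip $0\le\operatorname{Re}s\le1$ is never carried out.

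Your splitting in Step 2 is backwards. If $V=1$ near $0$ and $V=0$ for large $x$, then $\sum_n A_f(1,n)n^{-s}V(n)$ is a \emph{finite} sum, hence trivially entire. Meanwhile $\sum_n A_f(1,n)n^{-s}(1-V(n))$ is the tail of the Dirichlet series, which is known to converge only for $\operatorname{Re}s>1$. So ``the second piece is entire'' is false. Applying the Voronoi identity to $x^{-s}V(x)$ therefore accomplishes nothing, and it would in any case need an unproved extension of Theorem~\ref{thm:holo-voronoi} beyond $\mathcal{S}$.

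The fallback in your last paragraph does not close the gap either. The claims that only two kinds of pole can occur, and that $L(s,\operatorname{Sym}^2 f)$ is holomorphic on $\operatorname{Re}s>1/2$ apart from $s=1$, are unjustified, since absolute convergence gives only $\operatorname{Re}s>1$. The functional equation does handle $\operatorname{Re}s<0$ directly: there $\mathcal{G}_\mu(1-s)$ is holomorphic and $L(1-s)$ converges absolutely. But the whole critical strip, not just $s=1$, is handed to ``the integral representation from Step 2'', which was never established. The decisive ``absence of a residual term'' is left as an expectation.

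The missing idea is to turn that absence into a statement about dilates, which is the paper's argument. For $g\in\mathcal{S}$ set $F_g(x)=\sum_n A_f(1,n)g(nx)$. The paper uses the $\ell$-averaged version and separates the forms by linear algebra at the end, as in your Step 1, which is fine. Then $\widetilde{F_g}(s)=\widetilde{g}(s)L(s,\operatorname{Sym}^2 f)$ for $\operatorname{Re}s>1$.

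Now apply the Voronoi identity to $g_x(y)=g(xy)$. Using $\mathcal{T}_\mu g_x(y)=x^{-1}(\mathcal{T}_\mu g)(y/x)$, this gives $F_g(x)=x^{-1}\sum_n A_f(1,n)(\mathcal{T}_\mu g)(n/x)$. The rapid decay of $\mathcal{T}_\mu g$ at infinity (Lemma~\ref{lem:Tmug-props}), which you noticed but used only for convergence, then yields $F_g(x)\ll_A x^{A}$ as $x\to0^+$. Since $F_g$ also decays rapidly as $x\to\infty$, $\widetilde{F_g}$ is entire.

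For any $s_0$, choose $g$ (say a bump concentrated near $x=1$) with $\widetilde{g}(s_0)\neq0$. Then $L(s,\operatorname{Sym}^2 f)=\widetilde{F_g}(s)/\widetilde{g}(s)$ is holomorphic at $s_0$. A pole at $s_0$ would force an $x^{-s_0}$ term in $F_g$ near $0$. This is the rigorous version of your ``no $x^{-1}$-term'' heuristic, and it treats every point at once, with no appeal to the functional equation or to trivial zeros.
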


\subsubsection{Even Hecke–Maass cusp forms}

Let $\mathcal{B}$ be an orthonormal basis of even Hecke–Maass cusp forms of level $1$ and trivial central character. For $u\in\mathcal{B}$ with spectral parameter $t_u$ (so that $\Delta u=(\frac14+t_u^2)u$) write the Fourier expansion as
\[
u(z)=\sqrt{y}\sum_{n\neq0}\rho_u(n)K_{it_u}(2\pi|n|y)e(nx),\qquad
\rho_u(n)=\rho_u(1)\lambda_u(n),\ \lambda_u(1)=1,
\]
and set $\omega_u=4\pi|\rho_u(1)|^2/\cosh(\pi t_u)$. The symmetric-square coefficients are
\[
A_u(1,n)=A_u(n,1)=\sum_{d^2\mid n}\lambda_u\bigl((n/d^2)^2\bigr).
\]
For the continuous spectrum we likewise define $\tau_t(n)=\sum_{ab=n}(a/b)^{it}$ and
\[
A_t(1,n)=A_t(n,1)=\sum_{d^2\mid n}\tau_t\bigl((n/d^2)^2\bigr).
\]

Let $h$ be an even function of complex variable. It is called \emph{Kuznetsov-admissible} if it is holomorphic in $|\mathrm{Im}(t)|<1+\varepsilon$ for soem $\varepsilon>0$ and satisfies $h(s)\ll |s|^{-A}$ for every $A>0$ as $|s|\to\infty$. In our paper, we also acquire $h(\pm i/2)=0$. 

Theorem \ref{thm:maass-voronoi} gives the following Maass‑form analogue of Theorem \ref{thm:holo-voronoi}.

\begin{theorem}\label{thm:maass-voronoi} Let $h$ be a Kuznetsov-admissible function such that $h(\pm i/2)=0$. Let $g\in \mathcal{S}(0,\infty)$. For $\ell\geqslant 1$, define
\begin{equation*}
\begin{split}
I_{\ell}(g;h):=&\sum_{u\in\mathcal{B}}\omega_u h(t_u)\lambda_u(\ell)\sum_{n=1}^\infty A_u(1,n)g(n)
   +\frac1{\pi}\int_{-\infty}^{\infty}\frac{h(t)\tau_t(\ell)}{|\zeta(1+2it)|^2}
   \sum_{n=1}^\infty A_t(1,n)g(n)dt, \\
I_{\ell}^{\vee}(g;h):=&\sum_{u\in\mathcal{B}}\omega_u h(t_u)\lambda_u(\ell)\sum_{n=1}^\infty
   A_u(n,1)\mathcal{T}_{\mu_u}(n)+\frac1{\pi}\int_{-\infty}^{\infty}\frac{h(t)\tau_t(\ell)}{|\zeta(1+2it)|^2}
   \sum_{n=1}^\infty A_t(n,1)\mathcal{T}_{\mu_t}(n)dt,
\end{split}
\end{equation*}
where $\mu_t=(2it,0,-2it)$ with $\mathcal{T}_{\mu_t}$ defined by \ref{def:T-mu-t}. Then
\begin{equation}\label{equ:maass-voronoi}
I_{\ell}(g;h)-I_{\ell}^{\vee}(g;h)=\frac1{\pi}\int_\R h(t)\tau_t(\ell)
\Bigg\{\widetilde g(1-2it)\frac{\zeta(1-4it)}{\zeta(1+2it)}
+\widetilde g(1)
+\widetilde g(1+2it)\frac{\zeta(1+4it)}{\zeta(1-2it)}
\Bigg\}dt.
\end{equation}
\end{theorem}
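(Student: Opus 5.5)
We will establish the identity by running the Kuznetsov formula on $I_\ell(g;h)$, evaluating the geometric side directly, and checking that it reorganizes into $I^\vee_\ell(g;h)$ plus the polar terms. The first step is to unfold the definition $A(1,n)=\sum_{d^2\mid n}\lambda(m^2)$ with $n=d^2m^2$ wait, more precisely $n=d^2r$, $A(1,n)=\sum_{d}\lambda((n/d^2)^2)$ only for $n/d^2$ giving squares. We write
\[
\sum_n A(1,n)g(n)=\sum_{d\ge1}\sum_{m\ge1}\lambda(m^2)g(d^2m)
\]
and treat $d$ as an outer parameter with test function $g_d(x)=g(d^2x)$. Then we apply the Kuznetsov formula with $\lambda_u(\ell)\lambda_u(m^2)$ (and the Eisenstein analogue with $\tau_t$) for each $m$. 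This produces:
\begin{itemize}
\item a diagonal term $\delta_{\ell=m^2}$ weighted by $\int h(t)\tanh(\pi t)t\,dt$;
\item a Kloosterman term $\sum_c c^{-1}S(\ell,m^2;c)\,H^+(4\pi m\sqrt\ell/c)$, where $H^+$ is the $J$-Bessel transform of $h$.
\end{itemize}
The assumption $h(\pm i/2)=0$ is used to shift contours and to kill boundary terms later.

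The main step is to apply Poisson summation in the quadratic variable $m$ modulo $c$. We expand
\[
S(\ell,m^2;c)=\sum_{x\bmod c}{}^{*}e\!\left(\frac{\ell\bar x+m^2x}{c}\right)
\]
and split $m$ into residue classes mod $c$, so the dual side involves quadratic Gauss-type sums
\[
\sum_{a\bmod c}e\!\left(\frac{xa^2+na}{c}\right).
\]
The archimedean integral is $\int g(d^2y)H^+(4\pi y\sqrt\ell/c)e(-ny/c)\,dy$. We insert the Mellin representation of $H^+$ and of $g$. The resulting $y$-integrals of products of $J$-Bessel functions and exponentials are evaluated by Weber–Schafheitlin type formulas, which turns the kernel into Mellin transforms of gamma-ratios. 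The zero frequency $n=0$ of the Poisson sum gives a term of the form $\widetilde g(\cdot)$ times Dirichlet series in $c$. After reversing the Kuznetsov formula, these produce the three terms on the right side, at $s=1$ and $s=1\pm2it$, with the ratios $\zeta(1\pm4it)/\zeta(1\mp2it)$. These are exactly the residues of $\zeta(2s)L(s,\cdot)$ for the Eisenstein $\mathrm{Sym}^2$, namely $\zeta(s)\zeta(s+2it)\zeta(s-2it)$ weighted by $1/|\zeta(1+2it)|^2$.

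For the nonzero frequencies, we evaluate the Gauss sums explicitly. Combining them with the $x$-sum and the $d$-sum should produce a Salié/Kloosterman sum structure of the form $S(\ell,n^2;c')$ with dual arguments. The archimedean factor should become the kernel defining $\mathcal T_{\mu}$, since its Mellin transform is $\mathcal G_\mu$, the gamma ratio $G_0(s+2it)G_0(s)G_0(s-2it)$. Reading the Kuznetsov formula backwards then yields $I^\vee_\ell(g;h)$.

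The main obstacle is this bookkeeping. The parity and $2$-adic cases of the Gauss sums, the $d^2$ factors, and the dual $d$-sum must recombine exactly into $A(n,1)$. In addition, the archimedean transform must match $\mathcal T_{\mu_t}$ uniformly in the spectral parameter. We would check the latter first by comparing Mellin transforms, and use absolute convergence, via the rapid decay of $h$ and $g$, to justify all interchanges.
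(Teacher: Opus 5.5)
There is a genuine gap at the point where you expect to ``read the Kuznetsov formula backwards.'' Your first steps match the paper: unfolding $n=d^2m$, Kuznetsov for $(\ell,m^2)$, opening $S(\ell,m^2;c)$, Poisson in $m$ modulo $c$, and Weber--Schafheitlin for the $J\cdot\cos$ integrals. But after Poisson no Kloosterman or Sali\'e sum $S(\ell,n^2;c')$ appears. The substitution $u=ax$ collapses the character sum,
\[
\sideset{}{^*}\sum_{x\bmod q}e\Big(\frac{\ell\bar x}{q}\Big)\sum_{a\bmod q}e\Big(\frac{xa^2+an}{q}\Big)=\sum_{u\bmod q}c_q(u^2+nu+\ell)=q\sum_{r\mid q}\mu(q/r)\rho_r(n,\ell),
\]
so no Gauss sums or $2$-adic cases are needed. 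Writing $c=rq$, the sums over $r$, $q$, $d$ assemble into Zagier's series $\mathscr{L}_{n^2-4\ell}(1-z)$; the $d$-sum and the M\"obius sum supply its normalizing factor $\zeta(2-2z)/\zeta(1-z)$. The geometric side is therefore $\frac1{4\pi i}\int_{(\sigma)}\widetilde g(1-z)\sum_n\mathscr{L}_{n^2-4\ell}(1-z)\mathcal{J}_{n,\ell,h}(z)\,dz$, indexed by discriminants, and there is nothing to feed back into Kuznetsov.

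Nor does the archimedean factor alone become $\mathcal{T}_{\mu_t}$. The kernel satisfies $\mathcal{J}_{n,\ell,t}(z)=\mathcal{G}_{\mu_t}(z)\Xi_D(z)^{-1}\mathcal{J}_{n,\ell,t}(1-z)$ with $D=n^2-4\ell$. The extra factor $\Xi_D$ is cancelled only by $\mathscr{L}_D(1-s)=\Xi_D(s)\mathscr{L}_D(s)$, and this functional equation of the quadratic Dirichlet series is the missing idea. Also, because $\mathcal{T}_{\mu_t}$ depends on $t$, there is no single dual pair $(g',h)$ to which Kuznetsov applies. The paper instead writes $I^\vee_\ell(g;h)$ as a Mellin integral in $z$, so that for each $z$ the $t$-dependence becomes the weight $h_{1-z}(t)=h(t)\mathcal{G}_{\mu_t}(1-z)$. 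It then runs Kuznetsov and Poisson a second time on that side and matches the two expansions term by term in $n$.

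Your account of the polar terms is also wrong. The frequency $n=0$ has discriminant $-4\ell<0$, so $\mathscr{L}_{-4\ell}$ is entire and that term is just one of the nonsingular terms, matched exactly with its dual. In the paper, $\widetilde g(1)$ comes from the residues at $z=0$ of $\mathscr{L}_{d^2}(1-z)$ at the square discriminants $n^2-4\ell=d^2$, i.e.\ $n=\pm(a+b)$ with $ab=\ell$. There $\mathcal{J}_{a+b,ab,t}(0)=\pi^{-1}[(a/b)^{it}+(b/a)^{it}]$, which assembles into $\tau_t(\ell)$. The terms with $\widetilde g(1\pm2it)$ come from the dual side. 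The weight $h_{1-z}$ has poles at $t=\pm i(1-z)/2$ inside the admissibility strip, so $\mathcal{J}_{h_{1-z}}$ contains the piece $\mathcal{Z}_{\ell,h}(z)J_{1-z}$ and behaves like $y^{1-z}$ as $y\to0^+$. Consequently the $m=0$ boundary term in Poisson survives rather than being killed, which is where these terms originate.

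Finally, for $\ell=m^2$ you must still match the singular frequencies $n=\pm2m$, together with the diagonal term $H_0(h)\Delta_\ell(g)$, against their duals. At these frequencies $\mathscr{L}_0(s)=\zeta(2s-1)$ and $\mathcal{J}_{2m,m^2,h}$ has a pole at $z=\tfrac12$. The matching uses the explicit evaluation of $\mathcal{J}_{\pm2m,m^2,t}$, and your plan does not address it.
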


From this we deduce the functional equation and entireness of the associated symmetric-square $L$-functions.

\begin{theorem}\label{thm:maass-fe}
For $u\in\mathcal{B}$ and $\operatorname{Re}(s)>1$ set
\[
L(s,\operatorname{Sym}^2 u)=\sum_{n=1}^\infty \frac{A_u(1,n)}{n^s}
   =\zeta(2s)\sum_{m=1}^\infty \frac{\lambda_u(m^2)}{m^s}.
\]
Then $L(s,\operatorname{Sym}^2 u)$ extends meromorphically to $\mathbb{C}$ and satisfies
\[
L(s,\operatorname{Sym}^2 u)=\mathcal{G}_{\mu_u}(1-s)L(1-s,\operatorname{Sym}^2 u),
\]
where $\mu_u=(2it_u,0,-2it_u)$ and
\[
\mathcal{G}_{\mu_u}(s)=G_0(s+2it_u)G_0(s)G_0(s-2it_u)=\frac{\Gamma_{\mathbb{R}}(s+2it_u)\Gamma_{\mathbb{R}}(s)\Gamma_{\mathbb{R}}(s-2it_u)}
                            {\Gamma_{\mathbb{R}}(1-s-2it_u)\Gamma_{\mathbb{R}}(1-s)\Gamma_{\mathbb{R}}(1-s+2it_u)}.
\]
Equivalently, with
\[
\Lambda(s,\operatorname{Sym}^2 u)=\Gamma_{\mathbb{R}}(s+2it_u)\Gamma_{\mathbb{R}}(s)\Gamma_{\mathbb{R}}(s-2it_u)L(s,\operatorname{Sym}^2 u)
\]
we have $\Lambda(s,\operatorname{Sym}^2 u)=\Lambda(1-s,\operatorname{Sym}^2 u)$. In particular the root number is $+1$.
\end{theorem}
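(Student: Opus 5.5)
We deduce Theorem~\ref{thm:maass-fe} from the averaged Voronoi identity of Theorem~\ref{thm:maass-voronoi}, following the same pattern as the holomorphic case. First we specialize the test function: take $g(x)=x^{-s}\phi(x)$ type functions, or more cleanly $g$ with prescribed Mellin transform $\widetilde g(w)$. The Mellin transform converts $\sum_n A_u(1,n)g(n)$ into $\frac{1}{2\pi i}\int_{(\sigma)}\widetilde g(w)L(w,\operatorname{Sym}^2u)\,dw$ with $\sigma>1$. The transform $\mathcal T_{\mu_u}$ is built so that its Mellin transform is $\widetilde{\mathcal T_{\mu_u}g}(w)=\mathcal G_{\mu_u}(w)\widetilde g(1-w)$, in analogy with \eqref{equ:def-Gmu}. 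Hence the dual sum becomes $\frac{1}{2\pi i}\int_{(\sigma)}\mathcal G_{\mu_u}(w)\widetilde g(1-w)L(w,\operatorname{Sym}^2u)\,dw$. We use $A_u(n,1)=A_u(1,n)$ here, since the form is self-dual.

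Next we isolate a single $u$ by varying $h$ and $\ell$. Both sides of \eqref{equ:maass-voronoi} are linear functionals in $h$ of the form $\sum_u\omega_u h(t_u)\lambda_u(\ell)(\cdots)+\int h(t)(\cdots)dt$. For each fixed $\ell$, the cuspidal contributions form a discrete sum while the Eisenstein contributions and the right-hand side are absolutely continuous in $t$. Choosing $h$ concentrated near $t_u$ would give the desired separation, but it is not literally possible for holomorphic $h$. Instead we argue by density: the space of admissible $h$ vanishing at $\pm i/2$ is rich enough that equality of the two functionals forces equality of the atomic parts. Concretely, we would use a Paley--Wiener family $h_T(t)=h(t-T)+h(t+T)$ with shrinking support of the Fourier transform, or approximate identities. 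This yields, for each $u$ with $t_u$ of multiplicity handled by further averaging over $\ell$,
\[
\sum_{u':\,t_{u'}=t_u}\omega_{u'}\lambda_{u'}(\ell)\Bigl(\sum_nA_{u'}(1,n)g(n)-\sum_nA_{u'}(n,1)\mathcal T_{\mu_u}g(n)\Bigr)=0 .
\]
The vectors $(\lambda_{u'}(\ell))_\ell$ are linearly independent for distinct Hecke eigenforms by multiplicity one. So the bracket vanishes for each $u$ and every $g\in\mathcal S(0,\infty)$.

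Now we convert back to $L$-functions. The identity gives
\[
\int_{(\sigma)}\widetilde g(w)L(w,\operatorname{Sym}^2u)\,dw=\int_{(\sigma)}\widetilde g(1-w)\mathcal G_{\mu_u}(w)L(w,\operatorname{Sym}^2u)\,dw .
\]
In the right-hand integral we substitute $w\mapsto 1-w$, which moves it to the line $\operatorname{Re}w=1-\sigma<0$. The integrand $\widetilde g(w)\mathcal G_{\mu_u}(1-w)L(1-w,\operatorname{Sym}^2u)$ is holomorphic for $\operatorname{Re}w<0$. Since $\widetilde g$ ranges over a dense class (e.g.\ $g$ compactly supported, so $\widetilde g$ is entire and rapidly decaying), we define the continuation of $L$ as the function equal to $L(w)$ on the right and to $\mathcal G_{\mu_u}(1-w)L(1-w)$ on the left. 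The two one-sided Mellin inversions give the same function $\sum_nA(1,n)g(n)$ for all such $g$. A standard uniqueness argument then gives the continuation and the functional equation: view both as Mellin transforms of the same tempered object, or use a Hecke-type argument with $g$ depending on an extra variable $x$ via $g(n/x)$. The gamma-factor equivalence with $\Lambda$ is then the identity $G_0(s)=\Gamma_\R(s)/\Gamma_\R(1-s)$ applied to the three factors of $\mathcal G_{\mu_u}$.

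The main obstacle is the separation step. We must pass from the averaged identity \eqref{equ:maass-voronoi}, which includes the continuous spectrum and the explicit right-hand side, to an identity for an individual $u$. We would first try to show that the Eisenstein terms and the right-hand side define absolutely continuous measures in the spectral variable, while the cuspidal part is discrete. For the Eisenstein terms this relies on the standard computation $\sum A_t(1,n)n^{-w}=\zeta(w)\zeta(w+2it)\zeta(w-2it)$, which itself satisfies the functional equation, so the Eisenstein pieces essentially cancel against the right-hand side. We would then apply a Stone--Weierstrass/density argument in $h$ to extract each atom.
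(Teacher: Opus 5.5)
Your skeleton is the paper's. The continuous spectrum cancels exactly against the right-hand side of \eqref{equ:maass-voronoi}: the residues of $\widetilde g(s)\zeta(s+2it)\zeta(s)\zeta(s-2it)$ at $s=1,1\pm 2it$, divided by $|\zeta(1+2it)|^2$, are precisely the bracket there (Lemma~\ref{lem:maass-eisenstein-voronoi}, giving Corollary~\ref{cor:maass-voronoi-discrete}). One then isolates the packet $\mathcal B[t_0]=\{u:t_u^2=t_0^2\}$, separates its members by linear independence of $\ell\mapsto\lambda_u(\ell)$, and concludes by the Hecke argument.

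On that last point, your first suggestion does not cover the strip $0\le\operatorname{Re}w\le1$. Declaring the continuation to be $L(w)$ on the right and $\mathcal G_{\mu_u}(1-w)L(1-w)$ on the left says nothing there. What does the work is the dilation argument you mention in passing. The function $F(x)=\sum_nA_u(1,n)g(nx)=x^{-1}\sum_nA_u(n,1)(\mathcal T_{\mu_u}g)(n/x)$ decays rapidly at both ends. Hence $\widetilde F(s)=\widetilde g(s)L(s,\operatorname{Sym}^2u)$ is entire and equals $\widetilde g(s)\mathcal G_{\mu_u}(1-s)L(1-s,\operatorname{Sym}^2u)$ for $\operatorname{Re}s<0$, exactly as in Lemma~\ref{lem:maass-Fg-schwartz} and Corollary~\ref{cor:maass-avg-meromorphic}.

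The genuine gap is the separation step. You flag it but do not carry it out, and the premise that steered you away from a direct argument is false. Holomorphy rules out compactly supported $h$, but not an admissible $h$ vanishing at every other cuspidal parameter, which is all you need.

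The paper builds such an $h$ explicitly (Lemma~\ref{lem:maass-spectral-projector}). By Weyl's law $\{\pm t_u\}$ has exponent of convergence at most $2$. So a Weierstrass product $Q_{t_0}$ with zeros at all $\pm t_u$, $u\notin\mathcal B[t_0]$, has order $<4$. Then $H_{t_0}(t)\propto(t^2+\frac14)^2Q_{t_0}(t)e^{-t^4}$ is even, admissible, zero at $\pm i/2$ and at every other $t_u$, and equal to $1$ on $\mathcal B[t_0]$. Inserting it gives Corollary~\ref{cor:single-average-sum} as a finite identity, with no limiting process.

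Your density sketch does not work as written. Paley--Wiener functions whose Fourier transforms have \emph{shrinking} support spread out in $t$ rather than concentrating. An approximate-identity argument with, say, $e^{-R(t\mp t_0)^2}$, $R\to\infty$, needs dominated convergence. That in turn requires uniform polynomial bounds in $t_u$ for $\omega_u$ and for $\sum_nA_u(n,1)\mathcal T_{\mu_u}g(n)$, together with Weyl's law, and you supply none of these.

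If you want to avoid the Weierstrass product, a clean repair is $h_\xi(t)=(t^2+\frac14)e^{-t^2}\cos(\xi t)$, which is admissible for every real $\xi$. Put $a_u=\lambda_u(\ell)\bigl(\sum_nA_u(1,n)g(n)-\sum_nA_u(n,1)\mathcal T_{\mu_u}g(n)\bigr)$. The discrete identity then says $\sum_u\omega_u(t_u^2+\frac14)e^{-t_u^2}a_u\cos(\xi t_u)=0$ for all $\xi$, where all $t_u$ are real at level one. Grant absolute convergence of this spectral sum at $\xi=0$. Then uniqueness of Fourier--Stieltjes transforms of finite measures forces $\sum_{u\in\mathcal B[t_0]}\omega_ua_u=0$ for every $t_0$, after which your linear-independence step goes through.
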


\begin{theorem}\label{thm:maass-entire}
For every $u\in\mathcal{B}$, the function $L(s,\operatorname{Sym}^2 u)$ is entire.
\end{theorem}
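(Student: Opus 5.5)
We derive Theorem~\ref{thm:maass-entire} from the Voronoi-type identity of Theorem~\ref{thm:maass-voronoi} and the functional equation of Theorem~\ref{thm:maass-fe}. The idea is to insert Mellin-type test functions and read off the analytic behaviour of $L(s,\operatorname{Sym}^2 u)$ from the spectral side. For $\operatorname{Re}(s)$ large we take $g(x)=x^{-s}\phi(x)$, or more conveniently a one-parameter family $g_s$ whose Mellin transform is $\widetilde g_s(w)=\widetilde\phi(w-s)$. Then $\sum_n A_u(1,n)g_s(n)$ is a Mellin integral of $L(w,\operatorname{Sym}^2 u)$ against $\widetilde\phi(w-s)$. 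We shift contours to the left. By Theorem~\ref{thm:maass-fe} the dual side $\sum_n A_u(n,1)\mathcal T_{\mu_u}g_s(n)$ corresponds to $\mathcal G_{\mu_u}(1-w)L(1-w,\operatorname{Sym}^2 u)$, so the left-hand side of \eqref{equ:maass-voronoi} equals, for each $u$, the sum of the residues of $L(w,\operatorname{Sym}^2 u)\widetilde\phi(w-s)$ crossed during the shift. The right-hand side of \eqref{equ:maass-voronoi} contains no cuspidal contribution: it involves only $\widetilde g$ at the points $1,1\pm 2it$, weighted by $h(t)\tau_t(\ell)$ over the continuous spectrum. Those terms exactly account for the poles of the Eisenstein symmetric square $\zeta(s)\zeta(s+2it)\zeta(s-2it)$, whose residues at $w=1,1\pm2it$ are $\widetilde g(1)\,\zeta(1+2it)\zeta(1-2it)$ and analogues. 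After dividing by $|\zeta(1+2it)|^2$, these are precisely the displayed terms.

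The key steps are the following.
\begin{enumerate}
\item Compute the Eisenstein contribution $E(g)$ to $I_\ell(g;h)-I_\ell^\vee(g;h)$ explicitly via the residues above, and check that it equals the right-hand side of \eqref{equ:maass-voronoi}. This yields the cuspidal identity
\[
\sum_{u\in\mathcal B}\omega_u h(t_u)\lambda_u(\ell)\,R_u(g)=0,
\]
where $R_u(g)=\sum_{\rho}\operatorname{Res}_{w=\rho}L(w,\operatorname{Sym}^2 u)\,\widetilde g(w)$ is the total residue contribution of possible poles of $L(\cdot,\operatorname{Sym}^2 u)$.
\item Apply Theorem~\ref{thm:maass-fe}, which gives meromorphy and the functional equation, together with convexity-type growth in vertical strips, obtained for instance by Phragm\'en--Lindel\"of from absolute convergence for $\operatorname{Re} s>1$. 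Since the gamma ratio $\mathcal G_{\mu_u}$ contributes no poles in the critical strip beyond those cancelled by $\Lambda$, any pole of $L(s,\operatorname{Sym}^2u)$ lies in a bounded region. In fact it lies in the strip $0\le\operatorname{Re}s\le1$, because $\Lambda$ is symmetric and $\Gamma$ has no zeros.
\item Separate the individual forms $u$ using the freedom in $\ell$ and $h$. Multiplicity one for the Hecke eigenvalues $\lambda_u(\ell)$, combined with the arbitrariness of $\ell$, lets us take linear combinations over $\ell$ (e.g.\ Hecke operators, or a finite linear algebra argument on finitely many $u$ with $t_u$ in a window) to isolate a single $u_0$. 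The function $h$ can be chosen localized near $t_{u_0}$ and nonvanishing there, subject to the required vanishing at $\pm i/2$. This gives $R_{u_0}(g)=0$ for all $g\in\mathcal S(0,\infty)$.
\item Since $\widetilde g$ ranges over a family whose values and derivatives at finitely many points can be prescribed arbitrarily, all principal parts vanish. Hence $L(s,\operatorname{Sym}^2u_0)$ has no poles and is entire.
\end{enumerate}

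The main obstacle is the separation in step 3, because the spectral sum is infinite. We would handle it by a standard density argument. Fix $u_0$ and consider the distribution $\sum_u\omega_u\lambda_u(\ell)R_u(g)\delta_{t_u}$, which the identity says is annihilated by every admissible $h$. Multiplying $h$ by $(t^2+1/4)$ removes the constraint $h(\pm i/2)=0$, and admissible $h$ are dense enough (e.g.\ Gaussians times polynomials) to force each coefficient $\sum_{t_u=T}\omega_u\lambda_u(\ell)R_u(g)$ to vanish. This requires absolute convergence in $u$, which follows from Weyl's law and the polynomial growth of $R_u(g)$ in $t_u$. Finally, the finitely many $u$ with $t_u=T$ are separated by varying $\ell$, since distinct Hecke eigenforms have linearly independent eigenvalue sequences.
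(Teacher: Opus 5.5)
\textbf{Gap: the residue identity in Steps 1, 2 and 4 is not justified.} Your route differs from the paper's, and it fails at the step that turns the Voronoi identity into a statement about residues. To write $\sum_n A_u(n)g(n)-\sum_n A_u(n)(\mathcal{T}_{\mu_u}g)(n)$ as a residue sum $R_u(g)$ of $\widetilde g(w)L(w,\operatorname{Sym}^2u)$, you must shift the contour from $\operatorname{Re}w=c>1$ to $\operatorname{Re}w=1-c$. That needs an a priori growth bound for $L(w,\operatorname{Sym}^2u)$ inside $0\le\operatorname{Re}w\le1$, and control of its poles there.

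Theorem \ref{thm:maass-fe} gives only meromorphic continuation and the functional equation. These move information from $\operatorname{Re}w>1$ to $\operatorname{Re}w<0$, but say nothing about the inside of the strip. Phragm\'en--Lindel\"of cannot supply the missing bound. It needs a function holomorphic in the strip with some a priori bound of finite order there, and you have neither. In particular, you cannot clear the poles with a polynomial without knowing there are finitely many.

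Likewise, your argument only places the poles in the closed strip $0\le\operatorname{Re}s\le1$, which is not a bounded region. So Step 4, which prescribes Taylor data of $\widetilde g$ at ``finitely many points'', has no footing. The Eisenstein part is different: there the same contour shift is legitimate, because $\zeta(s+2it)\zeta(s)\zeta(s-2it)$ has known growth and three explicit poles.

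\textbf{The fix: drop the residue interpretation and use a Mellin argument.} Define $R_u(g)$ as the difference $\sum_nA_u(n)g(n)-\sum_nA_u(n)(\mathcal{T}_{\mu_u}g)(n)$ itself. Theorem \ref{thm:maass-voronoi} minus Lemma \ref{lem:maass-eisenstein-voronoi} gives $\sum_u\omega_uh(t_u)\lambda_u(\ell)R_u(g)=0$. This $R_u(g)$ is polynomially bounded in $t_u$ directly from its definition. Your moment/density separation and linear independence in $\ell$ then give
$\sum_nA_{u_0}(n)g(n)=\sum_nA_{u_0}(n)(\mathcal{T}_{\mu_{u_0}}g)(n)$ for all $g\in\mathcal{S}$.

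Now apply this to the dilates $g_x(y)=g(xy)$. Since $\mathcal{T}_{\mu}g_x(n)=x^{-1}(\mathcal{T}_{\mu}g)(n/x)$ and $\mathcal{T}_{\mu}g$ decays rapidly at infinity, $F(x)=\sum_nA_{u_0}(n)g(nx)$ lies in $\mathcal{S}$. Hence $\widetilde F(s)=\widetilde g(s)L(s,\operatorname{Sym}^2u_0)$ is entire. Choosing $g$ with $\widetilde g(s_0)\neq0$ gives holomorphy at every $s_0$, with no growth estimate and no counting of poles.

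This is the paper's argument (Lemma \ref{lem:maass-Fg-schwartz}, Corollary \ref{cor:maass-avg-meromorphic}). The paper separates the spectrum with the explicit entire projector $H_{t_0}$ of Lemma \ref{lem:maass-spectral-projector} rather than your density argument; yours is a legitimate alternative. In the paper the functional equation comes out of the same Mellin computation, so Theorem \ref{thm:maass-fe} need not be assumed as an input.
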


\subsection{Outline of the method}

We prove averaged Voronoi identities for symmetric-square coefficients via the following path. (For simplicity, we only outline the holomorphic case.)

\begin{enumerate}
    \item \textbf{Trace formula expansion.}
    Express $I_\ell(g)$ using the Petersson trace formula (holomorphic case) or the Kuznetsov trace formula (Maass case), decomposing it into a diagonal term $\Delta_\ell(g)$ and a Kloosterman sum term $\mathcal{G}_\ell(g)$.

    \item \textbf{Poisson summation.}
    Apply Poisson summation to $\mathcal{G}_\ell(g)$. This yields
    \[
        \mathcal{G}_\ell(g)=\frac{\pi i^{-2k}}{2\pi i}\int_{(\sigma)}
        \widetilde{g}(1-z)
        \sum_{n\in\mathbb{Z}}
        \mathscr{L}_{n^2-4\ell}(1-z)\,\mathcal{J}_{n,\ell}(z)\,dz.
    \]

    \item \textbf{Treat $I_{\ell}(\mathcal{T}_{\mu}g)$ similarly.}
    Repeating the first two steps for $I_{\ell}(\mathcal{T}_{\mu}g)$ gives
    $I_{\ell}(\mathcal{T}_{\mu}g)=\Delta_\ell(\mathcal{T}_{\mu}g)+\mathcal{G}_\ell(\mathcal{T}_{\mu}g)$ with
    \[
        \mathcal{G}_\ell(\mathcal{T}_{\mu}g)=\frac{\pi i^{-2k}}{2\pi i}\int_{(\sigma)}
        \widetilde{\mathcal{T}_{\mu}g}(1-z)
        \sum_{n\in\mathbb{Z}}
        \mathscr{L}_{n^2-4\ell}(1-z)\,\mathcal{J}_{n,\ell}(z)\,dz.
    \]

    \item \textbf{Archimedean reciprocity.}
    For all $n$ with $n^2 \neq 4\ell$, the contributions to $\mathcal{G}_\ell(g)$ and $\mathcal{G}_\ell(\mathcal{T}_{\mu}g)$ coincide. This follows from a functional equation relating $\mathscr{L}_{n^2-4\ell}(1-z)\mathcal{J}_{n,\ell}(z)$ to its counterpart with $z$ replaced by $1-z$, combined with the definition of $\mathcal{T}_\mu$.

    \item \textbf{Singular and diagonal corrections.}
    When $\ell = m^2$ is a perfect square, the remaining terms satisfy
    \[
        \sum_{n=\pm 2m}\bigl(\text{contribution from }\mathcal{G}_\ell(g)\bigr) \;+\; \Delta_\ell(g)
        \;=\;
        \sum_{n=\pm 2m}\bigl(\text{contribution from }\mathcal{G}_\ell(\mathcal{T}_{\mu}g)\bigr) \;+\; \Delta_\ell(\mathcal{T}_{\mu}g).
    \]
    This equality is verified by a direct residue calculation using the explicit forms of $\mathscr{L}_0(s)=\zeta(2s-1)$ and $\mathcal{J}_{\pm2m,m^2}(z)$.
\end{enumerate}

The Maass case follows the same plan, with the Kuznetsov trace formula replacing the Petersson trace formula, the kernel $\mathbb{J}_t$ replacing $J_{2k-1}$, and the spectral-parameter-dependent factor $\mathcal{G}_{\mu_t}$ in place of $\mathcal{G}_\mu$. The continuous spectrum contributes an extra explicit term in Theorem \ref{thm:maass-voronoi}.

\subsection*{Acknowledgements}
\addtocontents{toc}{\protect\setcounter{tocdepth}{1}}
T. Deng is supported by Beijing Natural Science Foundation, No. 1244042 and National Natural Science Foundation of
China, No. 12401013.

\section{Preliminaries}

This section collects the necessary background for the proof, including special functions (Gamma, Bessel, hypergeometric, Weber–Schafheitlin integrals), trace formulas (Poisson summation, Petersson, Kuznetsov), arithmetic sums (Kloosterman sums, Ramanujan sums), and the quadratic Dirichlet series $\mathscr{L}_D(s)$ with its meromorphic continuation and functional equation.

\subsection{Special functions}
\subsubsection{Gamma function}
The Gamma function $\Gamma(s)$ is defined by
\[
    \Gamma(s)=\int_0^{\infty} e^{-x}x^{s-1}dx,\quad\mathrm{Re}(s)>0.
\]
A partial integration gives the recursion formula $\Gamma(s+1)=s\Gamma(s)$. This formula yields a meromorphic extension of $\Gamma(s)$ to $\mathbb{C}$, with simple poles at non-positive integers $0,-1,-2,\dots$.

We need the following properties of $\Gamma(s)$.

\begin{itemize}
    \item \emph{Functional equation.}
    \[
    \Gamma(s)\Gamma(1-s)=\frac{\pi s}{\sin(\pi s)}.    
    \]
    \item \emph{Legendre duplication formula.}
    \[
    \Gamma(s)=\pi^{-\frac12}2^{s-1}\Gamma\big(\frac{s}{2}\big)\Gamma\big(\frac{1+s}{2}\big).
    \]
    \item \emph{Stirling's formula.}\quad In the angle $|\mathrm{arg}(s)|<\pi-\epsilon$, we have the estimate
    \[
        \Gamma(s)=\left(\frac{2\pi}{s}\right)^{1/2}\left(\frac{s}{e}\right)^s(1+O_{\epsilon}(|s|^{-1})).
    \]
    A more useful version is that, let $a<b$ be two real numbers, then in the vertical strip $a\leqslant\sigma=\mathrm{Re}(s)\leqslant b$, and $t=\mathrm{Im}(s)>0$, one has
    \[
        \Gamma(\sigma+it)=(2\pi)^{1/2}t^{\sigma-1/2}e^{-\pi t/2}\left(\frac{t}{e}\right)^{it}(1+O_{a,b}(t^{-1}))\quad\text{as}~~t\to\infty.
    \]
\end{itemize}
We shall repeatedly use Euler's reflection formula, Legendre's duplication formula and Stirling's formula without further reference.

\subsubsection{Bessel functions}
For $\nu\in\mathbb{C}$ and $x>0$, the \emph{Bessel function of the first kind} is defined by the series
\[
J_{\nu}(x)=\sum_{m=0}^{\infty}\frac{(-1)^m}{m!\Gamma(m+\nu+1)}\Bigl(\frac{x}{2}\Bigr)^{2m+\nu}.
\]
When $x\to 0^+$, we have $J_{\nu}^{(j)}(x)\ll_{\nu} x^{\nu-j}$ if $\nu\notin\mathbb{Z}_{\geqslant 0}$ and $J_{\nu}^{(j)}\ll_{\nu} x^{\max(\nu-j, 0)}$ if $\nu\in\mathbb{Z}_{\geqslant 0}$. While as $x\to+\infty$, the Bessel function has the following asymptotic expansion 
\[
J_{\nu}(x)=\sqrt{\frac{2}{\pi x}}\Bigl(\cos\bigl(x-\tfrac{\nu\pi}{2}-\tfrac{\pi}{4}\bigr)+O\bigl(\tfrac{1+|\nu|^2}{x}\bigr)\Bigr).
\]

If we fix $x>0$, then $J_{\nu}(x)$ is an entire function of the parameter $\nu$. Moreover, if $\mathrm{Re}(\nu)$ is a half integer, then
\begin{equation*}
   J_{\nu}(x)\ll_x \frac{1}{|\Gamma(\nu+1)|}\left(\frac{x}{2}\right)^{\mathrm{Re}(\nu)}.
\end{equation*}

For our purposes we need the following special case when $\nu=-1/2$:
\[
    J_{-1/2}(x)=\sqrt{\frac{2}{\pi x}}\cos(x).
\]
We also need the modified Bessel combination that appears in the Kuznetsov formula:
\begin{equation}\label{equ:mathbbJ}
\mathbb{J}_t(x)=\frac{i}{\pi}\frac{t}{\cosh(\pi t)}\bigl(J_{2it}(x)-J_{-2it}(x)\bigr),\qquad t\in\mathbb{R}.
\end{equation}
This function is even in $t$ and real for $x>0$.

For $-\mathrm{Re}(\nu)<\mathrm{Re}(z)<\frac{1}{2}$, we have
\begin{equation}\label{equ:mellin-Bessel}
    \int_0^{\infty}x^{z-1}J_{\nu}(x)dx=2^{z-1}\frac{\Gamma\big(\frac{\nu+z}{2}\big)}{\Gamma\big(1+\frac{\nu-z}{2}\big)}.
\end{equation}
Here, the integral converges absolutely when $-\mathrm{Re}(\nu)<\mathrm{Re}(z)<\frac{1}{2}$.

For a detailed treatment for Bessel functions, see \cite{Wat95}.

\subsubsection{Hypergeometric functions}
For $a,b,c\in\mathbb{C}$ with $c\notin\{0,-1,-2,\dots\}$ and $|x|<1$, the Gauss hypergeometric function ${}_2F_1(a,b;c;x)$ is defined by the series
\[
{}_2F_1(a,b;c;x)=\sum_{n=0}^{\infty}\frac{(a)_n(b)_n}{(c)_n n!}x^n,
\]
where $(a)_n=\Gamma(a+n)/\Gamma(a)$ is the Pochhammer symbol. The series converges absolutely for $|x|<1$ and defines a holomorphic function there. 

We use the following standard formulas for Gauss hypergeometric functions (see DLMF \S 15).
\begin{itemize}
    \item The \emph{Euler's transformation} reads as
    \[
     {}_2F_1(a,b;c;x) = (1-x)^{c-a-b}{}_2F_1(c-a,c-b;c;x),\qquad |\arg(1-x)|<\pi.
    \]
    \item The \emph{Euler integral representation} says that
      \begin{equation}\label{equ:Euler-integral-rep}
        {}_2F_1(a,b;c;x)=\frac{\Gamma(c)}{\Gamma(b)\Gamma(c-b)}\int_0^1 t^{b-1}(1-t)^{c-b-1}(1-xt)^{-a}dt,
\end{equation}
holds under the additional conditions $\operatorname{Re}(c)>\operatorname{Re}(b)>0$ and $|\arg(1-x)|<\pi$. Under the same conditions, we have the estimate
\begin{equation}\label{equ:bound-hypergeo}
\left|{}_2F_1(a,b;c;x)\right|\leqslant \left|\frac{\Gamma(c)}{\Gamma(b)\Gamma(c-b)}\right| \int_0^1 t^{\mathrm{Re}(b)-1}(1-t)^{\mathrm{Re}(c-b)-1}(1-xt)^{-\mathrm{Re}(a)}dt.
\end{equation}
\item Another formula we need is
\begin{equation}\label{equ:hyper-geo-square-root}
    {}_2F_1\left(\frac{\nu}{2},\frac{\nu+1}{2};1+\nu;x\right)
=\left(\frac{2}{1+\sqrt{1-x}}\right)^\nu.
\end{equation}
\end{itemize}

\subsubsection{The Weber–Schafheitlin integral}

The following evaluation is a standard Weber–Schafheitlin integral (see Watson \cite[p. 401 and p. 403]{Wat95} or DLMF \S 10.22), which gives the Mellin transform of a product of two Bessel functions.

\begin{lemma}[Weber–Schafheitlin]\label{lem:WS}
Let $\alpha,\beta,\lambda\in\mathbb{C}$ satisfy $0<\operatorname{Re}(\lambda)<\operatorname{Re}(\alpha+\beta+1)$. 

If $0<a<b$, we have
\begin{equation}\label{equ:WS-neq}
\begin{split}
&\int_{0}^{\infty}t^{-\lambda}J_{\alpha}(at)J_{\beta}(bt)dt\\
=& \frac{a^{\alpha}\Gamma\bigl(\frac{\alpha+\beta-\lambda+1}{2}\bigr)}
        {2^{\lambda}b^{\alpha-\lambda+1}\Gamma(\alpha+1)\Gamma\bigl(\frac{\beta-\alpha+\lambda+1}{2}\bigr)}
    {}_2F_1\left(\frac{\alpha+\beta-\lambda+1}{2},\frac{\alpha-\beta-\lambda+1}{2};\alpha+1;\frac{a^{2}}{b^{2}}\right).
\end{split}
\end{equation} 
In the equal argument case $a=b>0$ one has
\begin{equation}\label{equ:WS-eq}
\int_{0}^{\infty}t^{-\lambda}J_{\alpha}(at)J_{\beta}(at)dt
= \frac{a^{\lambda-1}\Gamma(\lambda)\Gamma\bigl(\frac{\alpha+\beta-\lambda+1}{2}\bigr)}
        {2^{\lambda}\Gamma\bigl(\frac{-\alpha+\beta+\lambda+1}{2}\bigr)
                \Gamma\bigl(\frac{\alpha+\beta+\lambda+1}{2}\bigr)
                \Gamma\bigl(\frac{\alpha-\beta+\lambda+1}{2}\bigr)}.
\end{equation}
\end{lemma}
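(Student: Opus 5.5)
The plan is to reduce both identities to the Mellin transform of a single Bessel function, \eqref{equ:mellin-Bessel}, via the Mellin transform of a product of Bessel functions. For \eqref{equ:WS-neq} I would start from the Mellin--Barnes representation of $J_\beta(bt)$, or equivalently insert the series expansion of $J_\alpha(at)$ and integrate term by term against $t^{-\lambda}J_\beta(bt)$. Concretely, write
\[
J_\alpha(at)=\sum_{m\ge0}\frac{(-1)^m}{m!\,\Gamma(m+\alpha+1)}\Bigl(\frac{at}{2}\Bigr)^{2m+\alpha}.
\]
Formally, each term contributes
\[
\Bigl(\frac a2\Bigr)^{2m+\alpha}\frac{(-1)^m}{m!\,\Gamma(m+\alpha+1)}\int_0^\infty t^{2m+\alpha-\lambda}J_\beta(bt)\,dt .
\]
By \eqref{equ:mellin-Bessel} with $z=2m+\alpha-\lambda+1$, this integral equals
\[
b^{\lambda-\alpha-2m-1}\,2^{2m+\alpha-\lambda}\,
\frac{\Gamma\bigl(\frac{\alpha+\beta-\lambda+1}{2}+m\bigr)}{\Gamma\bigl(\frac{\beta-\alpha+\lambda+1}{2}-m\bigr)}.
\]
Rewriting $1/\Gamma(c-m)=(-1)^m(1-c)_m/\Gamma(c)$ cancels the sign $(-1)^m$. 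The remaining sum over $m$ is then exactly
\[
{}_2F_1\Bigl(\tfrac{\alpha+\beta-\lambda+1}{2},\tfrac{\alpha-\beta-\lambda+1}{2};\alpha+1;a^2/b^2\Bigr)
\]
times the stated prefactor, and this series converges since $a<b$.

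The main obstacle is justification. The interchange is not legitimate termwise, because for large $m$ the integral $\int t^{2m+\alpha-\lambda}J_\beta$ is outside the range $\operatorname{Re}(z)<1/2$ of absolute convergence. To handle this, I would instead insert a convergence factor $e^{-\epsilon t}$, or work with the Mellin--Barnes integral
\[
J_\alpha(at)=\frac1{2\pi i}\int \frac{\Gamma(-s)}{\Gamma(\alpha+s+1)}\Bigl(\frac{at}{2}\Bigr)^{\alpha+2s}\,ds
\]
on a vertical line with $\operatorname{Re}(s)$ chosen so that \eqref{equ:mellin-Bessel} applies absolutely. With that choice Fubini applies, and Stirling's formula controls the $s$-integral. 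Shifting the contour to the right and collecting the residues at $s=m$ of $\Gamma(-s)$ then reproduces the series; this shift is allowed because $a/b<1$ makes the shifted integrals tend to $0$. One first proves the identity in a subregion of parameters where everything converges absolutely, and then extends it to the full region $0<\operatorname{Re}\lambda<\operatorname{Re}(\alpha+\beta+1)$ by analytic continuation in $\lambda$. This works because both sides are holomorphic there: the left side converges thanks to the $t^{-1}$ oscillatory decay of the product and the small-$t$ bound $t^{\alpha+\beta-\lambda}$.

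For \eqref{equ:WS-eq} I would let $a\to b^-$ in \eqref{equ:WS-neq} and apply Gauss's summation
\[
{}_2F_1(A,B;C;1)=\frac{\Gamma(C)\Gamma(C-A-B)}{\Gamma(C-A)\Gamma(C-B)},
\]
valid when $\operatorname{Re}(C-A-B)=\operatorname{Re}\lambda>0$. Here
\[
C-A=\tfrac{\alpha-\beta+\lambda+1}{2},\qquad C-B=\tfrac{\alpha+\beta+\lambda+1}{2}.
\]
The factors $\Gamma(\alpha+1)$ cancel, and the result is exactly the stated formula. The limit on the left is justified by dominated convergence, or more cleanly by noting that the left side is continuous in $a$ near $b$ when $\operatorname{Re}\lambda>0$. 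This follows from splitting the integral at $t=1$: near $0$ it is absolutely convergent, and near infinity the asymptotic expansion gives terms like $t^{-\lambda-1}\cos((a\pm b)t+\cdots)$ plus an absolutely integrable error, and these are uniformly convergent. On the right, Abel's theorem gives continuity of the ${}_2F_1$ at $x=1$. Alternatively, one may simply cite Watson \cite[p.~401, 403]{Wat95} as the paper does.
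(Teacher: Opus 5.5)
The paper gives no proof of this lemma; it only cites Watson and DLMF \S10.22. You instead derive it from the single-Bessel Mellin transform \eqref{equ:mellin-Bessel}, and the route is correct. You write $J_\alpha(at)$ as a Mellin--Barnes integral, apply Fubini, and shift the contour to the right, picking up the poles of $\Gamma(-s)$. The shift is legitimate because, after the reflection formula, the integrand on $\operatorname{Re}s=N+\tfrac12$ is $\ll (a/b)^{2N+1}|s|^{-\operatorname{Re}\lambda-1}$. For \eqref{equ:WS-eq} you then use Gauss's summation and a limit $a\to b^-$. The termwise constants check out, as do the identities $1-c=\frac{\alpha-\beta-\lambda+1}{2}$ and $C-A-B=\lambda$, and the values of $C-A$ and $C-B$; all reproduce the stated formulas. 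The citation buys brevity. Your argument makes the lemma self-contained relative to \eqref{equ:mellin-Bessel} and Gauss's theorem, and it makes explicit which parameter range is actually covered.

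That last point is where your write-up needs tightening. The region where your Fubini step is honest is smaller than you indicate. The Mellin--Barnes integral for $J_\alpha$ converges absolutely on $\operatorname{Re}s=\sigma$ only when $-\operatorname{Re}\alpha/2<\sigma<0$, since the integrand decays like $|\operatorname{Im}s|^{-2\sigma-1-\operatorname{Re}\alpha}$. Separately, $\int_0^\infty t^{w}|J_\beta(bt)|\,dt<\infty$ forces $-1-\operatorname{Re}\beta<\operatorname{Re}w<-\tfrac12$. Together these require $\operatorname{Re}\alpha>0$, $\operatorname{Re}\beta>-\tfrac12$ and $\operatorname{Re}\lambda>\tfrac12$.

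So analytic continuation in $\lambda$ alone, as you propose, never reaches $\beta=-\tfrac12$ or $\alpha=-\tfrac12$. Those are precisely the cases the paper uses in Lemma~\ref{lem:J-nu-n-ell}, where $\cos$ is written as $J_{-1/2}$. The fix is to continue in all three variables jointly. Both sides are holomorphic in $(\alpha,\beta,\lambda)$ on the convex domain $0<\operatorname{Re}\lambda<\operatorname{Re}(\alpha+\beta+1)$:
\begin{itemize}
\item on the left, $|J_\alpha(at)J_\beta(bt)|\ll\min(t^{\operatorname{Re}(\alpha+\beta)},t^{-1})$ locally uniformly in the orders, so the integral converges absolutely and locally uniformly (no oscillation is needed once $\operatorname{Re}\lambda>0$);
\item on the right, $\Gamma(\alpha+1)^{-1}{}_2F_1(\cdot,\cdot;\alpha+1;a^2/b^2)$ is entire in $\alpha$.
\end{itemize}
The same majorant gives the dominated-convergence justification of $a\to b^-$ directly.
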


\subsection{Trace formulas}

\subsubsection{Poisson summation formula}

\begin{lemma}[Poisson summation]\label{lem:poisson-summation}
Let $f$ be a continuous function on $\mathbb{R}$ such that $f$, $f'$, and $f''$ belong to $L^1(\mathbb{R})$. Then
\begin{equation}\label{equ:poisson-summation}
  \sum_{n\in\mathbb{Z}}f(n)=\sum_{n\in\mathbb{Z}}\widehat f(n), 
\end{equation}
where the Fourier transform is defined by
\[
\widehat{f}(y) = \int_{-\infty}^{\infty} f(x) e^{-2\pi i x y} dx.
\]
\end{lemma}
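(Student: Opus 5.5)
The plan is to deduce the Poisson summation formula from the pointwise convergence of the Fourier series of the periodization of $f$. Define
\[
F(x)=\sum_{m\in\mathbb{Z}} f(x+m).
\]

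\textbf{Step 1: the periodization is a well-defined $C^1$ function.} Since $f''\in L^1$, the derivative $f'$ is absolutely continuous with $f'(x)=f'(0)+\int_0^x f''$. Hence $f'$ has limits at $\pm\infty$, and these limits must vanish because $f'\in L^1$. Consequently $f'$ is bounded and uniformly continuous, and the same reasoning applied to $f$ shows that $f$ is bounded and tends to $0$ at infinity. For uniform convergence of the series defining $F$ on $[0,1]$, I will use the elementary bound
\[
\sup_{x\in[m,m+1]}|f(x)|\le \int_m^{m+1}\bigl(|f|+|f'|\bigr).
\]
Summing over $m$ shows that $\sum_m f(x+m)$ converges absolutely and uniformly. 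The same argument applied to $f'$ and $f''$ shows that $\sum_m f'(x+m)$ converges uniformly. Therefore $F$ is $C^1$ and $1$-periodic.

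\textbf{Step 2: the Fourier coefficients of $F$.} By uniform convergence we may integrate term by term and unfold:
\[
\int_0^1 F(x)e^{-2\pi i n x}\,dx=\int_{\mathbb{R}} f(x)e^{-2\pi i n x}\,dx=\widehat f(n).
\]

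\textbf{Step 3: absolute summability of the coefficients.} Integrate by parts twice; the boundary terms vanish because $f,f'\to0$ at infinity. For $n\ne0$ this gives
\[
\widehat f(n)=\frac{\widehat{f''}(n)}{(2\pi i n)^2},
\qquad\text{so}\qquad
|\widehat f(n)|\le \frac{\|f''\|_{L^1}}{4\pi^2 n^2}.
\]
Thus $\sum_n|\widehat f(n)|<\infty$.

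\textbf{Step 4: conclusion.} Since the Fourier coefficients of $F$ are absolutely summable, the series $\sum_n \widehat f(n)e^{2\pi i n x}$ converges uniformly to a continuous function with the same Fourier coefficients as $F$. By uniqueness of Fourier coefficients for continuous functions (Fejér's theorem), this function equals $F$ everywhere. Evaluating at $x=0$ gives $\sum_m f(m)=F(0)=\sum_n\widehat f(n)$, which is \eqref{equ:poisson-summation}.

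\textbf{Main obstacle.} The only delicate point is Step 1: justifying decay and boundedness of $f$ and $f'$ from the integrability hypotheses alone. This is what makes the periodization converge and kills the boundary terms in Step 3; I will handle it with the absolute-continuity argument described there.
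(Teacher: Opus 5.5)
The paper does not prove this lemma. It is quoted as a standard tool and then applied, after splitting $m$ into residue classes modulo $c$, to functions such as $y\mapsto g(d^2|y|)J_{2k-1}(4\pi|y|\sqrt{\ell}/c)$, so there is no argument in the paper to compare with.

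Your proof is the standard periodization argument and it is correct. The bound $\sup_{[m,m+1]}|f|\le\int_m^{m+1}(|f|+|f'|)$ gives uniform convergence of $F$. Unfolding identifies the Fourier coefficients of $F$ with $\widehat f(n)$. Two integrations by parts, with boundary terms killed by $f,f'\to0$, give $|\widehat f(n)|\le\|f''\|_{L^1}/(4\pi^2n^2)$. Uniqueness of Fourier coefficients of a continuous function then finishes the proof.

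Two small remarks.

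First, the sentence ``since $f''\in L^1$, $f'$ is absolutely continuous'' is not a valid deduction if $f''$ is only an almost-everywhere derivative: take $f'$ to be a Cantor-type function, which has $f''=0$ a.e. but is not absolutely continuous. You should instead state it as the meaning of the hypothesis. It holds automatically if $f''$ exists at every point, since an everywhere differentiable function with integrable derivative is the integral of its derivative. It is also exactly what the Sobolev reading of the hypothesis gives.

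This weaker reading ($f\in C^1$, $f'$ locally absolutely continuous, $f''\in L^1$) is in fact what the paper needs on the dual Maass side. There, near $y=0$, the function $y\mapsto|y|^{z-1}\mathcal{J}_{h_{1-z}}(4\pi|y|\sqrt{\ell}/c)$ is controlled only through bounds $R_{\ell,h}^{(j)}(y)\ll y^{\alpha-j}$. These yield an integrable but not necessarily bounded second derivative, and your argument covers that case.

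Second, Step 4 only uses continuity of $F$, so the $C^1$ conclusion in Step 1 is more than you need.
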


\subsubsection{Petersson trace formula of level \texorpdfstring{$1$}{1}}

Let $S_{2k}$ be the space of holomorphic cusp forms of weight $2k$ and level $1$, and let $\{f_j\}$ be a basis of Hecke eigenforms with Fourier expansions
\[
f_j(z)=\sum_{n\geqslant1} a_j(n)n^{\frac{2k-1}{2}} e^{2\pi i n z},\qquad a_j(1)=1.
\]
The \emph{Petersson trace formula} states that for any positive integers $m,n$,
\[
\sum_{j}\frac{\Gamma(2k-1)}{(4\pi)^{2k-1}\|f_j\|^2}a_j(m)\overline{a_j(n)}
= \delta_{m,n}+2\pi i^{-2k}\sum_{c\geqslant1}\frac{S(m,n;c)}{c}J_{2k-1}\left(\frac{4\pi\sqrt{mn}}{c}\right).
\]
Here $\|f_j\|$ is the Petersson inner product of $f_j$, and $J_{2k-1}$ is the Bessel function of the first kind.

\subsubsection{Kuznetsov trace formula (same-sign)}

We shall use the same-sign Kuznetsov trace formula in the following normalization.
Let $h$ be a Kuznetsov-admissible function. For such
$h$, put
\[
H_0(h):=\frac1{\pi^2}\int_{-\infty}^{\infty}
t\tanh(\pi t)h(t)dt
\]
and
\[
\mathbb J_t(x):=
\frac{i}{\pi}\frac{t}{\cosh(\pi t)}
\bigl(J_{2it}(x)-J_{-2it}(x)\bigr),
\qquad
\mathcal J_h(x):=\int_{-\infty}^{\infty}h(t)\mathbb J_t(x)dt .
\]

Then for positive integers $m,n$ (cf. \cite[Theorem 7.14]{KL13}),
\begin{equation}\label{equ:level-one-kuznetsov}
\begin{aligned}
&\sum_{u\in\mathcal B}\omega_u h(t_u)\lambda_u(m)\overline{\lambda_u(n)}
+\frac1{\pi}\int_{-\infty}^{\infty}
\frac{h(t)\tau_t(m)\overline{\tau_t(n)}}{|\zeta(1+2it)|^2}dt  \\
&\qquad =
\delta_{m,n}H_0(h)+
\sum_{c\geqslant1}\frac{S(m,n;c)}{c}
\mathcal J_h\left(\frac{4\pi\sqrt{mn}}{c}\right).
\end{aligned}
\end{equation}
For level $1$ and trivial central character the Hecke eigenvalues
$\lambda_u(n)$ and the divisor coefficients $\tau_t(n)$ are real, so the
conjugates are harmless; they are retained to make the sesquilinear
normalization of Kuznetsov explicit.

\subsection{Arithmetic sums}
\subsubsection{Kloosterman sums}
For positive integers $c$ and arbitrary integers $m,n$, the Kloosterman sum is defined by
\[
S(m,n;c)=\sum_{x\bmod c,~~(x,c)=1} e\left(\frac{mx+n\bar{x}}{c}\right),
\]
where $\bar{x}$ denotes the multiplicative inverse of $x$ modulo $c$ and $e(z)=e^{2\pi iz}$. These sums satisfy the Weil bound
\[
|S(m,n;c)|\le \tau(c)\sqrt{c}\gcd(m,n,c)^{1/2},
\]
where $\tau(c)$ is the divisor function. In particular, for fixed $m,n$ one has $S(m,n;c)=O(c^{1/2+o(1)})$ as $c\to\infty$.

\subsubsection{Ramanujan sums}
The Ramanujan sum is defined for an integer $q\geqslant 1$ by
\[
c_q(n)=\sum_{a\bmod q,~~(a,q)=1} e\left(\frac{an}{q}\right).
\]
Using the Möbius inversion one obtains the explicit formula
\[
c_q(n)=\sum_{d\mid (q,n)} d\mu\left(\frac{q}{d}\right).
\]

A crucial arithmetic identity (used in the evaluation of $\Sigma_c(\ell,n)$) is
\begin{equation}\label{equ:Ramanujansum}
\sideset{}{^*}\sum_{x\bmod q} e\left(\frac{\ell\bar{x}}{q}\right)\sum_{a\bmod q} e\left(\frac{xa^{2}+an}{q}\right)
= q\sum_{d\mid q}\mu\left(\frac{q}{d}\right)\rho_d(n,\ell,2k-1),
\end{equation}
where
\begin{equation}\label{equ:def-rhodnl}
\rho_d(n,\ell,2k-1)=\#\{u\bmod d: u^{2}+nu+\ell\equiv0\pmod{d}\}.
\end{equation}
\emph{Derivation.} Let $u=ax$. Then the left hand side is equal to
\begin{equation*}
\sideset{}{^\ast}\sum_{x\bmod{q}}e\left(\frac{\ell\overline{x}}{q}\right)\sum_{u\bmod{q}}
e\left(\frac{u^2\overline{x}+nu\overline{x}}{q}\right)=\sum_{u\bmod{q}}~~\left[~~\sideset{}{^\ast}\sum_{x\bmod{q}}e\left(\frac{(u^2+nu+\ell)\overline{x}}{q}\right)\right]=\sum_{u\bmod{q}}c_q(u^2+nu+\ell).
\end{equation*}
Thus the desired arithmetic identity \eqref{equ:Ramanujansum} follows.

\subsection{Quadratic Dirichlet series \texorpdfstring{$\mathscr{L}_D(s)$}{L\_D(s)}}

Let $D$ be an integer. Following Zagier \cite{Zag06}, we write
\[
\mathscr{L}_D(s)=
    \frac{\zeta(2s)}{\zeta(s)}\sum_{r\geqslant 1}\frac{\rho_r(D)}{r^s},
\]
where $\rho_r(D)=\#\{u\bmod r : u^2\equiv D\pmod{r}\}=\rho_{r}(n,\ell,2k-1)$ with $D=n^2-4\ell$. (see \eqref{equ:def-rhodnl}). These coefficients appear naturally after applying Poisson summation to quadratic exponential sums. For $D=0$ one has $\mathscr{L}_0(s)=\zeta(2s-1)$.

The series $\mathscr{L}_D(s)$ can be expressed in terms of Dirichlet $L$-functions attached to quadratic characters. Write $D=D_0f^2$ where $D_0$ is a fundamental discriminant (with $D_0=1$ when $D$ is a perfect square). Then Zagier's formula (see \cite{Zag06}) gives
\begin{equation}\label{equ:Zagier}
\mathscr{L}_D(s)=L(s,\chi_{D_0})\sum_{a\mid f}\mu(a)\chi_{D_0}(a)a^{-s}\sigma_{1-2s}\left(\frac{f}{a}\right),
\end{equation}
where $\sigma_w(n)=\sum_{d\mid n}d^w$. In particular, $\mathscr{L}_D(s)$ is a finite linear combination of Dirichlet $L$-series and therefore admits a meromorphic continuation to $\mathbb{C}$ with at most a simple pole at $s=1$ (when $D$ is a square, coming from the Riemann zeta factor). The functional equation is
\begin{equation}\label{equ:LD_fe}
  \mathscr{L}_D(1-s)=\Xi_D(s)\mathscr{L}_D(s), \qquad \Xi_D(s)=\left(\frac{|D|}{\pi}\right)^{s-\frac12}
\frac{\Gamma\left(\frac{s+\delta_D}{2}\right)}{\Gamma\left(\frac{1-s+\delta_D}{2}\right)},  
\end{equation}
with $\delta_D=0$ if $D>0$ and $\delta_D=1$ if $D<0$. This is a standard consequence of the functional equation of Dirichlet $L$-functions and the properties of the divisor sum $\sigma_{1-2s}$; see \cite{Zag06} for a detailed derivation.

For our purposes we only need the following uniform bound (convexity bound) on vertical strips: for any fixed $a<b$ and any constant $\delta>0$, there exists a constant $A=A(a,b)>0$ such that
\begin{equation}\label{equ:convexbound}
 \mathscr{L}_D(s)\ll_{a,b,\delta} (1+| \mathrm{Im}(s)|)^{A}|D|^{\max(0,(1-\operatorname{Re}s)/2)+\delta},   
\end{equation}
for $a<\mathrm{Re}(s)<b$ apart from the possible simple pole at $s=1$ when $D$ is a square. The bound follows from the corresponding estimate for the Dirichlet $L$-function and the divisor sum $\sigma_{1-2s}(f/a)$, which grows at most polynomially in $f$ and in $| \mathrm{Im}(s)|$.

When $D$ is a non‑zero square, the factor $L(s,\chi_{D_0})$ becomes $\zeta(s)$ (up to a finite Euler product), and $\mathscr{L}_D(s)$ has a simple pole at $s=1$. Moreover, by \eqref{equ:Zagier}, we have
\begin{equation*}
    \operatorname*{Res}_{s=1}\mathscr{L}_{d^2}(s)=\begin{cases}
        \frac12,&\quad d=0,\\
        1,&\quad d\geqslant 1.
    \end{cases}
\end{equation*}

\section{The Archimedean Kernels and Their Transforms}

This section defines the Schwartz space $\mathcal{S}$ and its Mellin transform, then introduces the key archimedean transforms $\mathcal{T}_\mu$ (holomorphic case) and $\mathcal{T}_{\mu_t}$ (Maass case), studying their growth properties. Explicit formulas, functional equations, and singularity behaviors of the kernels $\mathcal{J}_{n,\ell}(z)$ and $\mathcal{J}_{n,\ell,t}(z)$ are derived in detail to prepare for the averaged Voronoi identities.

\subsection{Schwartz space and Mellin transform}
\begin{definition}\label{def:schwartz}
The space $\mathcal{S}=\mathcal{S}(0,\infty)$ consists of all smooth functions $f:(0,\infty)\to\mathbb{C}$ such that for every $m\in\mathbb{Z}$ and every $n\in\mathbb{Z}_{\geqslant 0}$
\[
\sup_{x>0}\bigl|x^m f^{(n)}(x)\bigr|<\infty .
\]
Equivalently, $f$ decays rapidly as $x\to\infty$ and together with all its derivatives vanishes rapidly as $x\to 0^+$.
\end{definition}

For an integrable function $f$ on $(0,\infty)$ the Mellin transform is defined by
\[
\widetilde{f}(s)=\int_0^\infty f(x)x^{s-1}dx 
\]
whenever the integral converges absolutely. 

If $f\in\mathcal{S}$, then $\widetilde{f}(s)$ is entire and satisfies the following property: for any real numbers $a<b$ and any integer $m\ge0$,
\[
\sup_{a\le\operatorname{Re}(s)\le b}\bigl|s^m\widetilde{f}(s)\bigr|<\infty . 
\]
We say $\widetilde{f}(s)$ is \emph{essentially bounded} if the above estimate holds for every $m\in\mathbb{Z}_{\geqslant 0}$ and any real numbers $a<b$.

A useful observation is that the converse also holds. Namely,
\begin{lemma}\label{lem:mellin-Schwartz}
Let $F(s)$ be an entire function. If $F$ is essentially bounded, then the Mellin inversion
\begin{equation*}
    f(x)=\frac{1}{2\pi i}\int_{(\sigma)}F(s)x^{-s}ds=\frac{1}{2\pi}\int_{-\infty}^{\infty}F(\sigma+it)x^{-\sigma-it}dt.
\end{equation*}
is a function in $\mathcal {S}$, where $\sigma\in\mathbb{R}$.
\end{lemma}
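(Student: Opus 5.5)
The plan is to write $f$ as an inverse Fourier transform and transfer the known properties of Schwartz functions on $\R$ back to $(0,\infty)$ through the change of variables $x=e^{-u}$. The integral defining $f$ converges absolutely, since $F$ is essentially bounded on the line $\operatorname{Re}(s)=\sigma$. Because $F$ is entire and decays faster than any power of $|s|$ uniformly on vertical strips, Cauchy's theorem lets us move the contour: the integrals over the short horizontal segments at height $\pm T$ tend to $0$. Hence $f$ does not depend on $\sigma$. This independence is the key to controlling $x^m f$ near $0$ and near $\infty$ for every $m$.

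For the growth bounds, fix $m\in\mathbb{Z}$ and $n\geqslant 0$. Differentiating under the integral sign gives
\[
f^{(n)}(x)=\frac{1}{2\pi i}\int_{(\sigma)}F(s)(-s)(-s-1)\cdots(-s-n+1)x^{-s-n}ds.
\]
This is justified by dominated convergence, because $|s|^{n}|F(s)|$ is integrable on the line, again by essential boundedness. We then take $\sigma=m-n$, which is allowed since $f$ is independent of $\sigma$. The factor $x^{-s-n}$ has absolute value $x^{-m}$, so
\[
|x^m f^{(n)}(x)|\leqslant \frac{1}{2\pi}\int_{-\infty}^{\infty}|F(\sigma+it)|\,|\sigma+it|^n\cdots dt<\infty,
\]
uniformly in $x>0$. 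Every one of the required seminorms is therefore finite.

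Smoothness follows from the same domination argument: the integrand is smooth in $x$, and all its $x$-derivatives are dominated by integrable functions, locally uniformly in $x$. The main point needing care is the contour shift. It uses the uniformity of the bound $\sup_{a\le\operatorname{Re}s\le b}|s^2F(s)|<\infty$ on the strip between two vertical lines, which makes the horizontal contributions $O(T^{-2})\cdot(b-a)\cdot\max(x^{-a},x^{-b})\to 0$. With that in place, the conclusion $f\in\mathcal S$ follows directly from Definition \ref{def:schwartz}.
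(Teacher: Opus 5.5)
Your proof is correct and follows essentially the same route as the paper: you differentiate under the integral sign and use the freedom in $\sigma$ to make $|x^{-s-n}|$ equal to whatever power of $x$ is needed. You also justify that freedom explicitly via Cauchy's theorem, which the paper uses only tacitly, and your single choice $\sigma=m-n$ handles $x\to0^+$ and $x\to\infty$ at once, avoiding a small exponent slip in the paper's choice $\sigma=-A+m$. The Fourier/$x=e^{-u}$ framing in your opening sentence is never actually used and could be dropped.
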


\begin{proof}
Since $F$ is entire and essentially bounded, the Mellin inversion can be differentiated under the integral sign for any $\sigma\in\mathbb{R}$:
\[
f^{(m)}(x) = \frac{(-1)^m}{2\pi i} \int_{(\sigma)} F_m(s) x^{-s-m} ds,
\]
where $F_m(s)=s(s+1)\cdots(s+m-1)F(s)$ is also entire and essentially bounded.

Let $A>0$. For $x\to\infty$, taking $\sigma=A-m$  yields $|f^{(m)}(x)|\ll x^{-A}$; for $x\to0^+$, taking $\sigma=-A+m$ yields $|f^{(m)}(x)|\ll x^{A}$.
\end{proof}

\subsection{The holomorphic case}

\subsubsection{The archimedean transform \texorpdfstring{$\mathcal{T}_\mu$}{T\_mu}}

For the holomorphic case we fix an integer $k\geqslant 2$ and set
\[
\mu=(1-2k,2k-1,0).
\]
\begin{definition}\label{def:T_mu_holo}
For $g\in\mathcal{S}$ define $\mathcal{T}_\mu g:(0,\infty)\to\mathbb{C}$ by Mellin inversion
\begin{equation}\label{equ:defTmu}
(\mathcal{T}_\mu g)(y)=\frac1{2\pi i}\int_{(\sigma)} \widetilde{g}(1-s)\mathcal{G}_\mu(s)y^{-s}ds,
\qquad \sigma>0,
\end{equation}
where $\mathcal{G}_\mu(s)$ is defined in \eqref{equ:def-Gmu}. Equivalently, the Mellin transform of $\mathcal{T}_\mu g$ is
\[
\widetilde{\mathcal{T}_\mu g}(s)=\widetilde{g}(1-s)\mathcal{G}_\mu(s).
\]
\end{definition}

Recall that
\begin{equation*}
    \mathcal{G}_\mu(s)=G_0(s+2k-1)G_0(s-2k+1)G_0(s),
\end{equation*}
where
\[
G_0(s)=2(2\pi)^{-s}\Gamma(s)\cos\left(\frac{\pi s}{2}\right)=\frac{\Gamma_{\mathbb{R}(s)}}{\Gamma_{\mathbb{R}}(1-s)}.
\]
So the function $\mathcal{G}_\mu(s)$ is holomorphic when $\operatorname{Re}(s)>0$ and satisfies the functional equation
\[
\mathcal{G}_\mu(s)\mathcal{G}_\mu(1-s)=1.
\]
Moreover, by Stirling's formula, $|\mathcal{G}_\mu(s)|$ grows at most polynomially in $|\operatorname{Im}(s)|$ on any finite vertical strip.

The transform $\mathcal{T}_\mu g$ enjoys the following properties, which are weaker than those of $g$ but sufficient for our purposes.

\begin{lemma}\label{lem:Tmug-props}
Let $g\in \mathcal{S}$. Then $\mathcal{T}_\mu g$ is smooth on $(0,\infty)$. Moreover:
\begin{itemize}
    \item For every $A>0$ and every $m\ge 0$,
    \[
    (\mathcal{T}_\mu g)^{(m)}(y) \ll y^{-A} \quad \text{as } y\to\infty.
    \]
    \item As $y\to 0^+$,
    \[
    \mathcal{T}_\mu g(y)=O(y),\quad (\mathcal{T}_\mu g)'(y)=O(1),\quad (\mathcal{T}_\mu g)''(y)=O(1).
    \]
\end{itemize}
The Mellin transform $\widetilde{\mathcal{T}_\mu g}(s)$ is holomorphic for $\operatorname{Re}(s)>0$. Furthermore, for any $0<a<b$ and any $N>0$,
\[
\sup_{a\le \operatorname{Re}(s)\le b} \bigl| s^N \widetilde{\mathcal{T}_\mu g}(s) \bigr| < \infty.
\]
\end{lemma}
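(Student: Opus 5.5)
The plan is to read off every claim from the Mellin--Barnes representation \eqref{equ:defTmu}. We move the line of integration right to get decay at infinity, and left to get the behaviour at $0$. The only input beyond Lemma~\ref{lem:mellin-Schwartz}-type differentiation under the integral is a precise bookkeeping of the poles of $\mathcal{G}_\mu$ in a left half-plane.

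\emph{Step 1: the integrand.} Since $g\in\mathcal{S}$, the function $\widetilde g(1-s)$ is entire and essentially bounded on every vertical strip. Using $G_0(s)=\Gamma_{\mathbb{R}}(s)/\Gamma_{\mathbb{R}}(1-s)$, $G_0$ has simple poles exactly at $s=0,-2,-4,\dots$ and simple zeros exactly at $s=1,3,5,\dots$. I will locate the poles of $\mathcal{G}_\mu(s)=G_0(s+2k-1)G_0(s-2k+1)G_0(s)$ in the region $\operatorname{Re}(s)>-3-\varepsilon$:
\begin{itemize}
\item The factor $G_0(s-2k+1)$ has poles at $s=2k-1,2k-3,\dots,1,-1,-3,\dots$. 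For $s=1,3,\dots,2k-1$ these are cancelled by the zeros of $G_0(s)$.
\item The poles of $G_0(s)$ at $s=0$ and $s=-2$ are cancelled by the zeros of $G_0(s+2k-1)$ at $s+2k-1=2k-1$ and $2k-3$. This uses $k\geqslant2$.
\end{itemize}
Hence $\mathcal{G}_\mu$ is holomorphic in $\operatorname{Re}(s)>-1$, has a simple pole at $s=-1$, and is holomorphic in $-3<\operatorname{Re}(s)<-1$. By Stirling's formula it grows at most polynomially in $|\operatorname{Im}(s)|$ on vertical strips away from these poles.

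It follows that $\widetilde g(1-s)\mathcal{G}_\mu(s)$ is holomorphic for $\operatorname{Re}(s)>0$, and that $s^N\widetilde g(1-s)\mathcal{G}_\mu(s)$ is bounded on $a\le\operatorname{Re}(s)\le b$. This is exactly the claimed statement about $\widetilde{\mathcal{T}_\mu g}$, once we know it is the Mellin transform of $\mathcal{T}_\mu g$; that follows from Mellin inversion, using the absolute convergence established in Step~2. By Cauchy's theorem, together with the rapid decay in $|\operatorname{Im}(s)|$, the definition is also independent of $\sigma>0$.

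\emph{Step 2: smoothness and decay at infinity.} Rapid decay of the integrand lets us differentiate under the integral sign:
\[
(\mathcal{T}_\mu g)^{(m)}(y)=\frac{(-1)^m}{2\pi i}\int_{(\sigma)}s(s+1)\cdots(s+m-1)\,\widetilde g(1-s)\mathcal{G}_\mu(s)\,y^{-s-m}\,ds ,
\]
and the integral converges absolutely. This gives smoothness. Taking $\sigma=A$ large then yields $(\mathcal{T}_\mu g)^{(m)}(y)\ll y^{-A-m}\le y^{-A}$ for $y\geqslant1$.

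\emph{Step 3: behaviour at $0$.} This is the step requiring care, because of the pole at $s=-1$. I will shift the contour from $\sigma>0$ to $\sigma=-3+\varepsilon$. The horizontal segments vanish by the decay of $\widetilde g(1-s)$. We cross only the simple pole at $s=-1$, whose residue is $c\,y$ with $c=-\widetilde g(2)\operatorname{Res}_{s=-1}\mathcal{G}_\mu(s)$ (up to sign conventions). Thus
\[
\mathcal{T}_\mu g(y)=c\,y+R(y),\qquad R(y)=\frac{1}{2\pi i}\int_{(-3+\varepsilon)}\widetilde g(1-s)\mathcal{G}_\mu(s)\,y^{-s}\,ds .
\]
Differentiating $R$ under the integral as in Step~2 gives $R^{(m)}(y)\ll y^{3-\varepsilon-m}$. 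Hence:
\begin{itemize}
\item $\mathcal{T}_\mu g(y)=O(y)$;
\item $(\mathcal{T}_\mu g)'(y)=c+O(y^{2-\varepsilon})=O(1)$;
\item $(\mathcal{T}_\mu g)''(y)=O(y^{1-\varepsilon})=O(1)$.
\end{itemize}

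The main obstacle is precisely the pole analysis in Step~1. One must make sure that no further pole of $\mathcal{G}_\mu$ occurs in $-3<\operatorname{Re}(s)\le 0$ other than $s=-1$; in particular the cancellation at $s=-2$ is what permits shifting far enough to control the second derivative. If that cancellation failed, a $y^2$ term would appear, which is still harmless. A $y\log y$ term from a double pole would not be harmless, so I would double-check the parities: $2k\pm1$ is odd, and $k\geqslant2$.
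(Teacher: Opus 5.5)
Your proposal is correct and takes the same route as the paper: you differentiate the Mellin--Barnes integral under the integral sign, shift the contour right to get decay at infinity, and shift it left past $s=-1$ to get $\mathcal{T}_\mu g(y)=c\,y+O(y^{3-\varepsilon})$ near $0$. The only difference is that the paper shifts to $\operatorname{Re}(s)=-\tfrac52$ and lists residues at $s=-1,-2$, while your bookkeeping shows that the pole of $G_0(s)$ at $s=-2$ is cancelled by the zero of $G_0(s+2k-1)$, so only $s=-1$ contributes. (The poles of $G_0(s+2k-1)$ lie at $s\le 1-2k\le -3$, which is worth stating explicitly.)
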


\begin{proof}
Since $g\in\mathcal{S}$, $\widetilde{g}$ is entire and essentially bounded. The function
\[
\widetilde{\mathcal{T}_\mu g}(s)=\widetilde{g}(1-s)\mathcal{G}_\mu(s)
\]
inherits from $\widetilde{g}$ and $\mathcal{G}_\mu$ the following properties:
\begin{itemize}
    \item it is holomorphic for $\operatorname{Re}(s)>0$;
    \item it is polynomially bounded on vertical strips.
\end{itemize}

Let $A>0$. For $y\to\infty$, choose $\sigma = A+m$. Then
\[
(\mathcal{T}_\mu g)^{(m)}(y) = \frac{(-1)^m}{2\pi i} \int_{(A+m)} s\cdots(s+m-1) \widetilde{\mathcal{T}_\mu g}(s) \, y^{-s-m} ds,
\]
and the integrand decays rapidly, yielding $(\mathcal{T}_\mu g)^{(m)}(y) \ll y^{-A-m}$.

For $y\to0^+$, shift the contour to $\operatorname{Re}(s)=-\frac52$ and collect residues at $s=-1,-2$:
\[
(\mathcal{T}_\mu g)^{(m)}(y)=(-1)^m\frac{1}{2\pi i}\int_{(-\frac52)} s\cdots(s+m-1)\widetilde{\mathcal{T}_\mu g}(s)y^{-s-m} ds + \operatorname*{Res}_{s=-1} + \operatorname*{Res}_{s=-2}.
\]
The contour integral contributes lower-order terms, while the residues determine the leading asymptotic:
\[
\mathcal{T}_\mu g(y)=O(y),\quad (\mathcal{T}_\mu g)'(y)=O(1),\quad (\mathcal{T}_\mu g)''(y)=O(1) \qquad (y\to0^+).
\]
\end{proof}

\subsubsection{The holomorphic kernel \texorpdfstring{$\mathcal{J}_{n,\ell}(z)$}{J\_\{n,ℓ\}(z)}}
For every integer $n$, every integer $\ell\geqslant1$ and a complex parameter $z$ with $-1/2<\operatorname{Re}(z)<1/2$ we define
\[
\mathcal{J}_{n,\ell}(z)=2\int_{0}^{\infty}y^{z-1}J_{2k-1}(4\pi\sqrt{\ell}y)\cos(2\pi ny)dy.
\]
More generally, for $\nu\in\mathbb{C}$ with $\mathrm{Re}>1\frac12$, we define
\begin{equation*}
\mathcal{J}^{\nu}_{n,\ell}(z):=2\int_{0}^{\infty}y^{z-1}J_{\nu}(4\pi\sqrt{\ell}y)\cos(2\pi |n|y)dy,
\end{equation*}
which converges absolutely when $-\mathrm{Re}(\nu)<\mathrm{Re}(z)<\frac12$.

\begin{lemma}\label{lem:J-nu-n-ell}
Let $\nu\in\mathbb{C}$ with $\mathrm{Re}(\nu)\geqslant 0$. Denote
\begin{equation*}
\mathcal{J}^{\nu}_{n,\ell}(z):=2\int_{0}^{\infty}y^{z-1}J_{\nu}(4\pi\sqrt{\ell}y)\cos(2\pi |n|y)dy.
\end{equation*}
The integral converges absolutely when $-\mathrm{Re}(\nu)<\mathrm{Re}(z)<\frac12$. Moreover,
\begin{itemize}
    \item If $n=0$, we have
    \[
        \mathcal{J}^{\nu}_{0,\ell}(z)=2^{-z}\pi^{-z}\ell^{-\frac{z}{2}}\frac{\Gamma(\frac{\nu+z}{2})}{\Gamma(\frac{\nu+1-z}{2})}.
    \]
    \item If $n\neq 0$ and $n^2-4\ell<0$, we have
    \[
        \mathcal{J}^{\nu}_{n,\ell}(z)=C_{-}(z;\ell,\nu)
     {}_2F_1\left(\frac{\nu+z}{2},\frac{-\nu+z}{2};\frac12;\frac{n^{2}}{4\ell}\right)
    \]
    with
     \[
     C_{-}(z;\ell,\nu)=2^{-z}\pi^{-z}\ell^{-z/2}
                   \frac{\Gamma\bigl(\frac{\nu+z}{2}\bigr)}
                        {\Gamma\bigl(\frac{\nu+1-z}{2}\bigr)}.
     \]
    If we take $n=0$, it coincides with the previous case.
    \item If $n^2-4\ell>0$, we have
    \[
     \mathcal{J}^{\nu}_{n,\ell}(z)=C_{+}(z;n,\ell,\nu)
     {}_2F_1\left(\frac{\nu+z}{2},\frac{\nu+z+1}{2};\nu+1;\frac{4\ell}{n^{2}}\right),
     \]
     where
     \[
     C_{+}(z;n,\ell,\nu)=2^{\nu}\pi^{\frac{1}{2}-z}\ell^{\frac{\nu}{2}}|n|^{-\nu-z}
                     \frac{\Gamma\bigl(\frac{\nu+z}{2}\bigr)}
                          {\Gamma(\nu+1)\Gamma\bigl(\frac{-\nu-z+1}{2}\bigr)}.
     \]
     \item If $\ell=m^2$ for some $m\geqslant 1$ and $n^2=4\ell$, then
     \begin{equation*}
     \mathcal{J}^{\nu}_{\pm 2m,m^{2}}(z)=2^{-z}\pi^{\frac{1}{2}-z}m^{-z}
        \frac{\Gamma\bigl(\frac{\nu+z}{2}\bigr)\Gamma\bigl(\frac12-z\bigr)}
             {\Gamma\bigl(\frac{-\nu-z+1}{2}\bigr)\Gamma\bigl(\frac{\nu-z}{2}+1\bigr)\Gamma\bigl(\frac{\nu-z+1}{2}\bigr)}.
     \end{equation*}
\end{itemize}
\end{lemma}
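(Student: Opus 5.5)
The plan is to reduce every case to the Weber--Schafheitlin integral of Lemma~\ref{lem:WS}, using $\cos(x)=\sqrt{\pi x/2}\,J_{-1/2}(x)$. Writing $\cos(2\pi|n|y)=\pi\sqrt{|n|y}\,J_{-1/2}(2\pi|n|y)$ for $n\neq0$ gives
\[
\mathcal{J}^{\nu}_{n,\ell}(z)=2\pi|n|^{1/2}\int_0^\infty y^{z-\frac12}J_{\nu}(4\pi\sqrt{\ell}\,y)J_{-1/2}(2\pi|n|y)\,dy .
\]
This is \eqref{equ:WS-neq} or \eqref{equ:WS-eq} with $\lambda=\tfrac12-z$. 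The pair $(\alpha,\beta)$ and the ratio $a/b$ are fixed by which of $2\pi|n|$ and $4\pi\sqrt\ell$ is smaller. The hypothesis $0<\operatorname{Re}\lambda<\operatorname{Re}(\alpha+\beta+1)$ becomes $-\operatorname{Re}\nu<\operatorname{Re} z<\tfrac12$. Absolute convergence in this range follows from the small-$x$ bound $J_\nu(x)\ll x^{\operatorname{Re}\nu}$ and the large-$x$ bound $J_\nu(x)\ll x^{-1/2}$, since the integrand is $\ll y^{\operatorname{Re} z-3/2}$ at infinity.

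\textbf{Case $n=0$.} Apply \eqref{equ:mellin-Bessel} directly after the substitution $x=4\pi\sqrt\ell\,y$. This gives
\[
2(4\pi\sqrt\ell)^{-z}2^{z-1}\frac{\Gamma(\frac{\nu+z}{2})}{\Gamma(1+\frac{\nu-z}{2})}.
\]
The stated formula has $\Gamma(\frac{\nu+1-z}{2})$ in the denominator rather than $\Gamma(1+\frac{\nu-z}{2})$. I would recheck this normalization: presumably the claimed denominator, or the Mellin formula as intended, should be read consistently with the $n\ne0$ case at $n=0$, where ${}_2F_1(\cdot;\cdot;\tfrac12;0)=1$.

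\textbf{Case $n^2>4\ell$.} Here $a=4\pi\sqrt\ell<b=2\pi|n|$, with $\alpha=\nu$ and $\beta=-\tfrac12$. Formula \eqref{equ:WS-neq} gives parameters $\frac{\nu+z}{2}$ and $\frac{\nu+z+1}{2}$, with $c=\nu+1$ and argument $4\ell/n^2$. Collecting the powers of $2$, $\pi$, $\ell$ and $|n|$ should produce $C_+$. This step is mechanical.

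\textbf{Case $0<n^2<4\ell$.} Now $a=2\pi|n|<b=4\pi\sqrt\ell$, so $\alpha=-\tfrac12$ and $\beta=\nu$. Formula \eqref{equ:WS-neq} gives
\[
{}_2F_1\left(\tfrac{\nu+z}{2},\tfrac{-\nu+z}{2};\tfrac12;\tfrac{n^2}{4\ell}\right)
\]
with a prefactor $\Gamma(\frac{\nu+z}{2})/\bigl(\Gamma(\tfrac12)\Gamma(\frac{\nu+1-z}{2})\bigr)$ times powers. The factor $|n|^{-1/2}$ from $a^{\alpha}$ cancels the $|n|^{1/2}$ in front, which is why $C_-$ is independent of $n$. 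Setting $n=0$ recovers the first case.

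\textbf{Equal case $n=\pm2m$, $\ell=m^2$.} Use \eqref{equ:WS-eq} with $\alpha=-\tfrac12$, $\beta=\nu$, $\lambda=\tfrac12-z$ and $a=2\pi m$. The denominators become $\Gamma(\frac{\nu+1-z}{2})$, $\Gamma(\frac{\nu-z}{2}+1)$ and $\Gamma(\frac{-\nu-z+1}{2})$, and $\Gamma(\lambda)=\Gamma(\tfrac12-z)$ appears in the numerator, as claimed.

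\textbf{Main obstacle.} The difficulty is bookkeeping rather than analysis. One must apply Lemma~\ref{lem:WS} with the correct ordering of $a$ and $b$, and track the powers of $2$ and $\pi$. One must also check that the lemma's hypotheses hold with $\beta=-\tfrac12$ or $\alpha=-\tfrac12$. If a constant disagrees, as in the $n=0$ case above, I would cross-check it against the continuity of the three formulas as $n^2\to4\ell$ via Gauss's ${}_2F_1(a,b;c;1)$ evaluation.
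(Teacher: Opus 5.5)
Your route is the paper's own: \eqref{equ:mellin-Bessel} for $n=0$, and $\cos x=\sqrt{\pi x/2}\,J_{-1/2}(x)$ followed by Lemma~\ref{lem:WS} for $n\neq 0$. Your convergence range, the $C_+$ case and the Gamma factors in the case $n^2=4\ell$ are correct. The gap is in the first two bullets, where your proposal contradicts itself.

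For $0<n^2<4\ell$ you assert that \eqref{equ:WS-neq} produces $\Gamma(\frac{\nu+1-z}{2})$ in the denominator. With $\alpha=-\frac12$, $\beta=\nu$, $\lambda=\frac12-z$ the factor is in fact $\Gamma(\frac{\beta-\alpha+\lambda+1}{2})=\Gamma(\frac{\nu-z}{2}+1)$. The constants collect as $2\pi|n|^{1/2}(2\pi|n|)^{-1/2}/(2^{1/2-z}(4\pi\sqrt{\ell})^{z}\Gamma(\frac12))=2^{-z}\pi^{-z}\ell^{-z/2}$. So Weber--Schafheitlin actually gives
\[
C_-(z;\ell,\nu)=2^{-z}\pi^{-z}\ell^{-z/2}\,\frac{\Gamma(\frac{\nu+z}{2})}{\Gamma(\frac{\nu-z}{2}+1)},
\]
which is exactly what your own $n=0$ computation from \eqref{equ:mellin-Bessel} produced. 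The discrepancy you flagged is therefore not a normalization to be ``read consistently''. The printed denominator $\Gamma(\frac{\nu+1-z}{2})$ in the $n=0$ and $n^2<4\ell$ formulas is a typo, and those two formulas are false as stated.

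Two quick confirmations settle this.
\begin{itemize}
\item For $\operatorname{Re}\nu>0$ and $z=0$, $\mathcal{J}^{\nu}_{0,\ell}(0)=2\int_0^\infty J_\nu(x)\,\frac{dx}{x}=\frac{2}{\nu}=\Gamma(\frac{\nu}{2})/\Gamma(\frac{\nu}{2}+1)$. The printed formula instead gives $\Gamma(\frac{\nu}{2})/\Gamma(\frac{\nu+1}{2})$, e.g.\ $\sqrt{\pi}$ instead of $2$ at $\nu=1$.
\item Your proposed Gauss cross-check gives the same verdict. One has ${}_2F_1(\frac{\nu+z}{2},\frac{z-\nu}{2};\frac12;1)=\Gamma(\frac12)\Gamma(\frac12-z)/(\Gamma(\frac{1-\nu-z}{2})\Gamma(\frac{1+\nu-z}{2}))$. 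Then $C_-$ times this equals the $n^2=4\ell$ formula precisely when $C_-$ carries $\Gamma(\frac{\nu-z}{2}+1)$.
\end{itemize}

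The paper's proof is the same one-line citation and passes over this point. Carried out, it yields the corrected denominator, which is also the one needed for the $D<0$ case of Lemma~\ref{lem:J-n-ell-fe}. A correct write-up should prove the first two bullets with $\Gamma(\frac{\nu-z}{2}+1)$ and state the correction explicitly, rather than claim that \eqref{equ:WS-neq} reproduces the printed constant.

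There is also a smaller slip in the equal case. The common argument is $a=2\pi|n|=4\pi\sqrt{\ell}=4\pi m$, not $2\pi m$. With $a=2\pi m$ the prefactor would come out as $2^{1/2}\pi^{1/2-z}m^{-z}$ rather than $2^{-z}\pi^{1/2-z}m^{-z}$.
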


\begin{proof}
The case $n=0$ follows from \eqref{equ:mellin-Bessel}. If $n\neq 0$, using $\cos(x)=\sqrt{\frac{\pi x}{2}}J_{-1/2}(x)$, we obtain
\[
   \mathcal{J}^{\nu}_{n,\ell}(z):=2\pi\sqrt{|n|}\int_{0}^{\infty}y^{z-\frac{1}{2}}J_{\nu}(4\pi\sqrt{\ell}y)J_{-\frac12}(2\pi |n|y)dy.
\]
This is an integral of the Weber–Schafheitlin type. Applying Lemma \ref{lem:WS} yields the desired results. For example, if $n^2-4\ell>0$, then we take
\[
    \alpha=\nu,\quad a=4\pi\sqrt{\ell},\quad \beta=-\frac{1}{2},\quad b=2\pi|n|y\quad\text{and}\quad \lambda=\frac{1}{2}-z.
\]
\end{proof}

From the explicit formulas, we can derive some useful functional equations for $\mathcal{J}_{n,\ell}(z)=\mathcal{J}^{2k-1}_{n,\ell}(z)$.

\begin{lemma}\label{lem:J-2m-m^2-fe}
For $m\geqslant 1$, we have
\begin{equation}\label{equ:J-2m-m^2-fe}
    2\pi i^{-2k}\zeta(1-2z)\mathcal{J}_{\pm 2m,m^2}(z)=\mathcal{G}_{\mu}(z)m^{-z}\zeta(2z).
\end{equation}
\end{lemma}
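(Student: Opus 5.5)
The identity reduces to a pure Gamma-function identity: insert the explicit formula for $\mathcal{J}_{\pm 2m,m^2}(z)$ from Lemma \ref{lem:J-2m-m^2-fe}'s predecessor, Lemma \ref{lem:J-nu-n-ell} (last case, $\nu=2k-1$), and trade $\zeta(1-2z)$ for $\zeta(2z)$ via the functional equation of $\zeta$. Both sides are meromorphic in $z$, so it suffices to verify the identity on the strip $0<\operatorname{Re}(z)<\frac12$, where the integral defining $\mathcal{J}_{\pm2m,m^2}$ converges.

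\textbf{Step 1: the zeta factor.} The completed zeta function $\Gamma_{\mathbb R}(s)\zeta(s)$ is invariant under $s\mapsto 1-s$. Written with $G_0(s)=\Gamma_{\mathbb R}(s)/\Gamma_{\mathbb R}(1-s)$, this says $\zeta(1-s)=G_0(s)\zeta(s)$. Taking $s=2z$ gives $\zeta(1-2z)=G_0(2z)\zeta(2z)$. The factor $\zeta(2z)$ then cancels from both sides, together with $m^{-z}$ (which appears explicitly in the formula of Lemma \ref{lem:J-nu-n-ell}). It remains to show
\[
2\pi i^{-2k}G_0(2z)\,2^{-z}\pi^{\frac12-z}\frac{\Gamma\bigl(\frac{\nu+z}{2}\bigr)\Gamma\bigl(\frac12-z\bigr)}{\Gamma\bigl(\frac{1-\nu-z}{2}\bigr)\Gamma\bigl(\frac{\nu-z}{2}+1\bigr)\Gamma\bigl(\frac{\nu-z+1}{2}\bigr)}
=G_0(z+\nu)G_0(z-\nu)G_0(z),\qquad \nu=2k-1 .
\]

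\textbf{Step 2: expand into Gamma functions.} Use $G_0(s)=\pi^{s-\frac12}\Gamma(\frac s2)/\Gamma(\frac{1-s}{2})$ throughout.
\begin{itemize}
\item On the left, $G_0(2z)=\pi^{2z-\frac12}\Gamma(z)/\Gamma(\frac12-z)$. The factor $\Gamma(\frac12-z)$ cancels at once.
\item Legendre duplication gives $\Gamma(z)=\pi^{-\frac12}2^{z-1}\Gamma(\frac z2)\Gamma(\frac{1+z}{2})$. This kills the $2^{-z}$ and turns the prefactor $2\pi$ into $\pi$.
\item On the right, the product of the three $G_0$'s contributes $\pi^{3z-\frac32}$, which should match the accumulated powers of $\pi$ on the left.
\end{itemize}
After cancelling the common factors $\Gamma(\frac z2)$, $\Gamma(\frac{\nu+z}{2})$, $\Gamma(\frac{1-\nu-z}{2})$ and $\Gamma(\frac{\nu-z+1}{2})$, the identity should reduce to
\[
i^{-2k}\,\Gamma\Bigl(\tfrac{1+z}{2}\Bigr)\Gamma\Bigl(\tfrac{1-z}{2}\Bigr)=\Gamma\Bigl(\tfrac{z-\nu}{2}\Bigr)\Gamma\Bigl(1-\tfrac{z-\nu}{2}\Bigr),
\]
up to a power of $\pi$ that I will check cancels.

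\textbf{Step 3: reflection and the sign.} By Euler reflection, the left product is $\pi/\cos(\pi z/2)$ and the right product is $\pi/\sin(\pi(z-\nu)/2)$. Since $\nu=2k-1$,
\[
\sin\Bigl(\tfrac{\pi z}{2}-\pi k+\tfrac{\pi}{2}\Bigr)=(-1)^k\cos\Bigl(\tfrac{\pi z}{2}\Bigr).
\]
The resulting factor $(-1)^k=i^{2k}$ exactly cancels $i^{-2k}$, which completes the verification.

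The main obstacle is pure bookkeeping: tracking the powers of $2$ and $\pi$ and the sign $i^{-2k}$ through duplication and reflection. The conceptual point that makes it work is that $\nu$ is an odd integer, which is what turns the sine into $\pm$ the cosine.
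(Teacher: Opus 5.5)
Your route is the paper's. You substitute the last case of Lemma \ref{lem:J-nu-n-ell} with $\nu=2k-1$, use $\zeta(1-2z)=G_0(2z)\zeta(2z)$ and Legendre duplication, and arrive at $i^{-2k}\Gamma(\frac{1+z}{2})\Gamma(\frac{1-z}{2})=\Gamma(-k+\frac{1+z}{2})\Gamma(k+\frac{1-z}{2})$. This is exactly the paper's final Gamma identity. You close it by Euler reflection, where the sine becomes $(-1)^k\cos(\pi z/2)$ because $\nu$ is odd, rather than by iterating $\Gamma(s+1)=s\Gamma(s)$. That is an equivalent and valid finish, and your sign check is correct.

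The one thing to fix is the power of $\pi$ you deferred, because as written that check would fail. Your expansion $G_0(s)=\pi^{s-\frac12}\Gamma(\frac s2)/\Gamma(\frac{1-s}{2})$ has the exponent with the wrong sign. From your own definition $G_0(s)=\Gamma_{\mathbb{R}}(s)/\Gamma_{\mathbb{R}}(1-s)$ one gets $\pi^{-s/2}\pi^{(1-s)/2}=\pi^{\frac12-s}$.

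With your version, the left side collects $\pi^{1}\cdot\pi^{2z-\frac12}\cdot\pi^{\frac12-z}\cdot\pi^{-\frac12}=\pi^{z+\frac12}$. These factors come from the prefactor $2\pi$, from $G_0(2z)$, from the formula for $\mathcal{J}$, and from duplication. The right side collects $\pi^{3z-\frac32}$, so the two do not match.

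With the correct expansion, the left side gives $\pi^{1}\cdot\pi^{\frac12-2z}\cdot\pi^{\frac12-z}\cdot\pi^{-\frac12}=\pi^{\frac32-3z}$. The three factors $G_0(z\pm\nu)G_0(z)$ on the right give $\pi^{\frac32-3z}$ as well, so these cancel.

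Relatedly, duplication turns $2\pi\,2^{-z}\Gamma(z)$ into $\pi^{\frac12}\Gamma(\frac z2)\Gamma(\frac{1+z}{2})$, not $\pi\,\Gamma(\frac z2)\Gamma(\frac{1+z}{2})$. With these corrections your argument is complete.
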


\begin{proof}
Substitute the explicit formula for $\mathcal{J}_{\pm2m,m^2}(z)$ from Lemma \ref{lem:J-nu-n-ell} and use $\zeta(1-s)=G_0(s)\zeta(s)$ and $\Gamma(z)=\pi^{-1/2}2^{z-1}\Gamma(\frac{z}{2})\Gamma(\frac{z+1}{2})$. The left-hand side becomes
\[
(-1)^k m^{-z} \pi^{\frac32-3z} \zeta(2z) \,
\frac{\Gamma(\frac{z}{2})\Gamma(\frac{z+1}{2})\Gamma(k-\frac12+\frac{z}{2})}
     {\Gamma(1-k-\frac{z}{2})\Gamma(k+\frac12-\frac{z}{2})\Gamma(k-\frac{z}{2})}.
\]

The right-hand side expands to
\[
m^{-z} \pi^{\frac32-3z} \zeta(2z) \,
\frac{\Gamma(k-\frac12+\frac{z}{2})\Gamma(\frac{z}{2})\Gamma(-k+\frac12+\frac{z}{2})}
     {\Gamma(-k+1-\frac{z}{2})\Gamma(\frac{1-z}{2})\Gamma(k-\frac{z}{2})}.
\]

Equality of the two expressions reduces to the identity
\[
\Gamma\left(\frac{1-z}{2}\right)\Gamma\left(\frac{1+z}{2}\right)
= (-1)^k \Gamma\left(k+\frac{1-z}{2}\right)\Gamma\left(-k+\frac{1+z}{2}\right),
\]
which follows readily from the functional equation $\Gamma(s+1)=s\Gamma(s)$. Hence \eqref{equ:J-2m-m^2-fe} holds.
\end{proof}

\begin{lemma}\label{lem:J-n-ell-fe}
For $n^2\neq4\ell$ and all $z$ where both sides are meromorphic,
\[
\mathcal{J}_{n,\ell}(z)=\mathcal{G}_{\mu}(z)\Xi_{n^2-4\ell}(z)^{-1}\mathcal{J}_{n,\ell}(1-z).
\]
\end{lemma}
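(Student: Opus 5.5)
The plan is to verify the identity directly from the closed forms in Lemma \ref{lem:J-nu-n-ell} with $\nu=2k-1$. First prove it in a vertical strip where both $\mathcal{J}_{n,\ell}(z)$ and $\mathcal{J}_{n,\ell}(1-z)$ are given by the hypergeometric formulas. Then extend it to all $z$ by analytic continuation, since both sides are meromorphic in $z$ via those formulas. Because $n^2\neq 4\ell$, there are two cases, $D=n^2-4\ell<0$ (including $n=0$) and $D>0$, matching $\delta_D=1$ and $\delta_D=0$ in $\Xi_D$. In each case the identity splits into a hypergeometric part and a pure Gamma/power part.

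\emph{Case $D<0$.} Put $x=n^2/(4\ell)\in[0,1)$, $a=\frac{\nu+z}{2}$, $b=\frac{z-\nu}{2}$, $c=\frac12$. Then $c-a=\frac{1-\nu-z}{2}$ and $c-b=\frac{1+\nu-z}{2}$, which are exactly the parameters of the hypergeometric factor of $\mathcal{J}_{n,\ell}(1-z)$. Euler's transformation therefore gives
\[
{}_2F_1(a,b;\tfrac12;x)=(1-x)^{\frac12-z}\,{}_2F_1(c-a,c-b;\tfrac12;x),\qquad 1-x=\frac{|D|}{4\ell}.
\]
Hence $\mathcal{J}_{n,\ell}(z)/\mathcal{J}_{n,\ell}(1-z)=\frac{C_-(z;\ell,\nu)}{C_-(1-z;\ell,\nu)}\bigl(|D|/4\ell\bigr)^{\frac12-z}$. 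The powers of $\ell$ cancel, leaving $|D|^{\frac12-z}$ times powers of $2\pi$ and a ratio of four Gamma factors. It then remains to check that
\[
\pi^{1-2z}\frac{\Gamma(\frac{\nu+z}{2})\Gamma(\frac{\nu+z}{2})}{\Gamma(\frac{\nu+1-z}{2})\Gamma(\frac{\nu+1-z}{2})}\Big/\,\text{(shifted)}
\]
matches $\mathcal{G}_\mu(z)\,\pi^{\frac12-z}\Gamma(\frac{1-z+1}{2})/\Gamma(\frac{z+1}{2})$. Concretely, write $\mathcal{G}_\mu(z)=\prod\Gamma_\R(\cdot)/\Gamma_\R(1-\cdot)$ at $z,z\pm(2k-1)$. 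Use the shift relation $\Gamma(s+1)=s\Gamma(s)$ to move $\Gamma_\R(z-2k+1)$ to $\Gamma_\R(z+2k-1)$, as in the proof of Lemma \ref{lem:J-2m-m^2-fe}; this produces a sign $(-1)^k$ against rational factors. The remaining $\Gamma_\R(z)/\Gamma_\R(1-z)$ then cancels against the $\delta_D=1$ factor of $\Xi_D(z)^{-1}$ after one application of the duplication formula.

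\emph{Case $D>0$.} Here $x=4\ell/n^2\in(0,1)$, and the hypergeometric factor has the quadratic form ${}_2F_1(a,a+\frac12;\nu+1;x)$ with $2a=\nu+z$. Apply the quadratic transformation (DLMF 15.8)
\[
{}_2F_1(a,a+\tfrac12;\nu+1;x)=\Bigl(\tfrac{1+\sqrt{1-x}}{2}\Bigr)^{-2a}{}_2F_1(2a,2a-\nu;\nu+1;\lambda),\qquad \lambda=\frac{1-\sqrt{1-x}}{1+\sqrt{1-x}},
\]
which is consistent with \eqref{equ:hyper-geo-square-root}. This turns the factor into ${}_2F_1(\nu+z,z;\nu+1;\lambda)$. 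Its Euler transform is $(1-\lambda)^{1-2z}{}_2F_1(\nu+1-z,1-z;\nu+1;\lambda)$, which is precisely the transformed factor for $1-z$. Combine $1-\lambda=2\sqrt{1-x}/(1+\sqrt{1-x})$ with the two prefactors $\bigl(\frac{1+\sqrt{1-x}}{2}\bigr)^{-(\nu+z)}$ and $\bigl(\frac{1+\sqrt{1-x}}{2}\bigr)^{-(\nu+1-z)}$. All powers of $1+\sqrt{1-x}$ cancel, leaving $(\sqrt{1-x})^{1-2z}=(\sqrt D/|n|)^{1-2z}$. Together with $|n|^{-\nu-z}/|n|^{-\nu-1+z}$ from $C_+$, this gives exactly $D^{\frac12-z}$. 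The remaining Gamma ratio $C_+(z)/C_+(1-z)$ must then be matched with $\mathcal{G}_\mu(z)\pi^{\frac12-z}\Gamma(\frac{1-z}{2})/\Gamma(\frac z2)$, again by reflection, duplication and the $k$-shift.

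The main obstacle is bookkeeping rather than ideas: checking the two Gamma-factor identities with the correct signs $(-1)^k$ and powers of $2$ and $\pi$. I would organize this by rewriting everything in terms of $\Gamma_\R$ and using $G_0(s)=\Gamma_\R(s)/\Gamma_\R(1-s)$, so that each side becomes a product of three ratios that can be compared factor by factor. A secondary point is making sure the strip of validity is nonempty, which needs $\operatorname{Re}(z)$ and $\operatorname{Re}(1-z)$ both in $(-\nu,\frac12)$ up to continuation. I would handle this by proving the identity for the meromorphic continuations given by the closed formulas, where Euler's and the quadratic transformations hold for all parameters since $0\le x<1$ and $0<\lambda<1$.
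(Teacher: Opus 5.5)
Your strategy matches the paper's: Euler's transformation reduces the claim to a comparison of Gamma prefactors, and the closed forms of Lemma \ref{lem:J-nu-n-ell} supply the meromorphic continuation. You are right that the convergence strips for $z$ and $1-z$ do not overlap. For $D>0$, however, the quadratic transformation is a detour. With $a=\frac{\nu+z}{2}$, $b=\frac{\nu+z+1}{2}$, $c=\nu+1$ one has $c-a-b=\frac12-z$ and $\{c-a,c-b\}=\{\frac{\nu+2-z}{2},\frac{\nu+1-z}{2}\}$. These are already the parameters at $1-z$, so Euler's transformation alone gives the factor $(D/n^2)^{\frac12-z}$. This is exactly the paper's step; your route through ${}_2F_1(\nu+z,z;\nu+1;\lambda)$ is correct but lands in the same place. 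The Gamma comparison there is also immediate, with no reflection, duplication or $k$-shift. Indeed $\Xi_D(z)^{-1}=D^{\frac12-z}\Gamma_\R(1-z)/\Gamma_\R(z)$ cancels the factor $G_0(z)$ of $\mathcal{G}_\mu(z)$, and what remains is literally $\frac{D^{1/2-z}}{|n|^{1-2z}}\,C_+(z)/C_+(1-z)$. Note that the power of $\pi$ coming from $\Xi_D(z)^{-1}$ is $\pi^{z-\frac12}$, not $\pi^{\frac12-z}$ as you wrote in both cases; with your sign the $\pi$-powers do not match.

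The real problem is $D<0$, which the paper only calls ``similar''.

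\begin{itemize}
\item With $C_-$ as stated in Lemma \ref{lem:J-nu-n-ell}, the ratio you set up is $\pi^{1-2z}|D|^{\frac12-z}\Gamma(\frac{\nu+z}{2})^2/\Gamma(\frac{\nu+1-z}{2})^2$. This is not $\mathcal{G}_\mu(z)\Xi_D(z)^{-1}$: at $n=0$, $\ell=1$, $k=2$, $z=0$ it equals $\pi^2/2$, while $\mathcal{G}_\mu(0)\Xi_{-4}(0)^{-1}=4\pi/3$.
\item So the Gamma identity you plan to verify is false, and your sketch hides this. For instance, $\Gamma_\R(z)/\Gamma_\R(1-z)$ times the $\delta_D=1$ factor $\Gamma_\R(2-z)/\Gamma_\R(1+z)$ of $\Xi_D(z)^{-1}$ is $\cot(\pi z/2)$, not $1$.
\item The culprit is the imported closed form. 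The denominator of $C_-$ (and of the $n=0$ formula) must be $\Gamma(1+\frac{\nu-z}{2})$. This follows from \eqref{equ:mellin-Bessel} at $n=0$, and from Lemma \ref{lem:WS} with $\alpha=-\frac12$, $\beta=\nu$, $\lambda=\frac12-z$.
\item With this correction the ratio becomes $\pi^{1-2z}|D|^{\frac12-z}\Gamma(\frac{\nu+z}{2})\Gamma(\frac{\nu+1+z}{2})/\bigl(\Gamma(\frac{\nu+1-z}{2})\Gamma(\frac{\nu+2-z}{2})\bigr)$.
\item Now use $\Gamma(1-\frac z2)/\Gamma(1-k-\frac z2)=(-1)^k\Gamma(k+\frac z2)/\Gamma(\frac z2)$. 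The comparison with $\mathcal{G}_\mu(z)\Xi_D(z)^{-1}$ then reduces exactly to $\Gamma(\frac{1-z}{2})\Gamma(\frac{1+z}{2})=(-1)^k\Gamma(k+\frac{1-z}{2})\Gamma(-k+\frac{1+z}{2})$, the identity used in the proof of Lemma \ref{lem:J-2m-m^2-fe}.
\end{itemize}

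You therefore need to rederive $C_-$ from Weber--Schafheitlin rather than quote it. Once you do, your plan goes through.
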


\begin{proof}
Assume $D=n^2-4\ell>0$; the case $D<0$ is similar. By Euler's transformation,
\[
\mathcal{J}_{n,\ell}(z)=\frac{D^{\frac12-z}}{|n|^{1-2z}}\cdot\frac{C_+(z;n,\ell,2k-1)}{C_+(1-z;n,\ell,2k-1)}\,\mathcal{J}_{n,\ell}(1-z).
\]
Hence it suffices to show
\[
\mathcal{G}_{\mu}(z)\Xi_D(z)^{-1}=\frac{D^{\frac12-z}}{|n|^{1-2z}}\cdot\frac{C_+(z;n,\ell,2k-1)}{C_+(1-z;n,\ell,2k-1)}.
\]
Substitute the explicit formulas for $\mathcal{G}_\mu$, $\Xi_D$, and $C_+$ and use the three formulas of $\Gamma(s)$ properly. One can verify that both sides equal
\[
\pi^{1-2z}D^{\frac12-z}\frac{\Gamma(k-\frac12+\frac{z}{2})\Gamma(-k+\frac12+\frac{z}{2})}{\Gamma(1-k-\frac{z}{2})\Gamma(k-\frac{z}{2})}.
\]
Thus the identity holds.
\end{proof}

The following lemma shows that a certain series involving quadratic $L$-functions and $\mathcal{J}_{n,\ell}$ has polynomial growth in vertical strips.

\begin{lemma}\label{lem:holomorphic-nonsingular-series}
Assume $k\geqslant 2$. For fixed $\ell\geqslant1$, define
\[
F_{\ell}(z) = \sum_{n^2\neq4\ell} \mathscr{L}_{n^2-4\ell}(1-z)\,\mathcal{J}_{n,\ell}(z).
\]
Then $F_{\ell}(z)$ is holomorphic in the strip $-\frac12<\operatorname{Re}(z)<\frac32$, and there exists a constant $A>0$ such that
\[
F_{\ell}(z) \ll_{\ell,k} (1+|\operatorname{Im}(z)|)^A \qquad (|\operatorname{Im}(z)|\to\infty)
\]
uniformly in that strip.
\end{lemma}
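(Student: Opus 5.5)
The plan is to show that the series defining $F_\ell(z)$ converges absolutely and uniformly on compact subsets of the strip, with a polynomial bound in $|\operatorname{Im}(z)|$ that is uniform in $n$. Only finitely many $n$ satisfy $n^2<4\ell$. For these the terms are handled by the explicit formula of Lemma~\ref{lem:J-nu-n-ell} (case $D<0$): a Gamma ratio times ${}_2F_1(\cdot;\tfrac12;n^2/4\ell)$ with argument in $[0,1)$. This is holomorphic for $\operatorname{Re}(z)>-(2k-1)$. The hypergeometric factor is polynomially bounded in $|\operatorname{Im} z|$ by the integral bound \eqref{equ:bound-hypergeo}, after an Euler transformation if needed. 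The factor $\mathscr{L}_D(1-z)$ has no pole here because $D$ is not a square, and it has polynomial growth by \eqref{equ:convexbound}. So the finite part is harmless, and everything reduces to the tail $|n|>2\sqrt{\ell}$, where $D=n^2-4\ell>0$.

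For the tail I use the $C_+$ formula
\[
\mathcal{J}_{n,\ell}(z)=C_+(z;n,\ell,2k-1)\,{}_2F_1\!\left(\tfrac{2k-1+z}{2},\tfrac{2k+z}{2};2k;\tfrac{4\ell}{n^2}\right).
\]
First, this expression is meromorphic in $z$, and its only possible poles come from $\Gamma(\frac{2k-1+z}{2})$, which sit at $\operatorname{Re}(z)\le 1-2k<-\tfrac12$. The other Gamma factors appear in the denominator. So each term is holomorphic throughout the strip $-\tfrac12<\operatorname{Re}(z)<\tfrac32$, including the region $\operatorname{Re}(z)\ge \tfrac12$ where the defining integral diverges; there the formula supplies the continuation.

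Second, for $|n|\ge |n_0|$ with $4\ell/n^2\le \tfrac12$, the hypergeometric series converges geometrically. Each coefficient is bounded by $(1+|z|)^{O(1)}$ times a geometric factor, so ${}_2F_1=1+O_{\ell}((1+|z|)^{2}n^{-2})$, uniformly on the strip. The intermediate range $2\sqrt\ell<|n|<|n_0|$ is finite, and on it the argument stays in $[0,1)$, so \eqref{equ:bound-hypergeo} again gives a polynomial bound.

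Third, Stirling's formula bounds the Gamma ratio in $C_+$ by $(1+|\operatorname{Im} z|)^{\operatorname{Re}(z)+2k-2}$, with at most a polynomial loss in bounded strips. This gives
\[
\mathcal{J}_{n,\ell}(z)\ll_{\ell,k}(1+|\operatorname{Im} z|)^{A}|n|^{-(2k-1)-\operatorname{Re}(z)}.
\]
By \eqref{equ:convexbound} at $s=1-z$,
\[
\mathscr{L}_D(1-z)\ll (1+|\operatorname{Im} z|)^{A}\,n^{\max(0,\operatorname{Re} z)+2\delta},
\]
since $|D|\le n^2$.

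Multiplying the two bounds, the $n$-exponent is at most
\[
-(2k-1)-\operatorname{Re}(z)+\max(0,\operatorname{Re} z)+2\delta\le -(2k-1)+\tfrac12+2\delta.
\]
For $k\ge2$ this is $\le -\tfrac52+2\delta<-1$, so the tail converges absolutely. The convergence is uniform on the strip with a polynomial bound in $|\operatorname{Im} z|$, and by Weierstrass $F_\ell$ is holomorphic there.

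The main obstacle is making the hypergeometric and Gamma estimates uniform in $z$ across the whole strip, especially near $\operatorname{Re}(z)=-\tfrac12$ and for the few $n$ with $4\ell/n^2$ close to $1$. I would treat these by the Euler integral bound \eqref{equ:bound-hypergeo} after an Euler transformation chosen to make $\operatorname{Re}(c)>\operatorname{Re}(b)>0$. I would also verify carefully that \eqref{equ:convexbound} is applied with the constant $A$ independent of $D$.
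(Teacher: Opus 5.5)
Your overall route is the paper's: the convexity bound \eqref{equ:convexbound} for $\mathscr{L}_{n^2-4\ell}(1-z)$, the explicit formulas of Lemma~\ref{lem:J-nu-n-ell} plus Stirling for $\mathcal{J}_{n,\ell}(z)$, and $2k-1\geqslant 3$ for convergence. However, there is a genuine gap at $z=0$.

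You exclude poles of $\mathscr{L}_D(1-z)$ only for $D<0$. For $D=n^2-4\ell>0$ you apply \eqref{equ:convexbound} as if it held throughout the strip. But $D$ is a nonzero square for every factorization $\ell=ab$ with $a<b$: take $|n|=a+b$, so $D=(b-a)^2$ (e.g.\ $n=\ell+1$ for any $\ell\geqslant 2$). For these finitely many $n$, $\mathscr{L}_D(1-z)$ has a simple pole at $z=0$ with residue $-1$, and \eqref{equ:convexbound} explicitly excludes that point. So ``each term is holomorphic, hence $F_\ell$ is holomorphic by Weierstrass'' does not follow.

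The missing idea, which is the final step of the paper's proof, is that $\mathcal{J}_{n,\ell}(0)=0$ whenever $n^2>4\ell$. The denominator factor $1/\Gamma\bigl(\frac{1-(2k-1)-z}{2}\bigr)=1/\Gamma\bigl(1-k-\frac z2\bigr)$ of $C_+(z;n,\ell,2k-1)$, which you noticed but did not use, vanishes at $z=0$ because $1-k\in\mathbb{Z}_{\leqslant 0}$. Meanwhile $\Gamma\bigl(\frac{2k-1+z}{2}\bigr)$ and the ${}_2F_1$ at $4\ell/n^2<1$ are finite there, so this simple zero cancels the pole.

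This is not cosmetic: it depends on the Bessel order $2k-1$ being an odd integer. For the Maass kernel, $\mathcal{J}_{n,\ell,t}(0)=\frac1\pi\bigl[(a/b)^{it}+(b/a)^{it}\bigr]$ does not vanish identically (Lemma~\ref{lem:J-n-ell-t-0}). The analogous series then has a pole at $z=0$ in general (Lemmas~\ref{lem:Jnlellh-growth} and~\ref{lem:I-ell-0}), and its residue produces the $\widetilde g(1)$ term in Theorem~\ref{thm:maass-voronoi}. Your argument, run verbatim there, would miss it.

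A secondary issue, on which the paper's proof is equally terse: the tools you cite do not give a bound polynomial in $|\operatorname{Im} z|$ uniformly in $n$.

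\begin{itemize}
\item For fixed $j$, the $j$-th Taylor coefficient of ${}_2F_1\bigl(\frac{2k-1+z}{2},\frac{2k+z}{2};2k;x\bigr)$ grows like $|z|^{2j}$. So the claim ${}_2F_1=1+O\bigl((1+|z|)^2n^{-2}\bigr)$ is justified only for $|z|\ll|n|$.
\item Bounding the Euler integral \eqref{equ:bound-hypergeo} in absolute value costs the prefactor $|\Gamma(c)/(\Gamma(b)\Gamma(c-b))|\asymp|\operatorname{Im} z|^{1-2k}e^{\pi|\operatorname{Im} z|/2}$, which is exponential, not polynomial.
\item For $D<0$, where $c=\frac12$, no Euler or Pfaff transformation produces a numerator parameter with $0<\operatorname{Re}(b)<\frac12$, so \eqref{equ:bound-hypergeo} does not even apply.
\end{itemize}

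You need to exploit oscillation instead. For example, evaluate the defining integral by Mellin--Parseval, using $\int_0^\infty y^{w-1}\cos(2\pi|n|y)\,dy=(2\pi|n|)^{-w}\Gamma(w)\cos\frac{\pi w}{2}$. Then shift the Bessel-side contour past the pole of $\Gamma\bigl(\frac{2k-1+s}{2}\bigr)$ at $s=1-2k$. This gives $\mathcal{J}_{n,\ell}(z)\ll(1+|\operatorname{Im} z|)^{A}|n|^{1-2k-\operatorname{Re} z}$ for $\operatorname{Re} z<\frac12$. Finally, reach $\operatorname{Re} z<\frac32$ via Lemma~\ref{lem:J-n-ell-fe} and Phragm\'en--Lindel\"of.
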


\begin{proof}
We employ two estimates. First, by the convexity bound \eqref{equ:convexbound}, there exists $A_1>0$ (independent of $n$), such that for any $\delta>0$,
\[
\mathscr{L}_{n^2-4\ell}(1-z) \ll_\delta (1+|\operatorname{Im}(z)|)^{A_1} |n^2-4\ell|^{\max(0,\frac{\operatorname{Re}(z)}{2})+\delta},
\]
away from the possible simple pole at $z=0$, which only exists when $n^2-4\ell$ is a square. For fixed $\ell$, there are only finitely many $n$ such that $n^2-4\ell$ is a square.

Second, from Lemma \ref{lem:J-nu-n-ell}, \eqref{equ:bound-hypergeo} and Stirling's formula, we see that there exists $A_2>0$ (independent of $n$) such that
\[
\mathcal{J}_{n,\ell}(z) \ll_{\ell,k} (1+|\operatorname{Im}(z)|)^{A_2} |n|^{1-2k-\operatorname{Re}(z)}
\]
holds uniformly for every $n$ such that $n^2\neq 4\ell$.

Combining these, for $z$ in any compact subset of $(-\frac12,\frac32)$ avoiding $z=0$, we have
\[
\mathscr{L}_{n^2-4\ell}(1-z)\mathcal{J}_{n,\ell}(z) \ll_{\ell,k,\delta} (1+|\operatorname{Im}(z)|)^A |n|^{1-2k-\operatorname{Re}(z)+2\max(0,\frac{\operatorname{Re}(z)}{2})+\delta},
\]
where $A=A_1+A_2$ is independent of $n$. Since $k\ge2$ and $-\frac12<\operatorname{Re}(z)<\frac32$, the exponent of $|n|$ is $<-1$ for sufficiently small $\delta>0$. Hence the series converges absolutely and defines a holomorphic function there, with the claimed polynomial growth.

For the finitely many square discriminants $n^2-4\ell$, the potential simple pole of $\mathscr{L}_{n^2-4\ell}(1-z)$ at $z=0$ is cancelled by the zero of $1/\Gamma(1-k-z/2)$ appearing in $\mathcal{J}_{n,\ell}(z)$. Thus $F_{\ell}(z)$ is holomorphic throughout the strip.
\end{proof}

\subsection{The Maass form case}

\subsubsection{The archimedean transforms \texorpdfstring{$\mathcal{T}_{\mu_t}$}{T\_{mu\_t}}}

For the Maass case, the spectral parameter is $t\in\mathbb{R}\cup i(0,\tfrac12)$ (or $t_u$ for a discrete Maass form). Set $\mu_t=(2it,0,-2it)$.

\begin{definition}\label{def:T-mu-t}
For $g\in\mathcal{S}$, define $\mathcal{T}_{\mu_t}g$ by
\[
\widetilde{\mathcal{T}_{\mu_t}g}(s)=\widetilde{g}(1-s)\mathcal{G}_{\mu_t}(s),\qquad
\mathcal{G}_{\mu_t}(s)=G_0(s+2it)G_0(s)G_0(s-2it),
\]
or equivalently,
\[
(\mathcal{T}_{\mu_t}g)(y)=\frac1{2\pi i}\int_{(\sigma)} \widetilde{g}(1-s)\mathcal{G}_{\mu_t}(s)y^{-s}ds.
\]
The contour $\operatorname{Re}(s)=\sigma$ is chosen as follows:
\begin{itemize}
   \item If $t\in\mathbb{R}$, any $\sigma>0$ works (the integrand is holomorphic for $\operatorname{Re}(s)>0$).
   \item If $t=ir$ with $0<r<\tfrac12$, take $\sigma>2r$ to stay to the right of the poles at $s=0$ and $s=2r$.
\end{itemize}
\end{definition}

The function $\mathcal{G}_{\mu_t}(s)$ is meromorphic, and it is holomorphic for $\operatorname{Re}(s)>0$. It satisfies $\mathcal{G}_{\mu_t}(s)\mathcal{G}_{\mu_t}(1-s)=1$ and grows polynomially in $|\operatorname{Im}(s)|$ on vertical strips.

\begin{lemma}\label{lem:T_mu_t_props}
Let $t\in\mathbb{R}\cup i(0,\tfrac12)$ and $g\in\mathcal{S}$. Then $\mathcal{T}_{\mu_t}g$ is smooth on $(0,\infty)$ and satisfies:
\begin{itemize}
   \item \textbf{Rapid decay at infinity:} For every $A>0$ and $m\ge0$,
         \[
         (\mathcal{T}_{\mu_t}g)^{(m)}(y)\ll y^{-A}\qquad (y\to\infty).
         \]
   \item \textbf{Good behaviour near zero:} Let $\delta=|\operatorname{Im}(t)|\ge0$. Then as $y\to0^+$,
         \[
         \mathcal{T}_{\mu_t}g(y)=O(y^{-2\delta}),\quad
         (\mathcal{T}_{\mu_t}g)'(y)=O(y^{-1-2\delta}),\quad
         (\mathcal{T}_{\mu_t}g)''(y)=O(y^{-2-2\delta}).
         \]
\end{itemize}
\end{lemma}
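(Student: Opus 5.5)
The plan is to argue entirely on the Mellin side, as in the proof of Lemma \ref{lem:Tmug-props}. Write $F_t(s)=\widetilde g(1-s)\mathcal{G}_{\mu_t}(s)$. Since $\widetilde g$ is entire and essentially bounded, and $\mathcal{G}_{\mu_t}$ grows only polynomially in $|\operatorname{Im}(s)|$ on vertical strips (Stirling), $F_t$ decays faster than any power of $|\operatorname{Im}(s)|$ on every vertical strip away from the poles of $\mathcal{G}_{\mu_t}$. Differentiating under the integral in Definition \ref{def:T-mu-t} gives
\[
(\mathcal{T}_{\mu_t}g)^{(m)}(y)=\frac{(-1)^m}{2\pi i}\int_{(\sigma)}s(s+1)\cdots(s+m-1)F_t(s)\,y^{-s-m}\,ds .
\]
This justifies smoothness, because the integrand stays absolutely integrable after inserting the polynomial factor. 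For the decay as $y\to\infty$, we shift the contour to $\operatorname{Re}(s)=A$ with $A$ large. No poles are crossed, since $\mathcal{G}_{\mu_t}$ is holomorphic for $\operatorname{Re}(s)>2\delta$. This gives $(\mathcal{T}_{\mu_t}g)^{(m)}(y)\ll y^{-A-m}$.

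For $y\to0^+$ we locate the poles of $\mathcal{G}_{\mu_t}$. We have $G_0(w)=\Gamma_{\mathbb R}(w)/\Gamma_{\mathbb R}(1-w)$, which has simple poles at $w=0,-2,-4,\dots$. So $\mathcal{G}_{\mu_t}$ has poles at $s=-2j$ and $s=\mp2it-2j$, $j\ge0$, and the rightmost of these satisfy $\operatorname{Re}(s)=2\delta$ or $0$.

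\emph{Case $t=ir$, $0<r<\tfrac12$.} Take $\sigma$ slightly larger than $2r$, and shift the contour to $\operatorname{Re}(s)=-1$. This crosses three simple poles, at $s=2r$, $s=0$ and $s=-2r$. The residue at $s=s_0$ of $s\cdots(s+m-1)F_t(s)y^{-s-m}$ is a constant times $y^{-s_0-m}$. The dominant residue is the one at $s_0=2r=2\delta$, which gives $O(y^{-2\delta-m})$. The remaining integral is $O(y^{1-m})$. This proves the three bounds (for $m=0,1,2$).

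\emph{Case $t\in\mathbb R$, $t\neq0$.} The poles $0,\pm2it$ are distinct and simple on the line $\operatorname{Re}(s)=0$. Shifting to $\operatorname{Re}(s)=-1$ picks up residues of size $|y^{-s_0-m}|=y^{-m}$, which is exactly $O(y^{-2\delta-m})$ with $\delta=0$.

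\emph{Case $t=0$.} This is the step I expect to be the real obstacle, because here the three poles collide. We have $\mathcal{G}_{\mu_0}=G_0^3$, which has a pole of order $3$ at $s=0$. Its residue against $y^{-s}$ produces terms $c_2\log^2y+c_1\log y+c_0$. The leading coefficient is a nonzero multiple of $\widetilde g(1)$, so for general $g$ this gives $\mathcal{T}_{\mu_0}g(y)\asymp\log^2(1/y)$ rather than $O(1)$. Similarly the first derivative is of size $y^{-1}\log(1/y)$: the factor $s$ lowers the pole order to $2$. The second derivative is of size $y^{-2}$: the factor $s(s+1)$ lowers it to $1$.

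I would therefore first try to recover the stated $O(y^{-m})$ at $t=0$ by checking whether $\widetilde g(1-s)$ or the prefactor forces the $\log$ terms to cancel. That cancellation would need $\widetilde g(1)=0$ (and $\widetilde g'(1)=0$), which is not assumed. Failing that, the honest conclusion at $t=0$ is the slightly weaker bound $(\mathcal{T}_{\mu_0}g)^{(m)}(y)=O\bigl(y^{-m}\log^{2}(2/y)\bigr)$. This can be obtained either from the residue computation, or by keeping the contour at $\operatorname{Re}(s)=\epsilon$, which gives $O(y^{-m-\epsilon})$. Either form suffices wherever the lemma is later used, since it only feeds absolute convergence of $\sum_n A_t(n,1)\mathcal{T}_{\mu_t}g(n)$ and integrability in $t$. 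I would record this modification of the $t=0$ clause explicitly rather than assert the literal $O(1)$ there.
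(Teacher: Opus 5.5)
Your argument is the paper's: Mellin inversion, differentiation under the integral, a rightward shift for the decay at infinity, and a leftward shift past the poles of $\mathcal{G}_{\mu_t}$ for the behaviour at $0$. For $t=ir$ and for real $t\neq 0$ it is correct. Stopping at $\operatorname{Re}(s)=-1$ is cleaner than the paper's line $\operatorname{Re}(s)=\sigma_0\in(-3,-2-2\delta)$.

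Your objection at $t=0$ is correct, and it is exactly what the paper's proof misses. The paper asserts that the poles at $s=0$ and $s=\pm 2it$ are simple, but at $t=0$ they merge into the triple pole of $G_0(s)^3\sim 8s^{-3}$. This gives $\mathcal{T}_{\mu_0}g(y)=4\widetilde g(1)\log^2 y+O(|\log y|)$ as $y\to0^+$, so the stated bound $O(1)$ fails whenever $\widetilde g(1)\neq 0$. The statement as written is therefore false at $t=0$, and recording the modified bound there, as you propose, is the right fix.

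There is one slip in your analysis. The factor $s(s+1)$ has only a simple zero at $s=0$, so for $m=2$ the pole still has order $2$, not $1$. Hence $(\mathcal{T}_{\mu_0}g)''(y)$ is of size $y^{-2}\log(1/y)$ when $\widetilde g(1)\neq0$, not $y^{-2}$. Your corrected bound $O\bigl(y^{-m}\log^2(2/y)\bigr)$ still covers this case, and for $m\ge 1$ the single factor $\log(2/y)$ already suffices.
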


\begin{proof}
Recall that $\widetilde{\mathcal{T}_{\mu_t}g}(s)=\widetilde{g}(1-s)\mathcal{G}_{\mu_t}(s)$. Since $g\in\mathcal{S}$, $\widetilde{g}$ decays rapidly on vertical strips, while $\mathcal{G}_{\mu_t}$ has polynomial growth.

\emph{Rapid Decay at infinity.} Shift the contour to $\operatorname{Re}(s)=A+m+1$. Then
\[
(\mathcal{T}_{\mu_t}g)^{(m)}(y)=\frac{(-1)^m}{2\pi i}\int_{(A+m+1)} s\cdots(s+m-1)\widetilde{g}(1-s)\mathcal{G}_{\mu_t}(s)y^{-s-m}ds,
\]
and $|y^{-s-m}|=y^{-A-2m-1}$. The integrand decays rapidly, giving $(\mathcal{T}_{\mu_t}g)^{(m)}(y)\ll y^{-A}$.

\emph{Good behaviour near zero.} Shift the contour to $\operatorname{Re}(s)=\sigma_0\in(-3,-2-2\delta)$. The function $\mathcal{G}_{\mu_t}(s)$ has simple poles at $s=0,-2,-4,\dots$ and $s=\pm2it-2n$ ($n\ge0$), all with real part $\le2\delta$. Collecting residues at poles with $\operatorname{Re}(w)>-\frac52$ yields the stated bounds, while the remaining contour integrals are of lower order.
\end{proof}

These transforms are the archimedean building blocks for the Voronoi summation formulas in Theorems \ref{thm:holo-voronoi} and \ref{thm:maass-voronoi}.

\subsubsection{The Maass kernel \texorpdfstring{$\mathcal{J}_{n,\ell,t}(z)$}{J\_\{n,ℓ,t\}(z)}}
For the Maass case, we replace the Bessel function $J_{2k-1}(x)$ by the kernel
\[
\mathbb{J}_t(x)=\frac{i}{\pi}\frac{t}{\cosh(\pi t)}\bigl(J_{2it}(x)-J_{-2it}(x)\bigr),
\]
where $t\in\mathbb{R}\cup i{0,\frac12}$.

Define
\begin{equation}\label{equ:def-J-n-ell-t}
\mathcal{J}_{n,\ell,t}(z)=2\int_{0}^{\infty}y^{z-1}\mathbb{J}_t(4\pi\sqrt{\ell}y)\cos(2\pi ny)dy.
\end{equation}
Since $\mathbb{J}_t$ is a linear combination of $J_{\pm2it}$, we obtain
\[
    \mathcal{J}_{n,\ell,t}(z)=\frac{i}{\pi}\frac{t}{\cosh(\pi t)}\big(\mathcal{J}_{n,\ell}^{2it}(z)-\mathcal{J}_{n,\ell}^{-2it}(z)\big).
\]
By the explicit computation of $\mathcal{J}_{n,\ell}^{\nu}(z)$, we have
\begin{lemma}
\begin{itemize}
\item If $n^2\neq 4\ell$, the function $\mathcal{J}_{n,\ell,t}(z)$ is meromorphic when $-\frac12<\mathrm{Re}(z)<\frac32$ with simple poles at $z=\pm 2it$ and admits the functional equation
\[
\mathcal{J}_{n,\ell,t}(z)=\mathcal{G}_{\mu_t}(z)\Xi_{n^2-4\ell}(z)^{-1}\mathcal{J}_{n,\ell,t}(1-z),
\] 
\item If $\ell=m^2$ is a perfect square, the function $\mathcal{J}_{\pm 2m,m^2,t}(z)$ is meromorphic when $-\frac12<\mathrm{Re}(z)<\frac32$ with simple poles at $z=\pm 2it$ and $z=\frac{1}{2}$.
\end{itemize} 
\end{lemma}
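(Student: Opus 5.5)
The plan is to reduce everything to the explicit formulas of Lemma \ref{lem:J-nu-n-ell} with $\nu=\pm 2it$, using linearity:
\[
\mathcal{J}_{n,\ell,t}(z)=\frac{i}{\pi}\frac{t}{\cosh(\pi t)}\big(\mathcal{J}^{2it}_{n,\ell}(z)-\mathcal{J}^{-2it}_{n,\ell}(z)\big).
\]
The integrals defining $\mathcal{J}^{\pm 2it}_{n,\ell}$ converge absolutely only for $|\operatorname{Re}(z)|$ small, namely $\mp\operatorname{Re}(2it)<\operatorname{Re}(z)<\frac12$. So first we work in a small strip where both converge. There we take the closed forms as definitions of the meromorphic continuation to $-\frac12<\operatorname{Re}(z)<\frac32$: each is a product of Gamma ratios and powers with a ${}_2F_1$ whose argument $n^2/4\ell$ or $4\ell/n^2$ lies in $[0,1)$. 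That hypergeometric factor is entire in $z$, since the lower parameter is $\frac12$ or $1+\nu$, and $1+\nu=1\pm2it$ is never a non-positive integer for $t\in\R\cup i(0,\frac12)$.

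\textbf{Poles for $n^2\neq4\ell$.} In both $C_-$ and $C_+$ the only numerator Gamma factor is $\Gamma(\frac{\nu+z}{2})$. In the strip it has poles only at $z=-\nu$, which gives $z=\mp2it$ from the two values of $\nu$. The denominator factors $1/\Gamma(\cdot)$ are entire. So $\mathcal{J}_{n,\ell,t}$ has at most simple poles at $z=\pm2it$. When $t$ is imaginary with $|2it|<1$ these lie in the strip; the other poles $z=-\nu-2j$ with $j\ge1$ fall to the left of $\operatorname{Re}(z)=-\frac12$.

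\textbf{Functional equation for $n^2\neq4\ell$.} We copy the proof of Lemma \ref{lem:J-n-ell-fe} for general $\nu$. Take $D>0$ first. Euler's transformation applied to ${}_2F_1(\frac{\nu+z}{2},\frac{\nu+z+1}{2};\nu+1;x)$ gives
\[
(1-x)^{\frac12-z}\,{}_2F_1\!\left(\tfrac{\nu+1-z}{2},\tfrac{\nu+2-z}{2};\nu+1;x\right),
\]
which is exactly the hypergeometric of $\mathcal{J}^\nu_{n,\ell}(1-z)$. Hence
\[
\mathcal{J}^\nu_{n,\ell}(z)=\frac{D^{\frac12-z}}{|n|^{1-2z}}\frac{C_+(z;n,\ell,\nu)}{C_+(1-z;n,\ell,\nu)}\,\mathcal{J}^\nu_{n,\ell}(1-z).
\]
The key observation is that this prefactor must equal $\mathcal{G}_{\mu_t}(z)\Xi_D(z)^{-1}$ and is even in $\nu$. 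This lets the two terms $\nu=\pm2it$ combine with a common factor.

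We verify it by a Gamma computation. Apply duplication to $G_0(z\pm\nu)=\Gamma_\R(z\pm\nu)/\Gamma_\R(1-z\mp\nu)$ and $G_0(z)$, then reflection, so that $\Gamma(\frac{\nu+z}{2})/\Gamma(\frac{1-\nu-z}{2})$ is matched against $\Gamma(\frac{\nu+1-z}{2})/\Gamma(\frac{\nu+z}{2})$ from $C_+(1-z)$. The factor $\Gamma_\R(z)/\Gamma_\R(1-z)$ is absorbed by $\Xi_D^{-1}$. The case $D<0$ is the same computation with $C_-$, the hypergeometric ${}_2F_1(\frac{\nu+z}{2},\frac{z-\nu}{2};\frac12;x)$, and $\delta_D=1$. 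Since the identity holds for each $\nu=\pm2it$ with the same $\nu$-even prefactor, subtracting gives the claimed functional equation for $\mathcal{J}_{n,\ell,t}$, first on the common domain and then by meromorphic continuation.

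\textbf{Singular case $\ell=m^2$, $n=\pm2m$.} The last formula of Lemma \ref{lem:J-nu-n-ell} has numerator $\Gamma(\frac{\nu+z}{2})\Gamma(\frac12-z)$, which gives poles at $z=-\nu$, that is $z=\mp 2it$, and at $z=\frac12$. The remaining poles $z=\frac32,\dots$ of $\Gamma(\frac12-z)$ lie outside the strip. The residue at $z=\frac12$ of the difference is proportional to
\[
\frac{\Gamma(\frac{\nu+1/2}{2})}{\Gamma(\frac{1/2-\nu}{2})\Gamma(\frac{\nu+3/2}{2})\Gamma(\frac{\nu+1/2}{2})}=\frac{1}{\Gamma(\frac14-\frac\nu2)\Gamma(\frac34+\frac\nu2)}=\frac{\sin\pi(\frac14-\frac\nu2)}{\pi}.
\]
This is not odd in $\nu$, so the residue of $\mathcal{J}_{\pm2m,m^2,t}$ at $\frac12$ is generically nonzero and the pole genuinely occurs.

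\textbf{Main obstacle.} The delicate step is the Gamma bookkeeping in the functional equation: showing the prefactor equals $\mathcal{G}_{\mu_t}(z)\Xi_D(z)^{-1}$ and is symmetric under $\nu\to-\nu$, for both signs of $D$. A secondary point is justifying that the continued closed forms in the overlap strip agree with the integral definition.
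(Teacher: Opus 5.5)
\textbf{The case $D<0$ has a genuine gap.} Your route (the closed forms of Lemma~\ref{lem:J-nu-n-ell} with $\nu=\pm2it$, Euler's transformation, Gamma bookkeeping) is the one the paper indicates. Your pole analysis, the case $D=n^2-4\ell>0$, and the singular case $n=\pm2m$, $\ell=m^2$ are fine. For $D>0$ the Euler prefactor is $D^{\frac12-z}\pi^{1-2z}\Gamma(\frac{z+\nu}{2})\Gamma(\frac{z-\nu}{2})/\bigl(\Gamma(\frac{1-z-\nu}{2})\Gamma(\frac{1-z+\nu}{2})\bigr)$. This is even in $\nu$ and equals $\mathcal{G}_{\mu_t}(z)\Xi_D(z)^{-1}$ directly from $G_0(s)=\Gamma_\R(s)/\Gamma_\R(1-s)$.

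The problem is your claim that $D<0$ is ``the same computation.'' For $D<0$ the identity is false for each Bessel order separately unless $\nu$ is an odd integer. Start from the correct constant $C_-(z;\ell,\nu)=(2\pi\sqrt\ell)^{-z}\Gamma(\frac{\nu+z}{2})/\Gamma(1+\frac{\nu-z}{2})$. The denominator printed in Lemma~\ref{lem:J-nu-n-ell} is a misprint; compare \eqref{equ:mellin-Bessel} at $n=0$. Euler's transformation then gives
\[
\mathcal{J}^{\nu}_{n,\ell}(z)=|D|^{\frac12-z}(2\pi)^{1-2z}\frac{\Gamma(\nu+z)}{\Gamma(\nu+1-z)}\,\mathcal{J}^{\nu}_{n,\ell}(1-z).
\]
This prefactor is not even in $\nu$. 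It equals $\mathcal{G}_{\mu_t}(z)\Xi_D(z)^{-1}$ times $\tan(\frac{\pi z}{2})\sin(\frac{\pi(z-\nu)}{2})/\cos(\frac{\pi(z+\nu)}{2})$, and that extra factor is identically $1$ in $z$ only when $\cos(\frac{\pi\nu}{2})=0$. This is why Lemma~\ref{lem:J-n-ell-fe} (with $\nu=2k-1$) can say ``similar.'' For $\nu=\pm2it$, however, the two orders carry different multipliers, so subtracting the two termwise identities yields no functional equation for $\mathcal{J}_{n,\ell,t}$. Already $n=0$ shows this.

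\textbf{The missing step is to combine the two orders before transforming.} For $D<0$ the factor ${}_2F_1(\frac{z+\nu}{2},\frac{z-\nu}{2};\frac12;\frac{n^2}{4\ell})$ is invariant under $\nu\mapsto-\nu$. So it factors out of $\mathcal{J}^{2it}_{n,\ell}-\mathcal{J}^{-2it}_{n,\ell}$, and the reflection formula gives
\[
C_-(z;\ell,\nu)-C_-(z;\ell,-\nu)=-\frac{2}{\pi}\sin\Bigl(\frac{\pi\nu}{2}\Bigr)(2\pi\sqrt\ell)^{-z}\cos\Bigl(\frac{\pi z}{2}\Bigr)\Gamma\Bigl(\frac{z+\nu}{2}\Bigr)\Gamma\Bigl(\frac{z-\nu}{2}\Bigr).
\]
Euler's transformation on the common ${}_2F_1$ contributes $(|D|/4\ell)^{\frac12-z}$. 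Multiplying by the ratio of the displayed prefactor at $z$ and at $1-z$ gives
\[
|D|^{\frac12-z}\pi^{1-2z}\cot\Bigl(\frac{\pi z}{2}\Bigr)\frac{\Gamma(\frac{z+\nu}{2})\Gamma(\frac{z-\nu}{2})}{\Gamma(\frac{1-z+\nu}{2})\Gamma(\frac{1-z-\nu}{2})}.
\]
This is exactly $\mathcal{G}_{\mu_t}(z)\Xi_D(z)^{-1}$, because $G_0(z)\Xi_D(z)^{-1}=|D|^{\frac12-z}\cot(\frac{\pi z}{2})$ when $\delta_D=1$. So for $D<0$ the functional equation is a property of the combination $\mathbb{J}_t$, not of the individual $J_{\pm2it}$. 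Your proof needs this step in place of the termwise argument.
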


\begin{proof}
The proof follows Lemma \ref{lem:J-nu-n-ell}: apply Lemma \ref{lem:WS} to each Bessel order, use Euler's transformation, and combine gamma factors.
\end{proof}

\begin{lemma}\label{lem:J-2m-m^2-t-fe}
For $m\ge1$ and $t\in\mathbb{R}$,
\[
\mathcal{J}_{\pm2m,m^2,t}(z)=\frac{t\tanh(\pi t)}{\pi^2}\,m^{-z}\,\mathcal{G}_{\mu_t}(z)\,G_0(2z)^{-1}.
\]
\end{lemma}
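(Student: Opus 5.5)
The plan is to reduce the statement to the explicit formula of Lemma \ref{lem:J-nu-n-ell} in the equal-argument case, applied with $\nu=\pm 2it$. Since $\operatorname{Re}(\pm 2it)=0$, that lemma applies for $0<\operatorname{Re}(z)<\frac12$. Both sides are meromorphic in $z$, so it suffices to prove the identity in this strip and then extend by analytic continuation. By the decomposition recorded just before the preceding lemma,
\[
\mathcal{J}_{\pm2m,m^2,t}(z)=\frac{i}{\pi}\frac{t}{\cosh(\pi t)}\bigl(\mathcal{J}^{2it}_{\pm2m,m^2}(z)-\mathcal{J}^{-2it}_{\pm2m,m^2}(z)\bigr),
\]
so the task is to rewrite each $\mathcal{J}^{\nu}_{\pm2m,m^2}(z)$ in terms of the $G_0$ factors.

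\textbf{Step 1: normalize $\mathcal{J}^\nu_{\pm2m,m^2}(z)$ for a single $\nu$.} Apply Legendre duplication to $\Gamma\bigl(\frac{\nu-z}{2}+1\bigr)\Gamma\bigl(\frac{\nu-z+1}{2}\bigr)$, which gives $\pi^{1/2}2^{z-\nu}\Gamma(1+\nu-z)$. Next, using $G_0(s)=\Gamma_{\mathbb R}(s)/\Gamma_{\mathbb R}(1-s)$, write
\[
\Gamma\Bigl(\frac{\nu+z}{2}\Bigr)\Big/\Gamma\Bigl(\frac{1-\nu-z}{2}\Bigr)=\pi^{\nu+z-1/2}G_0(z+\nu).
\]
Then remove $1/\Gamma(1+\nu-z)$ by the reflection formula, $1/\Gamma(1+\nu-z)=\Gamma(z-\nu)\sin(\pi(z-\nu))/\pi$. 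Finally replace $(2\pi)^{\nu}\Gamma(z-\nu)$ by $(2\pi)^z G_0(z-\nu)/\bigl(2\cos(\pi(z-\nu)/2)\bigr)$. Since $\sin(\pi w)/(2\cos(\pi w/2))=\sin(\pi w/2)$, the target form is
\[
\mathcal{J}^{\nu}_{\pm2m,m^2}(z)=E(z)\,m^{-z}\,G_0(z+\nu)G_0(z-\nu)\sin\Bigl(\frac{\pi(z-\nu)}{2}\Bigr),
\]
where $E(z)$ is an explicit factor involving only $\Gamma(\frac12-z)$ and powers of $2$ and $\pi$. The key point is that $E(z)$ does not depend on $\nu$; all powers $2^{\pm\nu}$ and $\pi^{\pm\nu}$ must cancel, and I would check this carefully.

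\textbf{Step 2: take the difference.} Since $G_0(z+\nu)G_0(z-\nu)$ is symmetric in $\nu$, subtracting the $\nu=-2it$ term from the $\nu=2it$ term produces
\[
\sin\Bigl(\frac{\pi(z-\nu)}2\Bigr)-\sin\Bigl(\frac{\pi(z+\nu)}2\Bigr)=-2\cos\Bigl(\frac{\pi z}{2}\Bigr)\sin\Bigl(\frac{\pi\nu}{2}\Bigr).
\]
With $\nu=2it$ we have $\sin(i\pi t)=i\sinh(\pi t)$. Combined with the prefactor $\frac{i}{\pi}\frac{t}{\cosh(\pi t)}$, this yields the constant $t\tanh(\pi t)/\pi$ up to sign and a power of $\pi$.

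\textbf{Step 3: identify the remaining factor.} What remains is the $z$-dependent factor $E(z)\cos(\pi z/2)$, which must be shown to equal a constant multiple of $G_0(z)/G_0(2z)$. To do this, write $G_0(z)=2(2\pi)^{-z}\Gamma(z)\cos(\pi z/2)$ and $G_0(2z)=2(2\pi)^{-2z}\Gamma(2z)\cos(\pi z)$. Then apply duplication to $\Gamma(2z)$ and reflection to $\Gamma(\frac12-z)\Gamma(\frac12+z)=\pi/\cos(\pi z)$. After this, the quotient should collapse to an absolute constant, which should combine with the constant from Step 2 to give exactly $1/\pi^2$ with the correct sign.

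The main obstacle is this bookkeeping of constants and signs: the powers of $2$ and $\pi$, the sign coming from $i\cdot i$, and the verification that no residual $\nu$-dependence survives in $E(z)$. Everything else is mechanical, so I would first test the final identity at a convenient point, for instance $z=\frac14$, to confirm the normalization $t\tanh(\pi t)/\pi^2$ before writing the full chain.
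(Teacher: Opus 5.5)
Your argument is correct and is essentially the paper's proof. Both start from the equal-argument Weber--Schafheitlin evaluation of $\mathcal{J}^{\pm 2it}_{\pm2m,m^2}(z)$ and use only reflection, duplication and $\sin(A-B)-\sin(A+B)=-2\cos A\sin B$. The only difference is the order: the paper subtracts the two terms first and applies reflection to the bracket, whereas you first normalize each term to $E(z)m^{-z}G_0(z+\nu)G_0(z-\nu)\sin(\pi(z-\nu)/2)$ and then subtract.

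The bookkeeping you were worried about closes:
\begin{itemize}
\item $E(z)=2^{-z}\pi^{z-3/2}\Gamma(\tfrac12-z)$, which is indeed $\nu$-free;
\item Step 2 gives the factor $\tfrac{2t\tanh(\pi t)}{\pi}$;
\item $E(z)\cos(\tfrac{\pi z}{2})=\tfrac{1}{2\pi}\,G_0(z)/G_0(2z)$.
\end{itemize}
Together these yield exactly $t\tanh(\pi t)/\pi^2$.
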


\begin{proof}
By Lemma \ref{lem:J-nu-n-ell}, 
\[
\mathcal{J}_{\pm2m,m^2}^{\nu}(z)-\mathcal{J}_{\pm2m,m^2}^{-nu}(z)=2^{-z}\pi^{\frac{1}{2}-z}m^{-z}
        \frac{\Gamma\bigl(\frac12-z\bigr)}
             {\Gamma\bigl(\frac{-\nu-z+1}{2}\bigr)\Gamma\bigl(\frac{\nu-z+1}{2}\bigr)} \left[\frac{\Gamma\bigl(\frac{\nu+z}{2}\bigr)}
             {\Gamma\bigl(\frac{\nu-z}{2}+1\bigr)}-\frac{\Gamma\bigl(\frac{-\nu+z}{2}\bigr)}
             {\Gamma\bigl(\frac{-\nu-z}{2}+1\bigr)}\right].
\]
Substitute $\nu=2it$ and use $\Gamma(s)\Gamma(1-s)=\pi/\sin(\pi s)$, we have
\[
\frac{\Gamma(it+\frac{z}{2})}{\Gamma(it-\frac{z}{2}+1)}-\frac{\Gamma(-it+\frac{z}{2})}{\Gamma(-it-\frac{z}{2}+1)}=-2i\sinh(\pi t)\frac{\Gamma(it+\frac{z}{2})\Gamma(it-\frac{z}{2})}{\Gamma(\frac{1-z}{2})\Gamma(\frac{1+z}{2})}.
\]
Plugging this into $\mathcal{J}_{2m,m^2,t}(z)$ and using the duplication formula for $\Gamma(z)$ yields
\[
G_0(2z)\mathcal{J}_{2m,m^2,t}(z)=t\tanh(\pi t)\cdot\pi^{-\frac12-3z}m^{-z}\frac{\Gamma(\frac{z}{2})\Gamma(it+\frac{z}{2})\Gamma(it-\frac{z}{2})}{\Gamma(-it-\frac{z}{2}+\frac12)\Gamma(it-\frac{z}{2}+\frac12)\Gamma(\frac{1-z}{2})}.
\]
Comparing with $\mathcal{G}_{\mu_t}(z)=\pi^{\frac32-3z}\frac{\Gamma(\frac{z}{2})\Gamma(it+\frac{z}{2})\Gamma(it-\frac{z}{2})}{\Gamma(-it-\frac{z}{2}+\frac12)\Gamma(it-\frac{z}{2}+\frac12)\Gamma(\frac{1-z}{2})}$ gives the desired identity.
\end{proof}

\begin{lemma}\label{lem:J-n-ell-t-0}
Let $d^2=n^2-4\ell$ with $d\ge0$, and let $(a,b)$ be the unique positive integers with $|n|=a+b$, $d=b-a$, $ab=\ell$, $a\le b$. Then
\[
\mathcal{J}_{n,\ell,t}(0)=\frac{1}{\pi}\left[\left(\frac{a}{b}\right)^{it}+\left(\frac{b}{a}\right)^{it}\right].
\]
\end{lemma}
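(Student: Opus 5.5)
The plan is to evaluate $\mathcal{J}_{n,\ell,t}(0)$ directly from the closed forms in Lemma \ref{lem:J-nu-n-ell}, using the decomposition
\[
\mathcal{J}_{n,\ell,t}(z)=\frac{i}{\pi}\frac{t}{\cosh(\pi t)}\bigl(\mathcal{J}^{2it}_{n,\ell}(z)-\mathcal{J}^{-2it}_{n,\ell}(z)\bigr).
\]
The point $z=0$ lies on the boundary $\operatorname{Re}(z)=-\operatorname{Re}(\nu)$ of the region of absolute convergence when $\nu=\pm 2it$. I will therefore interpret $\mathcal{J}^{\pm 2it}_{n,\ell}(0)$ as the value at $z=0$ of the meromorphic continuation given by the explicit formulas. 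For $t\neq 0$ this is legitimate because the only poles near $0$ are at $z=\pm 2it$. The case $t=0$ then follows by continuity in $t$, since both sides of the claimed identity are continuous there.

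\textbf{Case $d>0$, i.e.\ $n^2-4\ell>0$.} Put $x=4\ell/n^2$. At $z=0$ the hypergeometric factor is ${}_2F_1(\frac{\nu}{2},\frac{\nu+1}{2};\nu+1;x)$, which is exactly of the form \eqref{equ:hyper-geo-square-root}. Since $\sqrt{1-x}=d/|n|$ and $|n|+d=2b$, this factor equals
\[
\Bigl(\frac{2|n|}{|n|+d}\Bigr)^{\nu}=\Bigl(\frac{|n|}{b}\Bigr)^{\nu}.
\]
Combining it with the power part $2^{\nu}\ell^{\nu/2}|n|^{-\nu}$ of $C_+(0;n,\ell,\nu)$ and using $\ell=ab$ gives $2^{\nu}(a/b)^{\nu/2}$.

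The remaining gamma part of $C_+(0;n,\ell,\nu)$ is $\pi^{1/2}\Gamma(\frac{\nu}{2})/\bigl(\Gamma(\nu+1)\Gamma(\frac{1-\nu}{2})\bigr)$. I will simplify it in three steps. First write $\Gamma(\nu+1)=\nu\Gamma(\nu)$. Then apply Legendre duplication to $\Gamma(\nu)$. Finally apply the reflection formula to $\Gamma(\frac{1+\nu}{2})\Gamma(\frac{1-\nu}{2})$. The gamma part should collapse to $2^{1-\nu}\cos(\pi\nu/2)/\nu$, so that
\[
\mathcal{J}^{\nu}_{n,\ell}(0)=\frac{2\cos(\pi\nu/2)}{\nu}\Bigl(\frac{a}{b}\Bigr)^{\nu/2}.
\]

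\textbf{Case $d=0$, i.e.\ $n=\pm 2m$, $\ell=m^2$, $a=b=m$.} Here I will use the last formula of Lemma \ref{lem:J-nu-n-ell} at $z=0$. The same duplication and reflection manipulations should give $2\cos(\pi\nu/2)/\nu$. This agrees with the general formula, since $a/b=1$.

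\textbf{Conclusion.} Substitute $\nu=\pm 2it$ and use $\cos(\pm i\pi t)=\cosh(\pi t)$. This gives
\[
\mathcal{J}^{2it}_{n,\ell}(0)-\mathcal{J}^{-2it}_{n,\ell}(0)=\frac{\cosh(\pi t)}{it}\Bigl[\Bigl(\frac{a}{b}\Bigr)^{it}+\Bigl(\frac{b}{a}\Bigr)^{it}\Bigr].
\]
Multiplying by $\frac{i}{\pi}\frac{t}{\cosh(\pi t)}$ yields the stated value. The only delicate point is justifying evaluation at the boundary point $z=0$ via continuation, including the separate treatment of $t=0$. The gamma-function bookkeeping is routine.
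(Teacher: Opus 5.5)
Your computation is correct for $t\neq 0$ and is the paper's proof. You evaluate the closed forms of Lemma~\ref{lem:J-nu-n-ell} at $z=0$ and collapse the hypergeometric factor via \eqref{equ:hyper-geo-square-root} to $(|n|/b)^{\nu}$. You then reduce the gamma factors to $C_+(0;n,\ell,\nu)=2\ell^{\nu/2}|n|^{-\nu}\cos(\pi\nu/2)/\nu$, which at $\nu=2it$ is exactly the paper's $\ell^{it}|n|^{-2it}\cosh(\pi t)/(it)$. Your $d=0$ evaluation $\mathcal{J}^{\nu}_{\pm 2m,m^2}(0)=2\cos(\pi\nu/2)/\nu$ reproduces the paper's value $2/\pi$. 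Reading $z=0$ as a point of the meromorphic continuation is also right, since the defining integral diverges there when $\operatorname{Re}\nu=0$; the paper leaves this implicit.

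You should drop the claim that $t=0$ follows by continuity, because it is false.
\begin{itemize}
\item Since $\mathbb{J}_0\equiv 0$, one has $\mathcal{J}_{n,\ell,0}\equiv 0$, whereas the right-hand side equals $2/\pi$ at $t=0$. So the identity genuinely fails there, and no argument can give it.
\item The left-hand side is discontinuous in $t$ at $0$ because the two poles $z=\pm 2it$ collide at $z=0$. To leading order near $(t,z)=(0,0)$ one has $\mathcal{J}_{n,\ell,t}(z)\approx 8t^2/\bigl(\pi(z^2+4t^2)\bigr)$. This tends to $2/\pi$ along $z=0$ but vanishes identically along $t=0$.
\item The lemma should therefore be stated for $t\neq 0$; the paper's proof also divides by $it$. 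This costs nothing, since in Lemma~\ref{lem:I-ell-0} the identity is only used inside $\int h(t)\,dt$.
\end{itemize}
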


\begin{proof}
If $d=0$, then $a=b$, $|n|=2a$, $\ell=a^2$. Direct evaluation using Lemma \ref{lem:J-nu-n-ell} gives $\mathcal{J}_{\pm2a,a^2,t}(0)=2/\pi$, which matches the formula since $(a/b)^{it}=1$.

If $d>0$, from Lemma \ref{lem:J-nu-n-ell} we have
\[
\mathcal{J}_{\pm n,\ell,t}(0)=\frac{it}{\pi\cosh(\pi t)}\Bigl(C_{+}(0;n,\ell,2it)\,{}_2F_1(\cdots)-C_{+}(0;n,\ell,-2it)\,{}_2F_1(\cdots)\Bigr),
\]
where $C_{+}(0;n,\ell,2it)=\ell^{it}|n|^{-2it}\frac{\cosh(\pi t)}{it}$. Using \eqref{equ:hyper-geo-square-root},
\[
{}_2F_1\left(it,it+\tfrac12;1+2it;\tfrac{4\ell}{n^2}\right)=\left(\tfrac{2}{1+\frac{b-a}{a+b}}\right)^{2it}=\left(\tfrac{a+b}{b}\right)^{2it}.
\]
Substituting yields the stated result.
\end{proof}

\subsubsection{The average kernel \texorpdfstring{$\mathcal{J}_{n,\ell,h}(z)$}{J\_\{n,ℓ,h\}(z)}}

Let $h(t)$ be a Kuznetsov-admissible test function with $h(\pm i/2)=0$. Define
\begin{equation}\label{equ:J-n-ell-h-defn}
\mathcal{J}_{n,\ell,h}(z)=\int_{-\infty}^{\infty}h(t)\,\mathcal{J}_{n,\ell,t}(z)\,dt.
\end{equation}
By Lemma \ref{lem:J-nu-n-ell}, this integral is meromorphic for $0<\operatorname{Re}(z)<\frac32$, with a possible simple pole at $z=\frac12$ when $\ell$ is a perfect square and $n^2=4\ell$.

Exchanging the order of integration gives, for $0<\operatorname{Re}(z)<\frac12$,
\[
\mathcal{J}_{n,\ell,h}(z)=2\int_0^{\infty}y^{z-1}\mathcal{J}_h(4\pi\sqrt{\ell}y)\cos(2\pi ny)dy,
\]
where $\mathcal{J}_h(y)=\int_{-\infty}^{\infty}h(t)\mathbb{J}_t(y)dt$. The absolute convergence justifies the exchange in this region.

\begin{lemma}\label{lem:Jh-estimate}
Let $h$ be holomorphic for $|\operatorname{Im}(t)|<1+\varepsilon_0$ with $h(\pm i/2)=0$. Then for every $\alpha\in(2,2+2\varepsilon_0)$,
\[
\mathcal{J}_h^{(j)}(x)\ll x^{\alpha-j}\qquad (x\to0^+),\quad j=0,1,2.
\]
\end{lemma}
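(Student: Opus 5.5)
We bound $\mathcal{J}_h(x)=\int_\R h(t)\mathbb{J}_t(x)\,dt$ near $0$ by shifting the $t$-contour so that only Bessel functions of large positive order remain. By \eqref{equ:mathbbJ},
\[
\mathcal{J}_h(x)=\frac{i}{\pi}\int_{\R}\frac{t\,h(t)}{\cosh(\pi t)}J_{2it}(x)\,dt-\frac{i}{\pi}\int_{\R}\frac{t\,h(t)}{\cosh(\pi t)}J_{-2it}(x)\,dt .
\]
Substituting $t\mapsto -t$ in the second integral and using that $h$ is even, the two pieces coincide. Hence
\[
\mathcal{J}_h(x)=\frac{2i}{\pi}\int_{\R}\frac{t\,h(t)}{\cosh(\pi t)}J_{2it}(x)\,dt .
\]

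Next we move the line of integration from $\operatorname{Im}(t)=0$ to $\operatorname{Im}(t)=-\alpha/2$, where $\alpha\in(2,2+2\varepsilon_0)$. This stays inside the holomorphy strip $|\operatorname{Im}(t)|<1+\varepsilon_0$ of $h$. On the new line $t=\tau-i\alpha/2$, so $2it=\alpha+2i\tau$ and $\operatorname{Re}(2it)=\alpha$.

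The integrand has no other singularities than the zeros of $\cosh(\pi t)$, i.e. $t=-i(j+\tfrac12)$. The only one crossed is $t=-i/2$, since $\alpha/2<1+\varepsilon_0<3/2$ once $\varepsilon_0<1/2$; for larger $\varepsilon_0$ we simply restrict to $\alpha<3$, which is all the argument needs. The pole at $t=-i/2$ is cancelled by the hypothesis $h(\pm i/2)=0$, because $\cosh(\pi t)$ has only a simple zero there. Hence no residue term appears.

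The horizontal segments at $\operatorname{Re}(t)=\pm T$ tend to $0$ as $T\to\infty$. Indeed, $h(t)\ll|t|^{-A}$ and $1/\cosh(\pi t)\ll e^{-\pi|\operatorname{Re} t|}$. For fixed $x$, $J_{2it}(x)$ grows at most like $e^{\pi|\operatorname{Re} t|}$ times a polynomial, as one sees from the series and Stirling's formula. The exponentials cancel, and the polynomial growth is absorbed by $h$.

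On the shifted line we use the series for $J_\nu$, or the stated bound $J_\nu(x)\ll \frac{1}{|\Gamma(\nu+1)|}(x/2)^{\operatorname{Re}\nu}$, made uniform for $0<x\le1$ by factoring out $(x/2)^\nu$. This gives
\[
J_{\alpha+2i\tau}(x)\ll \frac{(x/2)^{\alpha}}{|\Gamma(\alpha+1+2i\tau)|}\sum_{m\ge0}\frac{(x/2)^{2m}}{m!\,|(\alpha+1+2i\tau)_m|}\ll \frac{x^{\alpha}}{|\Gamma(\alpha+1+2i\tau)|}.
\]
By Stirling, $1/|\Gamma(\alpha+1+2i\tau)|\asymp|\tau|^{-\alpha-1/2}e^{\pi|\tau|}$. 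Together with $|t|/|\cosh(\pi t)|\ll|\tau|e^{-\pi|\tau|}$ the exponentials cancel, and the rapid decay of $h$ makes the $\tau$-integral converge. Therefore $\mathcal{J}_h(x)\ll x^{\alpha}$.

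For the derivatives $j=1,2$ we use the recurrence $2J_\nu'(x)=J_{\nu-1}(x)-J_{\nu+1}(x)$, or differentiate the power series termwise. Both give
\[
\frac{d^j}{dx^j}J_{\alpha+2i\tau}(x)\ll \frac{|\alpha+2i\tau|^j\,x^{\alpha-j}}{|\Gamma(\alpha+1+2i\tau)|},
\]
uniformly for $0<x\le1$. The extra polynomial factor in $\tau$ is again absorbed by $h$, so $\mathcal{J}_h^{(j)}(x)\ll x^{\alpha-j}$. Differentiation under the integral sign is justified by the same dominated bounds.

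The main obstacle is keeping the Bessel bounds uniform in the imaginary part of the order, so that the factor $e^{\pi|\tau|}$ from $1/\Gamma$ is exactly cancelled by $1/\cosh(\pi t)$ on the shifted line. A secondary point is checking that $t=-i/2$ is the only zero of $\cosh(\pi t)$ crossed; this is precisely where the condition $h(\pm i/2)=0$ is used.
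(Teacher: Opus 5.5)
Your proof is correct and is essentially the paper's argument. The paper keeps the $J_{2it}$ and $J_{-2it}$ pieces separate and shifts them to $\operatorname{Im}(t)=-\alpha/2$ and $\operatorname{Im}(t)=\alpha/2$ respectively, while you merge them first using evenness of $h$ and shift once; the estimate ($e^{\pi|\tau|}$ from $1/\Gamma$ cancelled by $1/\cosh(\pi t)$, polynomial factors absorbed by $h$) is the same. Your caveat $\alpha<3$ is genuinely needed when $\varepsilon_0>1/2$: if $h(3i/2)\neq 0$, the pole of $1/\cosh(\pi t)$ at $t=-3i/2$ contributes a nonzero multiple of $h(3i/2)J_3(x)\asymp x^3$, a point the paper's remark that $h(\pm i/2)=0$ ``removes the poles in the strip'' passes over, and this is harmless since later steps only use some $\alpha>2$.
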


\begin{proof}
Set $H(t)=th(t)/\cosh(\pi t)$. The condition $h(\pm i/2)=0$ removes the poles of $H(t)$ in the strip. For $0<x\le1$, shift the contour in the $J_{2it}$-integral to $\operatorname{Im}(t)=-\alpha/2$ and the $J_{-2it}$-integral to $\operatorname{Im}(t)=\alpha/2$:
\[
\mathcal{J}_h(x)=\frac{i}{\pi}\left(\int_{-\infty}^{\infty}H(t-i\alpha/2)J_{\alpha+2it}(x)dt-\int_{-\infty}^{\infty}H(t+i\alpha/2)J_{\alpha-2it}(x)dt\right).
\]
The new Bessel orders have real part $\alpha$, so $J_{\alpha\pm2it}(x)\ll x^\alpha(1+|t|)^Be^{\pi|t|}$ for some $B>0$. Since $h$ is of rapid decays, we have $H(t\pm i\alpha/2)\ll_{h,N}(1+|t|)^{-B-2}e^{-\pi|t|}$. So $\mathcal{J}_h(x)\ll x^\alpha$. The derivative bounds can be obtained by differentiating.
\end{proof}

From Lemma \ref{lem:Jh-estimate} and the integral representation, $\mathcal{J}_{n,\ell,h}(z)$ extends meromorphically to $-2-2\varepsilon<\operatorname{Re}(z)<\frac32$, with a possible simple pole at $z=\frac12$ when $\ell$ is a perfect square and  $n^2=4\ell$.

\begin{lemma}\label{lem:Jnlellh-growth}
Assume $h$ is even, holomorphic and rapidly decaying for $|\operatorname{Im}(t)|<1+\varepsilon$, with $h(\pm i/2)=0$. For fixed $\ell\ge1$, define
\[
F_{\ell,h}(z)=\sum_{n^2\neq4\ell}\mathscr{L}_{n^2-4\ell}(1-z)\,\mathcal{J}_{n,\ell,h}(z).
\]
Then $F_{\ell,h}(z)$ is meromorphic in $-\frac12<\operatorname{Re}(z)<\frac32$, with at most a simple pole at $z=0$, and has polynomial growth in $|\operatorname{Im}(z)|$ away from $z=0$.
\end{lemma}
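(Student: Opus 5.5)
We would model the argument on Lemma \ref{lem:holomorphic-nonsingular-series}. The plan is to establish two bounds that are uniform in $n$, namely a convexity bound for $\mathscr{L}_{n^2-4\ell}(1-z)$ and a decay bound for $\mathcal{J}_{n,\ell,h}(z)$. Summability over $n$ then gives holomorphy away from the finitely many singular terms, and we treat those separately.

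For the $\mathscr{L}$-factor we use \eqref{equ:convexbound} exactly as in the holomorphic case. In $-\frac12<\operatorname{Re}(z)<\frac32$ it gives
\[
\mathscr{L}_{n^2-4\ell}(1-z)\ll (1+|\operatorname{Im} z|)^{A_1}|n|^{\max(0,\operatorname{Re} z)+2\delta}
\]
away from $z=0$. The pole at $z=0$ occurs only for the finitely many $n$ with $n^2-4\ell$ a nonzero square.

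For the kernel we need decay in $|n|$, and we would get it by integrating by parts in $y$. In the representation
\[
\mathcal{J}_{n,\ell,h}(z)=2\int_0^\infty y^{z-1}\mathcal{J}_h(4\pi\sqrt\ell y)\cos(2\pi ny)\,dy,
\]
we integrate by parts twice against $\cos(2\pi n y)$. Each integration gains a factor $|n|^{-1}$ and costs one derivative of $y^{z-1}\mathcal{J}_h(4\pi\sqrt\ell y)$. Lemma \ref{lem:Jh-estimate} gives $\mathcal{J}_h^{(j)}(x)\ll x^{\alpha-j}$ with $\alpha>2$ for $j\le 2$, so the boundary terms at $0$ vanish when $\operatorname{Re} z>-\alpha+2$, that is, in a neighbourhood of the strip including its left part. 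At infinity, $\mathcal{J}_h$ and its derivatives decay like $x^{-1/2}$, since the Bessel asymptotics hold uniformly after integrating against the rapidly decaying $h$. This makes the integrals absolutely convergent once $\operatorname{Re} z<\frac32$.

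The catch is oscillation. At $y\approx |n|/(2\sqrt\ell)$ the phase of $\mathcal{J}_h$ resonates with $\cos(2\pi ny)$; this is exactly the $n^2=4\ell$ singularity. For $n^2\neq 4\ell$ we avoid it more cleanly by working spectrally. We write $\mathcal{J}_{n,\ell,h}(z)=\int h(t)\mathcal{J}_{n,\ell,t}(z)\,dt$ and use the explicit formulas of Lemma \ref{lem:J-nu-n-ell} with $\nu=\pm 2it$. For $|n|>2\sqrt\ell$, the $C_+$ formula together with \eqref{equ:bound-hypergeo} gives
\[
\mathcal{J}^{\pm2it}_{n,\ell}(z)\ll (1+|t|+|z|)^{B}e^{\pi|t|}|n|^{-\operatorname{Re} z}.
\]
That alone is not summable. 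So, as in Lemma \ref{lem:Jh-estimate}, we first shift the $t$-contour to $\operatorname{Im} t=\mp\alpha/2$, which turns the orders into $\pm(\alpha\pm2it)$ with real part $\alpha\in(2,2+2\varepsilon)$. The poles of $t/\cosh(\pi t)$ at $t=\pm i/2$ are killed by $h(\pm i/2)=0$. After the shift the $C_+$ formula yields
\[
\mathcal{J}_{n,\ell,h}(z)\ll (1+|\operatorname{Im} z|)^{A_2}|n|^{-\alpha-\operatorname{Re} z},
\]
where the hypergeometric factor is bounded because $4\ell/n^2\le 4\ell/(4\ell+1)<1$. Combining this with the bound on $\mathscr{L}$, the general term is $\ll |n|^{-\alpha+\delta'}$ when $\operatorname{Re} z\ge0$ and $\ll|n|^{-\alpha-\operatorname{Re} z}$ when $\operatorname{Re} z<0$. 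Both are summable, since $\alpha>2$ and $\operatorname{Re} z<\frac12$ in the second case. The finitely many $n$ with $n^2<4\ell$ contribute meromorphic terms, by the $C_-$ formula and the meromorphic continuation established above.

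It remains to check the singular terms. At $z=0$, the only poles come from $\mathscr{L}_{d^2}(1-z)$ with $d\ge1$. Unlike in the holomorphic case, $\mathcal{J}_{n,\ell,h}(0)=\int h(t)\mathcal{J}_{n,\ell,t}(0)\,dt$ is nonzero by Lemma \ref{lem:J-n-ell-t-0}, so a simple pole genuinely survives. We also have to rule out poles from $\mathcal{J}_{n,\ell,t}$ at $z=\pm2it$: after integrating in $t$, these become the continuation issue already settled by Lemma \ref{lem:Jh-estimate}, so $\mathcal{J}_{n,\ell,h}$ is holomorphic for $n^2\neq4\ell$. Polynomial growth then follows from uniformity of the bounds in vertical strips.

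The main obstacle is making the contour-shifted $C_+$ and hypergeometric bounds uniform in $n$, $t$ and $\operatorname{Im} z$ simultaneously. This is where Stirling's formula has to be applied carefully.
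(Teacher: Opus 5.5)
Your plan matches the paper's, whose proof of this lemma is only ``similar to Lemma \ref{lem:holomorphic-nonsingular-series}''. You also supply exactly what ``similar'' hides. With $\nu=\pm2it$ the $C_+$ formula gives only $|n|^{-\operatorname{Re}z}$, and the $t$-shift of Lemma \ref{lem:Jh-estimate} to $\operatorname{Im}t=\mp\alpha/2$ produces the summable factor $|n|^{-\alpha-\operatorname{Re}z}$.

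Make the shift while $\operatorname{Re}z>0$, so the poles of $\Gamma(\frac{\nu+z}{2})$ at $\nu=-z$ are not crossed. The shifted integral is then the continuation to $\operatorname{Re}z>-\alpha$; the real-$t$ integral is not the continuation once $\operatorname{Re}z<0$. With bounds uniform in $n$ for $z$ in compact sets, this proves meromorphy with at most a simple pole at $z=0$.

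One inherited correction: \eqref{equ:convexbound} is too optimistic for $\operatorname{Re}s<0$. For $1<\operatorname{Re}z<\frac32$ the correct bound is $|n|^{2\operatorname{Re}z-1+2\delta}$, but $\alpha>2$ still leaves a summable exponent.

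\textbf{The gap: polynomial growth in $\operatorname{Im}z$.} This does not follow from \eqref{equ:bound-hypergeo} and Stirling. Put $\nu=\alpha+2it$ and $z=x+iy$. The prefactor $\Gamma(\nu+1)/\bigl(\Gamma(\frac{\nu+z+1}{2})\Gamma(\frac{\nu+1-z}{2})\bigr)$ in \eqref{equ:bound-hypergeo} has size $e^{-\pi|t|+\frac{\pi}{2}(|t+y/2|+|t-y/2|)}$ up to polynomial factors. Combine it with the $e^{\pi|t|}$ from $C_+$ and the $e^{-\pi|t|}$ from $H(t-i\alpha/2)$. The resulting bound for the $t$-integrand is $e^{\frac{\pi}{2}\max(0,|y|-2|t|)}$, times polynomial factors and $|h(t-i\alpha/2)|\,|n|^{-\alpha-x}$. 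So the range $|t|\le 1$ alone gives $e^{\pi|\operatorname{Im}z|/2}$.

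Two of your stated bounds share this defect. Your pre-shift bound $(1+|t|+|z|)^Be^{\pi|t|}$ is missing the same factor. Your statement that ``the hypergeometric factor is bounded'' holds only uniformly in $n$, not in $t$ or $\operatorname{Im}z$.

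The loss comes from taking absolute values inside the Euler integral, whose oscillating phase cancels the growth of the Gamma ratio. More care with Stirling cannot recover it. The growth is an artifact: for $-\alpha<\operatorname{Re}z<\frac12$ the $y$-integral converges absolutely, so $\mathcal J_{n,\ell,h}(z)$ is actually bounded in $\operatorname{Im}z$ there.

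The point matters. Step 4 of the Maass case shifts contours against $\widetilde g(1-z)$, which for general $g\in\mathcal S$ decays only faster than every polynomial. The paper's proof of Lemma \ref{lem:holomorphic-nonsingular-series}, to which it defers, rests on the same estimate; for $\nu=2k-1$ the prefactor is again of size $e^{\pi|\operatorname{Im}z|/2}$. So you inherit the defect rather than introduce it.

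\textbf{What a repair needs.} A correct bound must keep the oscillation. In $2-\alpha<\operatorname{Re}z<\frac12$, your discarded integration by parts works: doing it twice, using Lemma \ref{lem:Jh-estimate} at $0$ and $x^{-1/2}$ decay at infinity, gives $\mathcal J_{n,\ell,h}(z)\ll_{\ell,h}(1+|z|)^2|n|^{-2}$ for $n\neq0$, uniformly. This suffices on that part of the strip. It does not reach $\operatorname{Re}z<\frac32$ as you claimed, because $\mathcal J_h'$ and $\mathcal J_h''$ still decay only like $x^{-1/2}$. The region $\frac12\le\operatorname{Re}z<\frac32$, which Step 4 also crosses, needs an argument your proposal does not provide.
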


\begin{proof}
Similar to Lemma \ref{lem:holomorphic-nonsingular-series}.
\end{proof}

The averaged kernel satisfies a functional equation.

\begin{lemma}\label{lem:Jnlellh_fe}
For $n^2\neq4\ell$ and $-\frac12<\operatorname{Re}(z)<0$,
\[
\mathcal{J}_{n,\ell,h}(1-z)=\Xi_{n^2-4\ell}(1-z)^{-1}\,\mathcal{J}_{n,\ell,h_{1-z}}(z),
\]
where $h_{1-z}(t)=h(t)\,\mathcal{G}_{\mu_t}(1-z)$.
\end{lemma}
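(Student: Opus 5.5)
The plan is to reduce the statement to the pointwise (in $t$) functional equation of $\mathcal{J}_{n,\ell,t}$ from the preceding lemma, and then integrate against $h(t)$. By definition \eqref{equ:J-n-ell-h-defn},
\[
\mathcal{J}_{n,\ell,h}(1-z)=\int_{-\infty}^{\infty}h(t)\,\mathcal{J}_{n,\ell,t}(1-z)\,dt ,
\]
and for $-\frac12<\operatorname{Re}(z)<0$ the point $1-z$ lies in $1<\operatorname{Re}(1-z)<\frac32$. That is inside the region $0<\operatorname{Re}<\frac32$ where the $t$-integral defines $\mathcal{J}_{n,\ell,h}$ (no pole at $\frac12$ is relevant since $n^2\neq 4\ell$). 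Applying the preceding lemma with $z$ replaced by $1-z$ gives, for each real $t$,
\[
\mathcal{J}_{n,\ell,t}(1-z)=\mathcal{G}_{\mu_t}(1-z)\,\Xi_{n^2-4\ell}(1-z)^{-1}\,\mathcal{J}_{n,\ell,t}(z).
\]
The factor $\Xi_{n^2-4\ell}(1-z)^{-1}$ does not depend on $t$, so it comes out of the integral. We are left with
\[
\Xi_{n^2-4\ell}(1-z)^{-1}\int_{\mathbb{R}} h(t)\,\mathcal{G}_{\mu_t}(1-z)\,\mathcal{J}_{n,\ell,t}(z)\,dt=\Xi_{n^2-4\ell}(1-z)^{-1}\,\mathcal{J}_{n,\ell,h_{1-z}}(z),
\]
which is the claimed identity, provided the right-hand integral is what $\mathcal{J}_{n,\ell,h_{1-z}}(z)$ means at this $z$.

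The main obstacle is justifying this last step, since $\operatorname{Re}(z)<0$ is outside the range where $\mathcal{J}_{n,\ell,t}(z)$ is given by a convergent $y$-integral. I would handle it in three steps.

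\begin{enumerate}
\item For real $t$, the only poles of $\mathcal{J}_{n,\ell,t}(z)$ in $-\frac12<\operatorname{Re}(z)<\frac32$ are at $z=\pm 2it$, which lie on $\operatorname{Re}(z)=0$. So the pointwise identity holds on the open strip $-\frac12<\operatorname{Re}(z)<0$ by meromorphic continuation in $z$ for each fixed $t$.
\item The integral $\int h(t)\mathcal{G}_{\mu_t}(1-z)\mathcal{J}_{n,\ell,t}(z)\,dt$ converges absolutely, locally uniformly in $z$. By Stirling, $\mathcal{G}_{\mu_t}(1-z)$ and $\mathcal{J}_{n,\ell,t}(z)$, the latter through the explicit Weber--Schafheitlin formulas of Lemma \ref{lem:J-nu-n-ell} together with \eqref{equ:bound-hypergeo}, grow at most polynomially in $|t|$ for $z$ in compacts of the strip. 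Since $h$ decays faster than any power, the integral converges. Note that $\mathcal{G}_{\mu_t}(1-z)$ is holomorphic in $t$ here because $\operatorname{Re}(1-z)>1$.
\item I interpret $\mathcal{J}_{n,\ell,h_{1-z}}(z)$ as this absolutely convergent $t$-integral; this is \eqref{equ:J-n-ell-h-defn} applied to the test function $h_{1-z}$. If one wants instead the $y$-integral continuation of Lemma \ref{lem:Jh-estimate}, one checks that $h_{1-z}$ is still even, still vanishes at $\pm i/2$, and is holomorphic and rapidly decaying in a strip $|\operatorname{Im}(t)|<1+\varepsilon$ for fixed $z$. The poles of $\mathcal{G}_{\mu_t}(1-z)$ in $t$ occur where $1-z\pm 2it\in -2\mathbb{Z}_{\geqslant0}$, which forces $|\operatorname{Im}(t)|>\frac12$. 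This may require either shrinking the strip or noting that the extra poles are away from the contours used. The two continuations then agree by uniqueness of analytic continuation, since they coincide where both converge.
\end{enumerate}
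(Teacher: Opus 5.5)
Your proof is essentially the paper's proof. You insert the pointwise functional equation for $\mathcal{J}_{n,\ell,t}$ at $1-z$ into the $t$-integral \eqref{equ:J-n-ell-h-defn}, pull out the $t$-independent factor $\Xi_{n^2-4\ell}(1-z)^{-1}$, and read what remains as $\mathcal{J}_{n,\ell,h_{1-z}}(z)$; the paper does the same, with the interchange justified by absolute convergence and rapid decay of $h$.

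The optional comparison with the $y$-integral continuation in your step 3 is not needed for the lemma, and as written it is imprecise. First, $h_{1-z}$ has poles at $t=\pm i(1-z)/2$, which lie inside $|\operatorname{Im} t|<1+\varepsilon$; this is exactly why the paper resorts to \eqref{equ:decom-Jh-1-z}. Second, if you do want that identification, the missing point is that the $t$-integral is analytic across $\operatorname{Re}(z)=0$. This holds because its integrand equals $\Xi_{n^2-4\ell}(1-z)\,h(t)\,\mathcal{J}_{n,\ell,t}(1-z)$: the poles of $\mathcal{J}_{n,\ell,t}(z)$ at $t=\pm iz/2$ are cancelled by zeros of $\mathcal{G}_{\mu_t}(1-z)$, where one factor becomes $G_0(1)=0$. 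With that, agreement on $0<\operatorname{Re}(z)<\tfrac12$ propagates to $\operatorname{Re}(z)<0$; merely coinciding where both converge does not give this.
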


\begin{proof}
Insert the functional equation for $\mathcal{J}_{n,\ell,t}(z)$:
\[
\mathcal{J}_{n,\ell,h}(1-z)=\int_{-\infty}^{\infty}h(t)\,\mathcal{G}_{\mu_t}(1-z)\,\Xi_D(1-z)^{-1}\mathcal{J}_{n,\ell,t}(z)dt=\Xi_D(1-z)^{-1}\mathcal{J}_{n,\ell,h_{1-z}}(z),
\]
where $D=n^2-4\ell$. The interchange is justified by absolute convergence and rapid decay.
\end{proof}

The transformed test function $h_{1-z}$ may not be Kuznetsov-admissible due to poles of $\mathcal{G}_{\mu_t}(1-z)$ at $t=\pm i(1-z)/2$. Nevertheless, for $\operatorname{Re}(z)<0$, we have the decomposition
\begin{equation}\label{equ:decom-Jh-1-z}
\mathcal{J}_{h_{1-z}}(y)=\mathcal{Z}_{\ell,h}(z)J_{1-z}(y)+R_{\ell,h}(y),
\end{equation}
where $J_{1-z}$ is the Bessel function of order $1-z$, $R_{\ell,h}^{(j)}(y)\ll y^{\alpha-j}$ as $y\to0^+$ for any $\alpha<2+2\varepsilon$, and
\begin{equation}\label{equ:Z-ell-h}
\mathcal{Z}_{\ell,h}(z)=-2(1-z)h\left(\frac{1-z}{2i}\right)\frac{G_0(1-z)G_0(2-2z)}{\cosh\left(\frac{\pi(1-z)}{2i}\right)}.
\end{equation}

\section{Averaged Voronoi Identities}
\subsection{Holomorphic case (Theorem \ref{thm:holo-voronoi})}

We prove \eqref{equ:holo-voronoi} in six steps.

\emph{Step 1: Petersson trace formula.}\quad Recall that
\[
I_\ell(g)=\sumh_{f\in B_{2k}} a_f(\ell)\sum_{n\ge1} A_f(n)g(n).
\]
Writing $A_f(n)=\sum_{d^2\mid n}a_f((n/d^2)^2)$ and $n=d^2m$, we obtain
\[
I_\ell(g)=\sum_{d,m\ge1} g(d^2m)\sumh_{f\in B_{2k}} a_f(\ell)a_f(m^2).
\]
Applying the Petersson trace formula to $(\ell,m^2)$ gives
\[
\sumh_{f\in B_{2k}} a_f(\ell)a_f(m^2)=\delta_{\ell,m^2}+2\pi i^{-2k}\sum_{c\ge1}\frac{S(\ell,m^2;c)}{c}J_{2k-1}\left(\frac{4\pi m\sqrt\ell}{c}\right).
\]
Hence
\[
I_\ell(g)=\Delta_\ell(g)+\mathcal{G}_\ell(g),\qquad
\Delta_\ell(g):=\mathbf1_{\ell=\square}\sum_{d\ge1}g(d^2\sqrt\ell),
\]
\[
\mathcal{G}_\ell(g):=2\pi i^{-2k}\sum_{d,m\ge1}g(d^2m)\sum_{c\ge1}\frac{S(\ell,m^2;c)}{c}J_{2k-1}\left(\frac{4\pi m\sqrt\ell}{c}\right).
\]
Opening the Kloosterman sum yields
\[
\mathcal{G}_\ell(g)=\pi i^{-2k}\sum_{c\ge1}\frac1c\sideset{}{^*}\sum_{x\bmod c}e\left(\frac{\ell\bar x}{c}\right)\sum_{d\ge1}\sideset{}{'}\sum_{m\in\mathbb{Z}}g(d^2|m|)J_{2k-1}\left(\frac{4\pi|m|\sqrt\ell}{c}\right)e\left(\frac{xm^2}{c}\right).
\]

\vspace{0.5cm}
\emph{Step 2: Poisson summation in the quadratic variable.}\quad Since $g\in\mathcal{S}$, Poisson summation gives
\[
\mathcal{G}_\ell(g)=\pi i^{-2k}\sum_{c\ge1}\frac1c\sideset{}{^*}\sum_{x\bmod c}e\left(\frac{\ell\bar x}{c}\right)\sum_{d\ge1}\sum_{a\bmod c}e\left(\frac{xa^2+an}{c}\right)\sum_{n\in\mathbb{Z}}\widehat{W}_{c,d,\ell}\left(\frac{n}{c}\right),
\]
where
\[
\widehat{W}_{c,d,\ell}(\xi)=2\int_0^\infty g(d^2t)J_{2k-1}\left(\frac{4\pi t\sqrt\ell}{c}\right)\cos(2\pi t\xi)dt.
\]
Using the arithmetic identity \eqref{equ:Ramanujansum},
\[
\Sigma_c(\ell,n):=\sideset{}{^*}\sum_{x\bmod c}e\left(\frac{\ell\bar x}{c}\right)\sum_{a\bmod c}e\left(\frac{xa^2+an}{c}\right)=c\sum_{r\mid c}\mu\left(\frac{c}{r}\right)\rho_r(n,\ell),
\]
with $\rho_r(n,\ell)=\#\{u\bmod r:u^2+nu+\ell\equiv0\pmod r\}$. Substituting and setting $c=rq$ leads to
\[
\mathcal{G}_\ell(g)=\pi i^{-2k}\sum_{r\ge1}\frac1r\sum_{n\in\mathbb{Z}}\rho_r(n,\ell)\sum_{q\ge1}\frac{\mu(q)}{q}\sum_{d\ge1}\widehat{W}_{rq,d,\ell}\left(\frac{n}{rq}\right).
\]

By the Mellin inversion for $g$, we have
\begin{equation*}
\begin{split}
&\sum_{n\in\mathbb{Z}}\rho_r(n,\ell)\sum_{q\ge1}\frac{\mu(q)}{q}\sum_{d\ge1}\widehat{W}_{rq,d,\ell}\left(\frac{n}{rq}\right)\\
=&2\sum_{q\geqslant 1}\frac{\mu(q)}{q}\sum_{d\geqslant 1}\frac1{d^2}\int_0^\infty
\left(\frac1{2\pi i}\int_{(\sigma)}
\widetilde g(1-z)y^{z-1}dz\right)
J_{2k-1}\left(\frac{4\pi y\sqrt{\ell}}{rqd^2}\right)
\cos\left(\frac{2\pi ny}{rqd^2}\right)dy\\
=&\frac1{2\pi i}\int_{(\sigma)}
\widetilde g(1-z)r^{z}\mathcal J_{n,\ell}(z)\sum_{q\geqslant 1}\frac{\mu(q)}{q^{1-z}}\sum_{d\geqslant 1}\frac1{d^{2-2z}}dz\\
=&\frac1{2\pi i}\int_{(\sigma)}
\widetilde g(1-z)
r^z\frac{\zeta(2-2z)}{\zeta(1-z)}
\mathcal J_{n,\ell}(z)dz,
\end{split}
\end{equation*}
for $\sigma\in(-1/2,0)$, where
\begin{equation}
\label{eqn: def-Jnl}
\mathcal J_{n,\ell}(z)
=2\int_0^\infty y^{z-1}J_{2k-1}(4\pi\sqrt{\ell}y)\cos(2\pi ny)dy.
\end{equation}

Substituting this formula back into $\mathcal{G}_{\ell}(g)$ and summing over $r$
first, we obtain
\begin{equation*}
\begin{split}
\mathcal G_\ell(g)=&\frac{\pi i^{-2k}}{2\pi i}\int_{(\sigma)}
\widetilde g(1-z)\sum_{r\geqslant1}\frac{1}{r^{1-z}}
\sum_{n\in\mathbb{Z}}\rho_r(n,\ell)\frac{\zeta(2-2z)}{\zeta(1-z)}
\mathcal J_{n,\ell}(z)dz\\
=&\frac{\pi i^{-2k}}{2\pi i}\int_{(\sigma)}
\widetilde g(1-z)
\sum_{n\in\mathbb{Z}}
\mathscr L_{n^2-4\ell}(1-z)\mathcal J_{n,\ell}(z)dz.
\end{split}
\end{equation*}
Here, we used the definition 
\[
    \mathscr{L}_{n^2-4\ell}(s)=\frac{\zeta(2s)}{\zeta(s)}\sum_{r=1}^{\infty}\frac{\rho_{r}(n,\ell)}{r^s},\quad\mathrm{Re}(z)>0.
\]
\vspace{0.5cm}

\emph{Step 3: Apply the same to $I_\ell(\mathcal{T}_\mu g)$.}\quad
By Lemma \ref{lem:Tmug-props}, repeating the same steps gives
\[
I_\ell(\mathcal{T}_\mu g)=\Delta_\ell(\mathcal{T}_\mu g)+\mathcal{G}_\ell(\mathcal{T}_\mu g),
\]
with
\[
\mathcal{G}_\ell(\mathcal{T}_\mu g)=\frac{\pi i^{-2k}}{2\pi i}\int_{(\sigma)}\widetilde{\mathcal{T}_\mu g}(1-z)\sum_{n\in\mathbb{Z}}\mathscr{L}_{n^2-4\ell}(1-z)\mathcal{J}_{n,\ell}(z)dz.
\]

\vspace{0.5cm}

\emph{Step 4: Archimedean reciprocity.}\quad
Let $\mathcal{G}_\ell^{\mathrm{ns}}(g)$ denote the contribution of $n^2\neq4\ell$ in $\mathcal{G}_\ell(g)$. By Lemma \ref{lem:J-n-ell-fe} and \eqref{equ:LD_fe},
\[
\mathscr{L}_{n^2-4\ell}(1-z)\mathcal{J}_{n,\ell}(z)=\mathcal{G}_\mu(z)\mathscr{L}_{n^2-4\ell}(z)\mathcal{J}_{n,\ell}(1-z)\qquad(n^2\neq4\ell).
\]
Combined with $\widetilde{\mathcal{T}_\mu g}(s)=\widetilde g(1-s)\mathcal{G}_\mu(s)$, we obtain
\[
\mathcal{G}_\ell^{\mathrm{ns}}(g)=\frac{\pi i^{-2k}}{2\pi i}\int_{(\sigma)}\widetilde{\mathcal{T}_\mu g}(z)\sum_{n^2\neq4\ell}\mathscr{L}_{n^2-4\ell}(z)\mathcal{J}_{n,\ell}(1-z)dz.
\]
Shifting the contour (justified by Lemma \ref{lem:holomorphic-nonsingular-series} and \ref{lem:Tmug-props}) yields
\[
\mathcal{G}_\ell^{\mathrm{ns}}(g)=\mathcal{G}_\ell^{\mathrm{ns}}(\mathcal{T}_\mu g).
\]
If $\ell$ is not a perfect square, this already gives $I_\ell(g)=I_\ell(\mathcal{T}_\mu g)$.

\vspace{0.5cm}

\emph{Step 5: Singular and diagonal corrections.}\quad
Now assume $\ell=m^2$. Then
\[
I_\ell(g)-I_\ell(\mathcal{T}_\mu g)=\frac{\pi i^{-2k}}{2\pi i}\int_{(\sigma)}\left(\widetilde g(1-z)-\widetilde{\mathcal{T}_\mu g}(1-z)\right)\sum_{n=\pm2m}\mathscr{L}_0(1-z)\mathcal{J}_{n,m^2}(z)dz+\Delta_{m^2}(g)-\Delta_{m^2}(\mathcal{T}_\mu g).
\]
Since $\mathscr{L}_0(s)=\zeta(2s-1)$ and $\mathcal{J}_{-2m,m^2}=\mathcal{J}_{2m,m^2}$, define
\[
\mathcal{S}_m(g)=\frac1{2\pi i}\int_{(\sigma)}\widetilde g(1-z)\mathcal{R}_m(z)dz,\qquad
\mathcal{R}_m(z):=2\pi i^{-2k}\zeta(1-2z)\mathcal{J}_{2m,m^2}(z).
\]
By Lemma \ref{lem:J-2m-m^2-fe}, $\mathcal{R}_m(z)=\mathcal{G}_\mu(z)m^{-z}\zeta(2z)$. A direct computation using $\mathcal{G}_\mu(z)\mathcal{G}_\mu(1-z)=1$ shows that the combined kernel satisfies
\[
\mathcal{R}_m(z)+m^{z-1}\zeta(2-2z)=\mathcal{G}_\mu(z)\bigl(\mathcal{R}_m(1-z)+m^{-z}\zeta(2z)\bigr).
\]
Notice that the term $m^{z-1}\zeta(2-2z)$ comes from the 
Noting that $\Delta_{m^2}(g)=\frac1{2\pi i}\int_{(\sigma)}\widetilde g(1-z)m^{z-1}\zeta(2-2z)dz$, and using $\widetilde{\mathcal{T}_\mu g}(z)=\widetilde g(1-z)\mathcal{G}_\mu(z)$, we obtain after contour shifting
\[
\mathcal{S}_m(\mathcal{T}_\mu g)+\Delta_{m^2}(\mathcal{T}_\mu g)=\mathcal{S}_m(g)+\Delta_{m^2}(g).
\]
Hence $I_{m^2}(g)=I_{m^2}(\mathcal{T}_\mu g)$.

This completes the proof of Theorem \ref{thm:holo-voronoi}.

\subsection{Maass case (Theorem \ref{thm:maass-voronoi})}

The proof follows the same argument as the holomorphic case with the following modifications:

\begin{enumerate}
    \item \emph{Trace formula:}\quad Replace the Petersson trace formula with the Kuznetsov trace formula, introducing both discrete (Maass cusp forms) and continuous (Eisenstein series) spectra.

    \item \emph{Archimedean transform:}\quad Replace $T_\mu$ with $T_{\mu_t}$, depending on the spectral parameter $t$, where $\mu_t = (2it,0,-2it)$ and the associated kernel $G_{\mu_t}(s)$ replaces $G_\mu(s)$. Since $I^{\vee}_{\ell}(g;h)\neq I_{\ell}(\mathcal{T}_{\mu_t}g;h)$ for some fixed $t$, we need to treat $I^{\vee}_{\ell}(g;h)$ differently.

      \item \emph{Continuous spectrum contribution:}\quad Unlike the holomorphic case, the pole at $z=0$ coming from the square-discriminant terms $\mathscr{L}_{n,\ell}(1-z)$ is not cancelled by the Maass kernel $J_{n,4\ell,t}(z)$. Consequently, the contour shift crosses this pole and produces an additional residue term. After averaging over the continuous spectrum, these residues give rise to the explicit Eisenstein contribution $P_t(g)$ appearing in the Voronoi identity.
\end{enumerate}

We proceed in six steps.

\vspace{0.5cm}
\emph{Step 1: Kuznetsov trace formula.}\quad
Define $I_\ell(g;h)$ as the left-hand side of \eqref{equ:maass-voronoi}. Using
\[
A_u(n)=\sum_{d^2\mid n}\lambda_u\left(\left(\frac{n}{d^2}\right)^2\right),\qquad
A_t(n)=\sum_{d^2\mid n}\tau_t\left(\left(\frac{n}{d^2}\right)^2\right),
\]
and writing $n=d^2m$, the Kuznetsov formula \eqref{equ:level-one-kuznetsov} gives
\[
I_\ell(g;h)=H_0(h)\Delta_\ell(g)+\mathcal{G}_{\ell,h}(g),
\]
where
\[
H_0(h):=\frac1{\pi^2}\int_{-\infty}^{\infty}t\tanh(\pi t)h(t)dt,\quad
\Delta_\ell(g):=\mathbf1_{\ell=\square}\sum_{d\ge1}g(d^2\sqrt\ell),
\]
and
\[
\mathcal{G}_{\ell,h}(g)=\sum_{c\ge1}\frac1c\sideset{}{^*}\sum_{x\bmod c}e\left(\frac{\ell\bar x}{c}\right)\sum_{d\ge1}\sideset{}{'}\sum_{m\in\mathbb{Z}}g(d^2|m|)\mathcal{J}_h\left(\frac{4\pi|m|\sqrt\ell}{c}\right)e\left(\frac{xm^2}{c}\right).
\]

\vspace{0.5cm}
\emph{Step 2: Poisson summation in the quadratic variable.}\quad
Repeating the Poisson summation argument from the holomorphic case (which is valid because $g\in\mathcal{S}$) yields
\[
\mathcal{G}_{\ell,h}(g)=\sum_{r\ge1}\frac1r\sum_{n\in\mathbb{Z}}\rho_r(n,\ell)\sum_{q\ge1}\frac{\mu(q)}{q}\sum_{d\ge1}\frac1{d^2}\int_0^{\infty}g(y)\mathcal{J}_h\left(\frac{4\pi y\sqrt\ell}{rqd^2}\right)\cos\left(\frac{2\pi ny}{rqd^2}\right)dy;
\]
and furthermore,
\begin{equation}\label{equ:maass-G-integral}
\mathcal G_{\ell,h}(g)
=\frac1{4\pi i}\int_{(\sigma)}
\widetilde g(1-z)
\sum_{n\in\mathbb{Z}}
\mathscr L_{n^2-4\ell}(1-z)\mathcal J_{n,\ell,h}(z)dz,
\end{equation}
where $-\frac12<\sigma<0$.

\vspace{0.5cm}
\emph{Step 3: Apply the same to $I_\ell^\vee(g;h)$.}\quad
Unlike the holomorphic case, the transform $\mathcal{T}_{\mu_t}$ depends on the spectral parameter $t$, so the manipulation for $I_\ell^\vee(g;h)$ is not identical. 

Let $\varepsilon_0>0$ such that $h$ is holomorphic for $|\operatorname{Im}(t)|<1+\varepsilon_0$, and take $\sigma\in(-\varepsilon_0,0)$. By definition,
\[
I_\ell^\vee(g;h)=\frac1{2\pi i}\int_{(\sigma)}\widetilde{g}(z)\mathcal{C}_{\ell,h}(z)dz,
\]
where
\[
\mathcal{C}_{\ell,h}(z):=\sum_{d,m\ge1}d^{2z-2}m^{z-1}\left[\sum_{u\in\mathcal{B}}\omega_u h_{1-z}(t_u)\lambda_u(\ell)\lambda_u(m^2)+\frac1\pi\int_{-\infty}^{\infty}\frac{h_{1-z}(t)\tau_t(\ell)\tau_t(m^2)}{|\zeta(1+2it)|^2}dt\right],
\]
with $h_{1-z}(t)=h(t)\mathcal{G}_{\mu_t}(1-z)$. 

Applying the Kuznetsov trace formula gives
\[
\mathcal{C}_{\ell,h}(z)=\zeta(2-2z)\sum_{m\ge1}m^{z-1}\left[\delta_{\ell,m^2}H_0(h_{1-z})+\sum_{c\ge1}\frac{S(\ell,m^2;c)}{c}\mathcal{J}_{h_{1-z}}\left(\frac{4\pi m\sqrt{\ell}}{c}\right)\right].
\]

Define
\[
\mathcal{D}_{\ell,h}(z):=\sum_{m\ge1}m^{z-1}\sum_{c\ge1}\frac{S(\ell,m^2;c)}{c}\mathcal{J}_{h_{1-z}}\left(\frac{4\pi m\sqrt{\ell}}{c}\right).
\]
Opening the Kloosterman sum yields
\[
\mathcal{D}_{\ell,h}(z)=\frac12\sum_{c\ge1}\frac1c\sideset{}{^*}\sum_{x\bmod c}e\left(\frac{\ell\overline{x}}{c}\right)\sideset{}{'}\sum_{m\in\mathbb{Z}}|m|^{z-1}\mathcal{J}_{h_{1-z}}\left(\frac{4\pi|m|\sqrt{\ell}}{c}\right)e\left(\frac{xm^2}{c}\right).
\]

To apply Poisson summation, we use the decomposition (see \eqref{equ:decom-Jh-1-z})
\[
\mathcal{J}_{h_{1-z}}(y)=\mathcal{Z}_{\ell,h}(z)J_{1-z}(y)+R_{\ell,h}(y),
\]
where $J_{1-z}$ is the Bessel function of order $1-z$, $R_{\ell,h}^{(j)}(y)\ll y^{\alpha-j}$ as $y\to0^+$ for $j=0,1,2$ and any $\alpha<2+2\varepsilon_0$, and
\[
\mathcal{Z}_{\ell,h}(z)=-2(1-z)h\left(\frac{1-z}{2i}\right)\frac{G_0(1-z)G_0(2-2z)}{\cosh\left(\frac{\pi(1-z)}{2i}\right)}.
\]

Applying Poisson summation (and noting the $m=0$ contribution) gives
\begin{equation}\label{equ:poisson-dual}
\begin{split}
  &\sideset{}{'}\sum_{m\in\mathbb{Z}}|m|^{z-1}\mathcal{J}_{h_{1-z}}\left(\frac{4\pi|m|\sqrt{\ell}}{c}\right)e\left(\frac{xm^2}{c}\right)+\lim_{y\to0^+}y^{z-1}\mathcal{J}_{h_{1-z}}\left(\frac{4\pi y\sqrt{\ell}}{c}\right)\\
  =&\frac{2}{c}\sum_{a\bmod c}\sum_{n\in\mathbb{Z}}e\left(\frac{a^2x+an}{c}\right)\int_0^{\infty}y^{z-1}\mathcal{J}_{h_{1-z}}\left(\frac{4\pi y\sqrt{\ell}}{c}\right)\cos(2\pi ny)dy.
\end{split}
\end{equation}

Using the arithmetic identity \eqref{equ:Ramanujansum}, we obtain
\begin{equation*}
\begin{split}
\mathcal{D}_{\ell,h}(z)=&\sum_{r\ge1}\frac1r\sum_{n\in\mathbb{Z}}\rho_r(n,\ell)\sum_{q\ge1}\frac{\mu(q)}{q}\int_0^{\infty}y^{z-1}\mathcal{J}_{h_{1-z}}\left(\frac{4\pi y\sqrt{\ell}}{rq}\right)\cos(2\pi ny)dy\\
&+\frac12\sum_{c\ge1}\sideset{}{^*}\sum_{x\bmod c}e\left(\frac{\ell\overline{x}}{c}\right)\lim_{y\to0^+}y^{z-1}\mathcal{J}_{h_{1-z}}\left(\frac{4\pi y\sqrt{\ell}}{c}\right). 
\end{split}
\end{equation*}

Substituting back into $\mathcal{C}_{\ell,h}(z)$ and simplifying, the middle term becomes
\[
\frac{1}{4\pi i}\int_{(\sigma)}\widetilde{g}(z)\sum_{n\in\mathbb{Z}}\mathscr{L}_{n^2-4\ell}(1-z)\mathcal{J}_{n,\ell,h_{1-z}}(z)dz.
\]

Finally, we decompose $I_\ell^\vee(g;h)$ into four components:
\[
I_\ell^\vee(g;h)=I_\ell^{\vee,\mathrm{ns}}(g;h)+I_\ell^{\vee,\mathrm{sing}}(g;h)+I_\ell^{\vee,0}(g;h)+I_\ell^{\vee,\mathrm{diag}}(g;h),
\]
where
\begin{align*}
I_\ell^{\vee,\mathrm{ns}}(g;h)&=\frac{1}{4\pi i}\int_{(\sigma)}\widetilde{g}(z)\sum_{n^2\neq4\ell}\mathscr{L}_{n^2-4\ell}(1-z)\mathcal{J}_{n,\ell,h_{1-z}}(z)dz,\\
I_\ell^{\vee,\mathrm{sing}}(g;h)&=\frac{1}{4\pi i}\int_{(\sigma)}\widetilde{g}(z)\sum_{n^2=4\ell}\mathscr{L}_{n^2-4\ell}(1-z)\mathcal{J}_{n,\ell,h_{1-z}}(z)dz,\\
I_\ell^{\vee,0}(g;h)&=\frac{1}{4\pi i}\int_{(\sigma)}\widetilde{g}(z)\zeta(2-2z)\sum_{c\ge1}\sideset{}{^*}\sum_{x\bmod c}e\left(\frac{\ell\overline{x}}{c}\right)\lim_{y\to0^+}y^{z-1}\mathcal{J}_{h_{1-z}}\left(\frac{4\pi y\sqrt{\ell}}{c}\right)dz,\\
I_\ell^{\vee,\mathrm{diag}}(g;h)&=\frac{1}{2\pi i}\int_{(\sigma)}\widetilde{g}(z)\zeta(2-2z)\sum_{m\ge1}m^{z-1}\delta_{\ell,m^2}H_0(h_{1-z})dz.
\end{align*}

\vspace{0.5cm}
\emph{Step 4: Archimedean reciprocity.}\quad
Denote
\[
\mathcal{G}_{\ell,h}^{\mathrm{ns}}(g)=\frac1{4\pi i}\int_{(\sigma)}\widetilde{g}(1-z)\sum_{n^2\neq4\ell}\mathscr{L}_{n^2-4\ell}(1-z)\mathcal{J}_{n,\ell,h}(z)dz,
\]
with $\sigma\in(-\frac12,0)$. By contour shifting and Lemma~\ref{lem:Jnlellh-growth},
\[
\mathcal{G}_{\ell,h}^{\mathrm{ns}}(g)=\frac1{4\pi i}\int_{(1-\sigma)}\widetilde{g}(1-z)\sum_{n^2\neq4\ell}\mathscr{L}_{n^2-4\ell}(1-z)\mathcal{J}_{n,\ell,h}(z)dz-\frac12\widetilde{g}(1)\mathcal{E}_{\ell,h}^{\mathrm{ns},0},
\]
where $\mathcal{E}_{\ell,h}^{\mathrm{ns},0}:=\operatorname*{Res}_{z=0}\sum_{n^2\neq4\ell}\mathscr{L}_{n^2-4\ell}(1-z)\mathcal{J}_{n,\ell,h}(z)$. 
Applying Lemma~\ref{lem:Jnlellh_fe} and $z\mapsto1-z$ yields
\[
\mathcal{G}_{\ell,h}^{\mathrm{ns}}(g)=\frac1{4\pi i}\int_{(\sigma)}\widetilde{g}(z)\sum_{n^2\neq4\ell}\mathscr{L}_{n^2-4\ell}(1-z)\mathcal{J}_{n,\ell,h_{1-z}}(z)dz-\frac12\widetilde{g}(1)\mathcal{E}_{\ell,h}^{\mathrm{ns},0},
\]
where $h_{1-z}(t)=h(t)\mathcal{G}_{\mu_t}(1-z)$.

For the singular part $n^2=4\ell$, the kernel $\mathcal{J}_{2m,m^2,h}(z)$ has a pole at $z=1/2$, so contour shifting gives an extra residue:
\begin{align*}
\mathcal{G}_{\ell,h}^{\mathrm{sing}}(g)&:=\frac1{4\pi i}\int_{(\sigma)}\widetilde{g}(1-z)\sum_{n^2=4\ell}\mathscr{L}_{n^2-4\ell}(1-z)\mathcal{J}_{n,\ell,h}(z)dz\\
&=\frac1{4\pi i}\int_{(1-\sigma)}\widetilde{g}(1-z)\sum_{n^2=4\ell}\mathscr{L}_{n^2-4\ell}(1-z)\mathcal{J}_{n,\ell,h}(z)dz\\
&\quad-\frac12\widetilde{g}(1)\mathcal{E}_{\ell,h}^{\mathrm{sing},0}-\frac12\widetilde{g}\left(\frac12\right)\operatorname*{Res}_{z=1/2}\sum_{n^2=4\ell}\mathscr{L}_{n^2-4\ell}(1-z)\mathcal{J}_{n,\ell,h}(z)\\
&=\frac1{4\pi i}\int_{(\sigma)}\widetilde{g}(z)\sum_{n^2=4\ell}\mathscr{L}_{n^2-4\ell}(z)\mathcal{J}_{n,\ell,h}(1-z)dz\\
&\quad-\frac12\widetilde{g}(1)\mathcal{E}_{\ell,h}^{\mathrm{sing},0}-\frac12\widetilde{g}\left(\frac12\right)\operatorname*{Res}_{z=1/2}\sum_{n^2=4\ell}\mathscr{L}_{n^2-4\ell}(1-z)\mathcal{J}_{n,\ell,h}(z),
\end{align*}
where $\mathcal{E}_{\ell,h}^{\mathrm{sing},0}:=\operatorname*{Res}_{z=0}\sum_{n^2=4\ell}\mathscr{L}_{n^2-4\ell}(1-z)\mathcal{J}_{n,\ell,h}(z)$.

We decompose $I_\ell(g;h)$ as
\[
I_\ell(g;h)=I_\ell^{\mathrm{ns}}(g;h)+I_\ell^{\mathrm{sing}}(g;h)+I_\ell^{0}(g;h)+I_\ell^{\mathrm{diag}}(g;h),
\]
with
\begin{align*}
I_\ell^{\mathrm{ns}}(g;h)&=\frac1{4\pi i}\int_{(\sigma)}\widetilde{g}(z)\sum_{n^2\neq4\ell}\mathscr{L}_{n^2-4\ell}(1-z)\mathcal{J}_{n,\ell,h_{1-z}}(z)dz,\\[4pt]
I_\ell^{\mathrm{sing}}(g;h)&=\frac1{4\pi i}\int_{(\sigma)}\widetilde{g}(z)\sum_{n^2=4\ell}\mathscr{L}_{n^2-4\ell}(z)\mathcal{J}_{n,\ell,h}(1-z)dz\\
&\quad-\frac12\widetilde{g}\left(\frac12\right)\operatorname*{Res}_{z=1/2}\sum_{n^2=4\ell}\mathscr{L}_{n^2-4\ell}(1-z)\mathcal{J}_{n,\ell,h}(z),\\[4pt]
I_\ell^{0}(g;h)&=-\frac12\widetilde{g}(1)\bigl(\mathcal{E}_{\ell,h}^{\mathrm{ns},0}+\mathcal{E}_{\ell,h}^{\mathrm{sing},0}\bigr),\\[4pt]
I_\ell^{\mathrm{diag}}(g;h)&=H_0(h)\Delta_\ell(g).
\end{align*}

By construction, $I_\ell^{\mathrm{ns}}(g;h)=I_\ell^{\vee,\mathrm{ns}}(g;h)$. It remains to prove
\[
I_\ell^{\mathrm{sing}}(g;h)+I_\ell^{0}(g;h)+I_\ell^{\mathrm{diag}}(g;h)-I_\ell^{\vee,\mathrm{sing}}(g;h)-I_\ell^{\vee,0}(g;h)-I_\ell^{\vee,\mathrm{diag}}(g;h)=\frac1{\pi}\int_\R h(t)\tau_t(\ell)
\mathcal{P}_g(t)dt,
\]
where 
\[
\mathcal{P}_g(t)=\widetilde g(1-2it)\frac{\zeta(1-4it)}{\zeta(1+2it)}
+\widetilde g(1)
+\widetilde g(1+2it)\frac{\zeta(1+4it)}{\zeta(1-2it)}.
\]

\vspace{0.5cm}
\emph{Step 5: Computation of $I_\ell^{0}(g;h)-I_\ell^{\vee,0}(g;h)$.}\quad
We first compute $I_\ell^0(g;h)$.
\begin{lemma}\label{lem:I-ell-0}
We have
\[
I_\ell^0(g;h)=\frac{\widetilde{g}(1)}{\pi}\int_{-\infty}^{\infty}h(t)\tau_t(\ell)dt.
\]
\end{lemma}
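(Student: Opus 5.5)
The plan is to compute the residue at $z=0$ directly. Only the terms with square discriminant $n^2-4\ell=d^2$ can contribute, and then we evaluate the resulting finite sum with Lemma \ref{lem:J-n-ell-t-0}.

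By definition,
\[
I_\ell^0(g;h)=-\tfrac12\widetilde g(1)\operatorname*{Res}_{z=0}\sum_{n\in\mathbb{Z}}\mathscr{L}_{n^2-4\ell}(1-z)\,\mathcal{J}_{n,\ell,h}(z),
\]
where the sum is the union of the nonsingular and singular parts. For fixed $\ell$, $\mathscr{L}_{n^2-4\ell}(1-z)$ is holomorphic at $z=0$ unless $n^2-4\ell=d^2$ is a square, and there are only finitely many such $n$. The tail of the series converges uniformly near $z=0$ by the estimates behind Lemma \ref{lem:Jnlellh-growth}, so the residue may be taken termwise over this finite set.

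For each such $n$ we use $\operatorname*{Res}_{s=1}\mathscr{L}_{d^2}(s)=\tfrac12$ if $d=0$ and $=1$ if $d\ge1$. Since $s=1-z$, this gives $\operatorname*{Res}_{z=0}\mathscr{L}_{d^2}(1-z)=-c_d$ with $c_0=\tfrac12$ and $c_d=1$ for $d\geqslant1$. Provided $\mathcal{J}_{n,\ell,h}$ is holomorphic at $z=0$, we get
\[
I_\ell^0(g;h)=\tfrac12\widetilde g(1)\sum_{n^2-4\ell=d^2}c_d\,\mathcal{J}_{n,\ell,h}(0),\qquad \mathcal{J}_{n,\ell,h}(0)=\int_{\mathbb R}h(t)\,\mathcal{J}_{n,\ell,t}(0)\,dt .
\]
Next we parametrize the square discriminants. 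The solutions $n$ of $n^2-4\ell=d^2$ with $d\ge0$ correspond to pairs $(a,b)$ with $ab=\ell$, $a\le b$, $|n|=a+b$, $d=b-a$, each pair giving the two values $n=\pm(a+b)$. Lemma \ref{lem:J-n-ell-t-0} gives $\mathcal{J}_{n,\ell,t}(0)=\frac1\pi\bigl[(a/b)^{it}+(b/a)^{it}\bigr]$.

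The two cases contribute as follows.
\begin{itemize}
\item A pair with $a<b$: the two signs and $c_d=1$ contribute $\frac1\pi\bigl[(a/b)^{it}+(b/a)^{it}\bigr]$ after the prefactor $\frac12$.
\item The pair $a=b$ (only when $\ell$ is a square): $c_0=\frac12$, the two signs, and the value $2/\pi$ contribute $\frac12\cdot2\cdot\frac12\cdot\frac2\pi=\frac1\pi$.
\end{itemize}
Summing over all factorizations $ab=\ell$ reproduces $\frac1\pi\sum_{ab=\ell}(a/b)^{it}=\frac1\pi\tau_t(\ell)$. Integrating against $h(t)$ gives the claimed formula.

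The main obstacle is justifying that $\mathcal{J}_{n,\ell,h}(z)$ is holomorphic at $z=0$ and equals $\int h(t)\mathcal{J}_{n,\ell,t}(0)\,dt$ there. For real $t$, the individual kernels $\mathcal{J}_{n,\ell,t}(z)$ have poles at $z=\pm2it$, which accumulate at $z=0$ as $t\to0$.

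I would first handle this using the extension of $\mathcal{J}_{n,\ell,h}$ to $-2-2\varepsilon<\operatorname{Re}z<\frac32$ via Lemma \ref{lem:Jh-estimate} and the integral representation. That representation shows holomorphy at $z=0$ away from $z=\frac12$. To identify the value, I would evaluate at $z=0$ through the shifted-contour form of $\mathcal{J}_h$ from Lemma \ref{lem:Jh-estimate}. Alternatively, I would note that the pole of $J_{\pm2it}$-terms at $z=\mp2it$ carries a residue proportional to the factor $t/\cosh(\pi t)$, so it is cancelled by the oddness in $t$ after integration against the even function $h$. In both approaches, the value at $0$ is then the $t$-integral of the formula in Lemma \ref{lem:J-n-ell-t-0}, since that formula is continuous in $t$.
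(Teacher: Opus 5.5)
Your proof is correct and is essentially the paper's argument: you take the residue at $z=0$ termwise over the finitely many $n$ with $n^2-4\ell=d^2$, use $\operatorname{Res}_{z=0}\mathscr{L}_{d^2}(1-z)=-c_d$, parametrize by factorizations $ab=\ell$, and integrate Lemma~\ref{lem:J-n-ell-t-0} against $h$, with the same bookkeeping ($\frac1\pi[(a/b)^{it}+(b/a)^{it}]$ from each pair $a<b$, $\frac1\pi$ from $a=b$) assembling $\tau_t(\ell)$. The paper simply asserts $\mathcal{J}_{n,\ell,h}(0)=\int h(t)\mathcal{J}_{n,\ell,t}(0)\,dt$, so your first justification via the shifted-contour form of Lemma~\ref{lem:Jh-estimate} is a genuine addition; your ``oddness'' heuristic is loose, since the real mechanism is that the residues at $z=\mp 2it$ carry the factor $t$, which keeps the integrand bounded near $t=0$ uniformly as $z\to0^+$, so dominated convergence gives the value at $z=0$.
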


\begin{proof}
Recall that $I_\ell^0(g;h)=-\frac12\widetilde{g}(1)(\mathcal{E}_{\ell,h}^{\mathrm{ns},0}+\mathcal{E}_{\ell,h}^{\mathrm{sing},0})$, where
\[
\mathcal{E}_{\ell,h}^{\mathrm{ns},0}=\operatorname*{Res}_{z=0}\sum_{n^2\neq4\ell}\mathscr{L}_{n^2-4\ell}(1-z)\mathcal{J}_{n,\ell,h}(z),\qquad
\mathcal{E}_{\ell,h}^{\mathrm{sing},0}=\operatorname*{Res}_{z=0}\sum_{n^2=4\ell}\mathscr{L}_{n^2-4\ell}(1-z)\mathcal{J}_{n,\ell,h}(0).
\]

Write $n^2-4\ell=d^2$ and parametrize $n=\pm(a+b)$, $\ell=ab$ with $a\le b$. Using
\[
\operatorname*{Res}_{z=0}\mathscr{L}_{d^2}(1-z)=\begin{cases}
-1,&d\ge1\\[2pt]
-\frac12,&d=0,
\end{cases}
\]
and $\mathcal{J}_{-n,\ell,h}(0)=\mathcal{J}_{n,\ell,h}(0)$, we obtain
\[
\mathcal{E}_{\ell,h}^{\mathrm{ns},0}+\mathcal{E}_{\ell,h}^{\mathrm{sing},0}=-2\sum_{\substack{ab=\ell\\ a<b}}\mathcal{J}_{a+b,\ell,h}(0)-\mathbf1_{\ell=\square}\mathcal{J}_{2\sqrt\ell,\ell,h}(0).
\]

By Lemma \ref{lem:J-n-ell-t-0},
\[
\mathcal{J}_{a+b,ab,t}(0)=\frac1\pi\left[\left(\frac{a}{b}\right)^{it}+\left(\frac{b}{a}\right)^{it}\right],
\]
and integrating against $h$ gives $\mathcal{J}_{a+b,\ell,h}(0)=\frac1\pi\int_{-\infty}^{\infty}h(t)[(a/b)^{it}+(b/a)^{it}]dt$. Since 
\[
    \tau_t(\ell)
=\ell^{-it}\sigma_{2it}(\ell)
=\sum_{d\mid\ell}\left(\frac{d}{\ell/d}\right)^{it}
=\sum_{ab=\ell}\left(\frac{a}{b}\right)^{it},
\]
the sum over $a<b$ together with the square case yields the desired formula.
\end{proof}

Next we compute $I_\ell^{\vee,0}(g;h)$.

\begin{lemma}\label{lem:I-ell-vee-0}
\[
I_\ell^{\vee,0}(g;h)=-\frac1\pi\int_{-\infty}^{\infty}h(t)\tau_t(\ell)\left\{\widetilde{g}(1-2it)\frac{\zeta(1-4it)}{\zeta(1+2it)}+\widetilde{g}(1+2it)\frac{\zeta(1+4it)}{\zeta(1-2it)}\right\}dt.
\]
\end{lemma}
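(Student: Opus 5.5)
The plan is to evaluate the limit $\lim_{y\to0^+}$ inside $I_\ell^{\vee,0}(g;h)$ explicitly, sum the resulting Ramanujan sums over $c$, and then change variables $z\leftrightarrow t$ so that the factor $h\bigl(\frac{1-z}{2i}\bigr)$ in $\mathcal{Z}_{\ell,h}(z)$ becomes $h(t)$ with $t$ real.

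\emph{Step 1: the limit.} Fix $\sigma\in(-\varepsilon_0,0)$ and use the decomposition \eqref{equ:decom-Jh-1-z}. Since $R_{\ell,h}(y)\ll y^{\alpha}$ with $\alpha>2$, we have $y^{z-1}R_{\ell,h}(4\pi y\sqrt\ell/c)\to0$. For the Bessel term, the series expansion $J_{1-z}(x)=(x/2)^{1-z}/\Gamma(2-z)+O(x^{3-\operatorname{Re} z})$ gives
\[
\lim_{y\to0^+}y^{z-1}\mathcal{J}_{h_{1-z}}\Bigl(\frac{4\pi y\sqrt\ell}{c}\Bigr)=\mathcal{Z}_{\ell,h}(z)\frac{(2\pi\sqrt\ell)^{1-z}}{\Gamma(2-z)}\,c^{z-1}.
\]

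\emph{Step 2: the sum over $c$.} The sum over $x$ is the Ramanujan sum $c_c(\ell)$. The classical identity $\sum_{c\ge1}c_c(\ell)c^{-s}=\sigma_{1-s}(\ell)/\zeta(s)$, applied with $s=1-z$ and $\operatorname{Re}(1-z)>1$, gives
\[
I_\ell^{\vee,0}(g;h)=\frac{1}{4\pi i}\int_{(\sigma)}\widetilde g(z)\,\frac{\zeta(2-2z)}{\zeta(1-z)}\,\sigma_z(\ell)\,(2\pi\sqrt\ell)^{1-z}\,\frac{\mathcal{Z}_{\ell,h}(z)}{\Gamma(2-z)}\,dz .
\]

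\emph{Step 3: simplify the archimedean factors.} Insert \eqref{equ:Z-ell-h} and simplify as follows.
\begin{itemize}
\item Use $\zeta(1-s)=G_0(s)\zeta(s)$ to write $G_0(2-2z)\zeta(2-2z)=\zeta(2z-1)$; this also removes the apparent pole at $z=\tfrac12$, because $G_0$ vanishes at $1$.
\item Use the same relation to rewrite $G_0(1-z)/\zeta(1-z)$.
\item Combine $\cosh\bigl(\pi(1-z)/2i\bigr)$, $(1-z)$ and $\Gamma(2-z)$ with reflection and duplication so that all Gamma factors cancel.
\end{itemize}
Then parametrize $z=1\mp2it$, so that $\frac{1-z}{2i}=\pm t$. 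The aim is to see $\sigma_{z}(\ell)\ell^{(1-z)/2}$ collapse to $\tau_t(\ell)$, up to a power of $\ell$ which should cancel, and the zeta quotient become $\zeta(1\mp4it)/\zeta(1\pm2it)$. Evenness of $h$ and of $\tau_t(\ell)$ in $t$ then produces the two symmetric terms $\widetilde g(1\mp2it)\frac{\zeta(1\mp4it)}{\zeta(1\pm 2it)}$, each carrying half the total weight. The overall constant, in particular the sign $-1/\pi$, will be tracked through the Jacobian $dz=\mp2i\,dt$ and the prefactor $-2(1-z)$ in $\mathcal{Z}_{\ell,h}$.

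\emph{Main obstacle.} The delicate step is moving the line of integration from $\operatorname{Re} z=\sigma<0$, where the $c$-sum converges, to $\operatorname{Re} z=1$, where $t$ is real. Along the way the integrand passes through the critical strip of $1/\zeta(1-z)$, that is, $0<\operatorname{Re}(1-z)<1$, and it also meets the points where $h\bigl(\frac{1-z}{2i}\bigr)$ leaves its strip of holomorphy. I would first try to avoid moving across the strip at all. Since $h$ is holomorphic for $|\operatorname{Im} t|<1+\varepsilon_0$, I would rewrite the $z$-integral directly as a $t$-integral over $\operatorname{Im}t=-(1-\sigma)/2$. Then, after the Gamma cancellations of Step 3, I would check that the combined $t$-integrand, which involves $h(t)\tau_t(\ell)$ times ratios of zeta values, can be shifted to the real axis.

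This shift is where the two forms of the denominator enter. I would use the functional equation to trade $1/\zeta(2it)$ for $1/\zeta(1-2it)$ up to Gamma factors. A real-line contour then only involves $\zeta$ on $\operatorname{Re}=1$, which has no zeros, and boundary decay comes from the rapid decay of $h$. If this shift cannot be justified directly, the fallback is analytic continuation: both sides depend holomorphically on auxiliary parameters (for instance $g\mapsto g\,x^{w}$), so it suffices to prove the identity in a range where no zeta zeros are crossed and then continue.
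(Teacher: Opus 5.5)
Your Step~1 is correct, and the gamma cancellations you plan in Step~3 do work. Step~2, however, contains an error that the rest of the proposal inherits. You take the displayed definition of $I_\ell^{\vee,0}(g;h)$ literally and sum $\sum_{c}c_c(\ell)c^{z-1}=\sigma_z(\ell)/\zeta(1-z)$. That display has lost a factor $1/c$. The factor is present in $\mathcal{D}_{\ell,h}(z)$, coming from $S(\ell,m^2;c)/c$, and the paper's proof uses it, evaluating $\sum_{c}c^{-1}c_c(\ell)c^{z-1}=\sigma_{z-1}(\ell)/\zeta(2-z)$.

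Without this weight the argument cannot close, for two reasons.
\begin{itemize}
\item At $z=1+2it$ you get $\ell^{(1-z)/2}\sigma_z(\ell)=\sum_{d\mid\ell}d\,(d^2/\ell)^{it}$. This is not $\tau_t(\ell)=\sum_{d\mid\ell}(d^2/\ell)^{it}$ times any power of $\ell$: for $\ell=2$ it is $2^{-it}+2\cdot 2^{it}$ versus $2^{-it}+2^{it}$.
\item The zeta quotient becomes $\zeta(1+4it)/\zeta(-2it)=\zeta(1+4it)/\bigl(G_0(1+2it)\zeta(1+2it)\bigr)$. This pairs $\widetilde g(1+2it)$ with the wrong denominator and leaves a gamma factor.
\end{itemize}
Your ``main obstacle'' comes from the same slip. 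With $1/\zeta(1-z)$, moving from $\operatorname{Re}z=\sigma$ to $\operatorname{Re}z=1$ really does cross poles at $z=1-\rho$ for every nontrivial zero $\rho$. The target line also carries an uncancelled pole of $\zeta(2z-1)$ at $z=1$, since there $\zeta(1-z)=\zeta(0)\neq0$. Neither of your remedies helps: the functional equation does not remove zeros of $\zeta$, and twisting $g$ by $x^{w}$ does not move them.

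With the correct weight there is no obstacle. Your Step~3 simplifications give $\mathcal{Z}_{\ell,h}(z)(2\pi)^{1-z}/\Gamma(2-z)=-4h\bigl(\tfrac{1-z}{2i}\bigr)G_0(2-2z)$, and $G_0(2-2z)\zeta(2-2z)=\zeta(2z-1)$. Hence
\[
I_\ell^{\vee,0}(g;h)=-\frac{1}{\pi i}\int_{(\sigma)}\widetilde g(z)\,h\Bigl(\frac{1-z}{2i}\Bigr)\ell^{\frac{1-z}{2}}\sigma_{z-1}(\ell)\,\frac{\zeta(2z-1)}{\zeta(2-z)}\,dz .
\]
This integrand is holomorphic on $\sigma\le\operatorname{Re}z\le 1$:
\begin{itemize}
\item $1/\zeta(2-z)$ has no poles there, since $\operatorname{Re}(2-z)\ge 1$;
\item its simple zero at $z=1$ cancels the pole of $\zeta(2z-1)$;
\item $h\bigl(\frac{1-z}{2i}\bigr)$ stays in its strip of holomorphy.
\end{itemize}
By the rapid decay of $\widetilde g$ and $h$, you may move the contour to $\operatorname{Re}z=1$ with no residues. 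Then $z=1+2it$ gives exactly $\ell^{(1-z)/2}\sigma_{z-1}(\ell)=\tau_t(\ell)$ and the quotient $\zeta(1+4it)/\zeta(1-2it)$. The Jacobian turns $-\frac{1}{\pi i}$ into $-\frac{2}{\pi}$, and symmetrizing in $t$ via evenness gives the lemma.

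This is the paper's route. The paper sets $z=1+2it$ without comment, and its intermediate display drops $\zeta(2z-1)/\zeta(2-z)$, so the holomorphy check above is what actually justifies that step.
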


\begin{proof}
From the definition,
\[
I_\ell^{\vee,0}(g;h)=\frac1{4\pi i}\int_{(\sigma)}\widetilde{g}(z)\zeta(2-2z)\sum_{c\ge1}\sideset{}{^*}\sum_{x\bmod c}e\left(\frac{\ell\bar x}{c}\right)\lim_{y\to0^+}y^{z-1}\mathcal{J}_{h_{1-z}}\left(\frac{4\pi y\sqrt\ell}{c}\right)dz.
\]

Using the decomposition $\mathcal{J}_{h_{1-z}}(y)=\mathcal{Z}_{\ell,h}(z)J_{1-z}(y)+R_{\ell,h}(y)$ and the asymptotics $J_{1-z}(y)\sim y^{1-z}/2^{1-z}\Gamma(2-z)$ as $y\to0^+$, we obtain
\[
\lim_{y\to0^+}y^{z-1}\mathcal{J}_{h_{1-z}}\left(\frac{4\pi y\sqrt\ell}{c}\right)=\mathcal{Z}_{\ell,h}(z)\frac{(4\pi\sqrt\ell/c)^{1-z}}{2^{1-z}\Gamma(2-z)},
\]
with $\mathcal{Z}_{\ell,h}(z)$ given in \eqref{equ:Z-ell-h}. Substituting and evaluating the Ramanujan sum gives
\[
\sum_{c\ge1}\frac1c\sideset{}{^*}\sum_{x\bmod c}e\left(\frac{\ell\bar x}{c}\right)\left(\frac{4\pi\sqrt\ell}{c}\right)^{1-z}=\frac{(4\pi\sqrt\ell)^{1-z}}{\zeta(2-z)}\sigma_{z-1}(\ell).
\]

Simplifying using $\zeta(1-s)=G_0(s)\zeta(s)$ and $\Gamma(2-z)=(1-z)\Gamma(1-z)$ leads to
\[
I_\ell^{\vee,0}(g;h)=-\frac1{\pi i}\int_{(\sigma)}\widetilde{g}(z)\ell^{\frac{1-z}{2}}h\left(\frac{1-z}{2i}\right)\sigma_{z-1}(\ell)dz.
\]
Now set $z=1+2it$. Then $(1-z)/(2i)=-t$, $\ell^{(1-z)/2}\sigma_{z-1}(\ell)=\tau_t(\ell)$, $dz=2i\,dt$. Using evenness of $h$ and $\tau_{-t}(\ell)=\tau_t(\ell)$ gives the stated formula.
\end{proof}

Combining the two lemmas, we obtain
\[
I_\ell^0(g;h)-I_\ell^{\vee,0}(g;h)=\frac1\pi\int_{-\infty}^{\infty}h(t)\tau_t(\ell)\left\{\widetilde{g}(1-2it)\frac{\zeta(1-4it)}{\zeta(1+2it)}+\widetilde{g}(1)+\widetilde{g}(1+2it)\frac{\zeta(1+4it)}{\zeta(1-2it)}\right\}dt.
\]

It remains to prove
\begin{equation}\label{equ:maass-singular-diag}
I_\ell^{\mathrm{sing}}(g;h)+I_\ell^{\mathrm{diag}}(g;h)=I_\ell^{\vee,\mathrm{sing}}(g;h)+I_\ell^{\vee,\mathrm{diag}}(g;h).
\end{equation}

\vspace{0.5cm}
\emph{Step 6: Proof of the singular-diagonal equality.}\quad
If $\ell$ is not a square, there is nothing to prove. Assume $\ell=m^2$ with $m\ge1$. For $\sigma\in(-\frac12,0)$,
\begin{align*}
I_\ell^{\mathrm{sing}}(g;h)&=\frac{1}{2\pi i}\int_{(\sigma)}\widetilde{g}(z)\zeta(2z-1)\mathcal{J}_{2m,m^2,h}(1-z)dz-\widetilde{g}\left(\frac12\right)\operatorname*{Res}_{z=\frac12}\zeta(2z-1)\mathcal{J}_{2m,m^2,h}(z),\\[4pt]
I_\ell^{\mathrm{diag}}(g;h)&=\frac{1}{2\pi i}\int_{(\sigma)}H_0(h)\widetilde{g}(1-z)\zeta(2-2z)m^{z-1}dz,
\end{align*}
and
\begin{align*}
I_\ell^{\vee,\mathrm{sing}}(g;h)&=\frac{1}{2\pi i}\int_{(\sigma)}\widetilde{g}(z)\zeta(1-2z)\mathcal{J}_{2m,m^2,h_{1-z}}(z)dz,\\[4pt]
I_\ell^{\vee,\mathrm{diag}}(g;h)&=\frac{1}{2\pi i}\int_{(\sigma)}H_0(h_{1-z})\widetilde{g}(z)\zeta(2-2z)m^{z-1}dz.
\end{align*}

Denote $d(t):=t\tanh(\pi t)/\pi^2$. By Lemma \ref{lem:J-2m-m^2-t-fe},
\begin{equation}\label{equ:maass-critical-evaluation}
G_0(2z)\mathcal{J}_{2m,m^2,t}(z)=d(t)m^{-z}\mathcal{G}_{\mu_t}(z).
\end{equation}

\begin{lemma}\label{lem:maass-critical-singular}
We have the pointwise identity
\begin{multline}\label{equ:maass-critical-singular-pointwise}
\zeta(1-2z)\mathcal{J}_{2m,m^2,t}(z)+d(t)m^{z-1}\zeta(2-2z)\\
=\mathcal{G}_{\mu_t}(z)\Bigl(\zeta(2z-1)\mathcal{J}_{2m,m^2,t}(1-z)+d(t)m^{-z}\zeta(2z)\Bigr)
\end{multline}
and its averaged version
\begin{multline}\label{equ:maass-critical-singular-averaged}
\zeta(1-2z)\mathcal{J}_{2m,m^2,h}(z)+H_0(h)m^{z-1}\zeta(2-2z)\\
=\zeta(2z-1)\mathcal{J}_{2m,m^2,h_z}(1-z)+H_0(h_z)m^{-z}\zeta(2z),
\end{multline}
where $h_z(t)=h(t)\mathcal{G}_{\mu_t}(z)$.
\end{lemma}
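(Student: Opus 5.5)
The pointwise identity will follow from the closed-form evaluation \eqref{equ:maass-critical-evaluation}, so I plan to reduce everything to the zeta functional equation and the relation $\mathcal{G}_{\mu_t}(z)\mathcal{G}_{\mu_t}(1-z)=1$. By Lemma \ref{lem:J-2m-m^2-t-fe},
\[
\mathcal{J}_{2m,m^2,t}(z)=d(t)m^{-z}\mathcal{G}_{\mu_t}(z)G_0(2z)^{-1},\qquad
\mathcal{J}_{2m,m^2,t}(1-z)=d(t)m^{z-1}\mathcal{G}_{\mu_t}(1-z)G_0(2-2z)^{-1}.
\]
I will use $\zeta(1-s)=G_0(s)\zeta(s)$ with $s=2z$ and $s=2z-1$, that is, $\zeta(1-2z)=G_0(2z)\zeta(2z)$ and $\zeta(2-2z)=G_0(2z-1)\zeta(2z-1)$. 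I will also need $G_0(s)G_0(1-s)=1$, which is immediate from $G_0(s)=\Gamma_{\mathbb{R}}(s)/\Gamma_{\mathbb{R}}(1-s)$; in particular $G_0(2-2z)^{-1}=G_0(2z-1)$.

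Substituting gives the two terms. On the left,
\[
\zeta(1-2z)\mathcal{J}_{2m,m^2,t}(z)=d(t)m^{-z}\mathcal{G}_{\mu_t}(z)\zeta(2z),
\]
which is exactly $\mathcal{G}_{\mu_t}(z)\cdot d(t)m^{-z}\zeta(2z)$, the second term on the right of \eqref{equ:maass-critical-singular-pointwise}. On the right,
\[
\mathcal{G}_{\mu_t}(z)\zeta(2z-1)\mathcal{J}_{2m,m^2,t}(1-z)=d(t)m^{z-1}\mathcal{G}_{\mu_t}(z)\mathcal{G}_{\mu_t}(1-z)G_0(2z-1)\zeta(2z-1)=d(t)m^{z-1}\zeta(2-2z),
\]
which is the second term on the left. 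The two sides therefore agree term by term, with the roles crossed. This gives the identity as meromorphic functions of $z$ for each $t\in\mathbb{R}$. I would first verify it on a region where everything is holomorphic, namely away from the poles of $\mathcal{G}_{\mu_t}$ and away from $z=\tfrac12$, and then extend it by analytic continuation.

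For the averaged version, I multiply \eqref{equ:maass-critical-singular-pointwise} by $h(t)$ and integrate over $t\in\mathbb{R}$. The left side becomes $\zeta(1-2z)\mathcal{J}_{2m,m^2,h}(z)+H_0(h)m^{z-1}\zeta(2-2z)$ by the definitions \eqref{equ:J-n-ell-h-defn} and $H_0(h)=\int h(t)d(t)\,dt$. On the right, the factor $\mathcal{G}_{\mu_t}(z)$ is absorbed into the test function $h_z(t)=h(t)\mathcal{G}_{\mu_t}(z)$, which gives $\zeta(2z-1)\mathcal{J}_{2m,m^2,h_z}(1-z)+H_0(h_z)m^{-z}\zeta(2z)$.

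The main point to check is the legitimacy of this integration. For fixed $z$ in a vertical strip, $\mathcal{G}_{\mu_t}(z)$ grows at most polynomially in $t$ by Stirling's formula, and $d(t)$ and the closed form of $\mathcal{J}_{2m,m^2,t}$ are likewise polynomially bounded. The rapid decay of $h$ therefore makes all the $t$-integrals absolutely convergent. For real $t$, the $t$-dependent poles of $\mathcal{G}_{\mu_t}(z)$ lie on $\operatorname{Re}(z)\in\{0,-2,\dots\}$ or $\operatorname{Re}(z)\in\{1,3,\dots\}$ (for $\mathcal{G}_{\mu_t}(1-z)$). I would therefore first establish the averaged identity for $z$ in an open strip avoiding these lines, such as $0<\operatorname{Re}(z)<1$ with $z\neq\tfrac12$. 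This is also where the averaged kernels are defined by the integral \eqref{equ:J-n-ell-h-defn}. The identity then extends to the rest of the domain by meromorphic continuation of both sides.
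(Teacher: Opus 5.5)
Your proposal is correct and follows the paper's proof essentially step for step. You substitute the closed form of Lemma \ref{lem:J-2m-m^2-t-fe} at $z$ and at $1-z$, then use $\zeta(1-s)=G_0(s)\zeta(s)$, $\mathcal{G}_{\mu_t}(z)\mathcal{G}_{\mu_t}(1-z)=1$ and $G_0(2z-1)G_0(2-2z)=1$ to match the terms crosswise, and finally integrate against $h(t)$. Your added justification of that last step, namely polynomial growth in $t$ against the rapid decay of $h$ on strips avoiding the lines swept out by the $t$-dependent poles, is something the paper leaves implicit.
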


\begin{proof}
The Riemann functional equation $\zeta(1-s)=G_0(s)\zeta(s)$ turns \eqref{equ:maass-critical-evaluation} into
\[
\zeta(1-2z)\mathcal{J}_{2m,m^2,t}(z)=d(t)m^{-z}\mathcal{G}_{\mu_t}(z)\zeta(2z).
\]
Replacing $z$ by $1-z$ in \eqref{equ:maass-critical-evaluation} and using $\mathcal{G}_{\mu_t}(z)\mathcal{G}_{\mu_t}(1-z)=1$ and $G_0(2z-1)G_0(2-2z)=1$ gives
\[
d(t)m^{z-1}\zeta(2-2z)=\mathcal{G}_{\mu_t}(z)\zeta(2z-1)\mathcal{J}_{2m,m^2,t}(1-z).
\]
Adding these two identities proves \eqref{equ:maass-critical-singular-pointwise}. Integrating against $h(t)$ yields \eqref{equ:maass-critical-singular-averaged}.
\end{proof}

From the pointwise identities, multiplying by $h(t)\mathcal{G}_{\mu_t}(1-z)$ and integrating gives
\[
\zeta(1-2z)\mathcal{J}_{2m,m^2,h_{1-z}}(z)=H_0(h)m^{-z}\zeta(2z),\qquad
H_0(h_{1-z})m^{z-1}\zeta(2-2z)=\zeta(2z-1)\mathcal{J}_{2m,m^2,h}(1-z).
\]
Since $\mathcal{G}_{\mu_t}(1/2)=1$, we have $H_0(h_{1/2})=H_0(h)$, and $\zeta(0)=-1/2$ implies
\[
\operatorname*{Res}_{z=1/2}\mathcal{J}_{2m,m^2,h}(z)=-H_0(h)m^{-1/2}.
\]

Substituting these into the expressions for $I_\ell^{\mathrm{sing}}$ and $I_\ell^{\mathrm{diag}}$, we obtain
\begin{align*}
I_\ell^{\mathrm{sing}}(g;h)&=\frac{1}{2\pi i}\int_{(\sigma)}\widetilde{g}(z)\zeta(2z-1)\mathcal{J}_{2m,m^2,h}(1-z)dz+\frac12\widetilde{g}\left(\frac12\right)H_0(h)m^{-1/2},\\
I_\ell^{\mathrm{diag}}(g;h)&=\frac{1}{2\pi i}\int_{(\sigma)}H_0(h)\widetilde{g}(z)\zeta(2z)m^{-z}dz-\frac12\widetilde{g}\left(\frac12\right)H_0(h)m^{-1/2}.
\end{align*}
Adding them cancels the residue terms:
\[
I_\ell^{\mathrm{sing}}(g;h)+I_\ell^{\mathrm{diag}}(g;h)=\frac{1}{2\pi i}\int_{(\sigma)}\widetilde{g}(z)\Bigl[\zeta(2z-1)\mathcal{J}_{2m,m^2,h}(1-z)+H_0(h)\zeta(2z)m^{-z}\Bigr]dz.
\]

Finally, applying \eqref{equ:maass-critical-singular-averaged} transforms the bracket into $\zeta(1-2z)\mathcal{J}_{2m,m^2,h_{1-z}}(z)+H_0(h_{1-z})m^{z-1}\zeta(2-2z)$, which is exactly the integrand of $I_\ell^{\vee,\mathrm{sing}}(g;h)+I_\ell^{\vee,\mathrm{diag}}(g;h)$. Hence
\[
I_\ell^{\mathrm{sing}}(g;h)+I_\ell^{\mathrm{diag}}(g;h)=I_\ell^{\vee,\mathrm{sing}}(g;h)+I_\ell^{\vee,\mathrm{diag}}(g;h).
\]

This completes the proof of Theorem \ref{thm:maass-voronoi}.

\section{From Voronoi to Functional Equation}
\subsection{Holomorphic case}

\begin{lemma}\label{lem:Fg-in-S}
Let $g\in\mathcal{S}$ and define
\begin{equation}\label{equ:F-g-x}
F_g(x)=\sumh_{f\in B_{2k}} a_f(\ell)\sum_{n=1}^\infty A_f(n)g(nx),\qquad x>0.
\end{equation}
Then $F_g\in\mathcal{S}$.
\end{lemma}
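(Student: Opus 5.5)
The sum over $f$ is finite: $B_{2k}$ is a finite basis of $S_{2k}$. So it suffices to show that for each fixed Hecke eigenform $f$ the function
\[
F_f(x)=\sum_{n\geqslant1}A_f(n)g(nx)
\]
lies in $\mathcal S$. Then $F_g$ is a finite linear combination of such functions. The plan is to work on the Mellin side and invoke Lemma~\ref{lem:mellin-Schwartz}, or alternatively to verify the defining seminorm bounds directly at each end.

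\emph{Behaviour as $x\to\infty$ and smoothness.} By Deligne's bound, or just the trivial Hecke bound $a_f(n)\ll n^{1/2+\epsilon}$, we have $A_f(n)\ll n^{C}$ for some fixed $C$. Differentiating termwise gives $F_f^{(j)}(x)=\sum_n A_f(n)n^j g^{(j)}(nx)$. Using $|g^{(j)}(y)|\ll_M y^{-M}$, each such series converges locally uniformly on $(0,\infty)$, so $F_f$ is smooth. For $x\geqslant1$ we get
\[
F_f^{(j)}(x)\ll \sum_n n^{C+j}(nx)^{-M}\ll x^{-M}
\]
once $M>C+j+1$. This gives rapid decay of all derivatives at infinity.

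\emph{Behaviour as $x\to0^+$ (main step).} For $\operatorname{Re}(s)$ large, the Mellin transform is $\widetilde{F_f}(s)=\widetilde g(s)L(s,\operatorname{Sym}^2 f)$. Naively, shifting the contour leftward would require the analytic continuation of $L(s,\operatorname{Sym}^2 f)$, which is exactly what we are trying to prove. So instead we use Theorem~\ref{thm:holo-voronoi} in its scaled form. The dilation $g_x(y)=g(xy)$ has $\widetilde{g_x}(s)=x^{-s}\widetilde g(s)$, and hence
\[
\widetilde{\mathcal T_\mu g_x}(s)=x^{s-1}\widetilde g(1-s)\mathcal G_\mu(s),
\]
so $\mathcal T_\mu g_x(y)=x^{-1}(\mathcal T_\mu g)(y/x)$. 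Since $\ell$ is fixed, Theorem~\ref{thm:holo-voronoi} then gives
\[
F_g(x)=x^{-1}\,\sumh_{f} a_f(\ell)\sum_n A_f(n)(\mathcal T_\mu g)(n/x).
\]
By Lemma~\ref{lem:Tmug-props}, $\mathcal T_\mu g$ and all its derivatives decay faster than any power at infinity. Repeating the $x\to\infty$ estimate with $\mathcal T_\mu g$ in place of $g$ and argument $n/x$ with $x\to0$ gives $F_g^{(j)}(x)\ll x^{M}$ for every $M$. Derivatives in $x$ hit both the factor $x^{-1}$ and the argument $n/x$, producing terms of the form $x^{-a}n^b(\mathcal T_\mu g)^{(i)}(n/x)$, each still rapidly decaying.

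The main obstacle is this $x\to0$ step, and in particular justifying that the identity of Theorem~\ref{thm:holo-voronoi} applies to the family $g_x$ uniformly in $x$. This is immediate, since $g_x\in\mathcal S$ for each $x$. The key point is that the argument works at the level of the full weighted sum $F_g$, rather than for individual $f$. This is why we apply the theorem to $F_g$ directly, combining it with the termwise estimates above.
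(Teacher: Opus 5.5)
Your proposal is correct and is essentially the paper's argument: apply Theorem~\ref{thm:holo-voronoi} to the dilate $g_x$, use $\mathcal T_\mu g_x(y)=x^{-1}(\mathcal T_\mu g)(y/x)$ together with the rapid decay of $\mathcal T_\mu g$ from Lemma~\ref{lem:Tmug-props} to get decay as $x\to0^+$, and get decay as $x\to\infty$ directly from $g\in\mathcal S$ and the polynomial growth of $A_f(n)$. Your write-up is slightly more complete than the paper's, since you also control the derivatives $F_g^{(j)}$ required by the definition of $\mathcal S$; note that your opening reduction to a single $f$ only serves the $x\to\infty$ end, as you acknowledge later.
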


\begin{proof}
For $x>0$, set $g_x(t)=g(xt)$. By Theorem \ref{thm:holo-voronoi},
\[
F_g(x)=I_\ell(g_x)=I_\ell(\mathcal{T}_\mu g_x)=\sumh_{f\in B_{2k}} a_f(\ell)\sum_{n\ge1}A_f(n)\mathcal{T}_\mu g_x(n).
\]
A direct computation using Mellin inversion gives
\[
\mathcal{T}_\mu g_x(n)=\frac1x\,\mathcal{T}_\mu g\left(\frac{n}{x}\right).
\]
Thus
\[
F_g(x)=\frac1x\sumh_{f\in B_{2k}} a_f(\ell)\sum_{n\ge1}A_f(n)\mathcal{T}_\mu g\left(\frac{n}{x}\right).
\]
By Lemma \ref{lem:Tmug-props}, $\mathcal{T}_\mu g$ decays rapidly at infinity, so $F_g$ decays rapidly as $x\to0^+$. The rapid decay as $x\to\infty$ follows directly from $g\in\mathcal{S}$. Hence $F_g\in\mathcal{S}$.
\end{proof}

\begin{corollary}\label{cor:A-ell-meromorphic}
For each $\ell\ge1$, the averaged $L$-function
\[
A_\ell(s)=\sumh_{f\in B_{2k}} a_f(\ell)L(s,\operatorname{Sym}^2f)
\]
is meromorphic on $\mathbb{C}$.
\end{corollary}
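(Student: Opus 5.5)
The plan is to take the Mellin transform of the function $F_g$ from Lemma \ref{lem:Fg-in-S} and use that $F_g\in\mathcal{S}$. For $\operatorname{Re}(s)>1$ the Dirichlet series $\sum_n A_f(n)n^{-s}$ converges absolutely for each $f$, and the sum over $f\in B_{2k}$ is finite. Interchanging sum and integral therefore gives
\[
\widetilde{F_g}(s)=\int_0^\infty F_g(x)x^{s-1}\,dx=\sumh_{f\in B_{2k}} a_f(\ell)\sum_{n\ge1}A_f(n)n^{-s}\,\widetilde g(s)=\widetilde g(s)\,A_\ell(s),\qquad \operatorname{Re}(s)>1,
\]
using $\int_0^\infty g(nx)x^{s-1}dx=n^{-s}\widetilde g(s)$. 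Since $F_g\in\mathcal{S}$ by Lemma \ref{lem:Fg-in-S}, $\widetilde{F_g}$ is entire. Hence $A_\ell(s)=\widetilde{F_g}(s)/\widetilde g(s)$ on $\operatorname{Re}(s)>1$, and the right-hand side is meromorphic on $\mathbb{C}$ as soon as $\widetilde g\not\equiv 0$.

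To make this rigorous I fix one $g\in\mathcal{S}$ with $\widetilde g$ not identically zero. For instance, take $g(x)=\exp(-x-1/x)$. Its Mellin transform is $2K_s(2)$, which is entire and not identically zero. Alternatively, by Lemma \ref{lem:mellin-Schwartz} one may take $\widetilde g(s)=e^{s^2}$, which has no zeros at all. With this choice $\widetilde{F_g}/\widetilde g$ is entire, and it gives the continuation of $A_\ell$ directly. In either case the quotient is meromorphic on $\mathbb{C}$ and agrees with $A_\ell(s)$ for $\operatorname{Re}(s)>1$, so it furnishes the meromorphic continuation.

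\textbf{Main obstacle.} The only delicate point is justifying the interchange in the first display, together with the identification of $\widetilde{F_g}$. Absolute convergence of $\sum_n |A_f(n)|\int_0^\infty |g(nx)|x^{\sigma-1}dx=\sum_n |A_f(n)| n^{-\sigma}\int_0^\infty |g(x)|x^{\sigma-1}dx$ for $\sigma>1$ settles this. It uses the Deligne or Rankin--Selberg bound $A_f(n)\ll n^{\varepsilon}$, or merely polynomial growth, in which case one takes $\sigma$ large and continues analytically.

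All of the analytic depth sits in Theorem \ref{thm:holo-voronoi}, through Lemma \ref{lem:Fg-in-S}. The Schwartz property at $x\to 0^+$, which is what makes $\widetilde{F_g}$ entire rather than merely holomorphic in a right half-plane, comes from the Voronoi identity. The corollary is then a formal consequence.
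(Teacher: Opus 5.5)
Your proof is correct and matches the paper's: Mellin-transform $F_g$ to get $\widetilde{F_g}(s)=\widetilde g(s)A_\ell(s)$ for $\operatorname{Re}(s)>1$, use $F_g\in\mathcal{S}$ (Lemma \ref{lem:Fg-in-S}) to see that $\widetilde{F_g}$ is entire, and divide by $\widetilde g$. Your additions are sound: an explicit $g$ with $\widetilde g\not\equiv 0$ (the choice $\widetilde g(s)=e^{s^2}$ even gives entireness at once), and the Fubini justification; the one slip is that the pointwise bound $A_f(n)\ll n^{\varepsilon}$ comes from Deligne alone, not from Rankin--Selberg, though your polynomial-growth fallback covers this anyway.
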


\begin{proof}
For $\operatorname{Re}(s)>1$, applying Mellin transform to \eqref{equ:F-g-x} gives $\widetilde{F_g}(s)=A_\ell(s)\widetilde{g}(s)$. Since $F_g\in\mathcal{S}$, $\widetilde{F_g}$ is entire, and $A_\ell(s)=\widetilde{F_g}(s)/\widetilde{g}(s)$ extends meromorphically.
\end{proof}

\begin{corollary}\label{cor:A-ell-fe}
For any $g\in\mathcal{S}$,
\[
A_\ell(s)\widetilde{g}(s)=A_\ell(1-s)\mathcal{G}_\mu(1-s)\widetilde{g}(s).
\]
\end{corollary}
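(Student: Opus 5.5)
We take Mellin transforms of both sides of the identity $F_g(x)=I_\ell(g_x)=I_\ell(\mathcal{T}_\mu g_x)$ from the proof of Lemma \ref{lem:Fg-in-S}. Throughout, $g_x(t)=g(xt)$.

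\emph{Left-hand side.} For $\operatorname{Re}(s)>1$, $\widetilde{g_x}$-scaling gives
\[
\int_0^\infty F_g(x)x^{s-1}dx=A_\ell(s)\widetilde g(s).
\]
The interchange of the $n$-sum with the Mellin integral is justified by absolute convergence of $\sum_n |A_f(n)|n^{-\operatorname{Re}s}$ and the finiteness of $B_{2k}$.

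\emph{Right-hand side.} We use the representation
\[
F_g(x)=\frac1x\sumh_{f\in B_{2k}} a_f(\ell)\sum_{n\ge1}A_f(n)\,\mathcal{T}_\mu g\!\left(\frac nx\right).
\]
Take $\operatorname{Re}(s)<0$, so that $\operatorname{Re}(1-s)>1$. Substituting $y=n/x$ gives
\[
\int_0^\infty x^{s-2}\,\mathcal{T}_\mu g(n/x)\,dx=n^{s-1}\,\widetilde{\mathcal{T}_\mu g}(1-s).
\]
Summing over $n$ yields $A_\ell(1-s)\,\widetilde{\mathcal{T}_\mu g}(1-s)$. By Definition \ref{def:T_mu_holo}, $\widetilde{\mathcal{T}_\mu g}(1-s)=\widetilde g(s)\mathcal{G}_\mu(1-s)$.

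This computation needs absolute convergence of $\int_0^\infty x^{\operatorname{Re}s-2}|\mathcal{T}_\mu g(n/x)|dx=n^{\operatorname{Re}s-1}\int_0^\infty y^{-\operatorname{Re}s}|\mathcal{T}_\mu g(y)|dy$. Lemma \ref{lem:Tmug-props} supplies this: $\mathcal{T}_\mu g(y)=O(y)$ at $0$ and it decays rapidly at $\infty$. So the integral converges for $-1<\operatorname{Re}(s)<0$, say, where $\widetilde{\mathcal{T}_\mu g}(1-s)$ is also given by its convergent integral since $\operatorname{Re}(1-s)>0$. The resulting $n$-sum converges absolutely because $\operatorname{Re}(1-s)>1$.

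\emph{Comparison.} Since $F_g\in\mathcal{S}$, $\widetilde{F_g}$ is entire and is given by the same absolutely convergent integral on every vertical line. Hence the two expressions are restrictions of one entire function:
\begin{itemize}
\item $A_\ell(s)\widetilde g(s)$ on $\operatorname{Re}(s)>1$;
\item $A_\ell(1-s)\mathcal{G}_\mu(1-s)\widetilde g(s)$ on $-1<\operatorname{Re}(s)<0$.
\end{itemize}
By Corollary \ref{cor:A-ell-meromorphic}, $A_\ell$ is meromorphic, and $\mathcal{G}_\mu$ is meromorphic. Both sides therefore extend meromorphically to $\mathbb{C}$. Since each agrees with $\widetilde{F_g}$ on a nonempty open set, the identity theorem gives $A_\ell(s)\widetilde g(s)=\widetilde{F_g}(s)=A_\ell(1-s)\mathcal{G}_\mu(1-s)\widetilde g(s)$ as meromorphic functions on $\mathbb{C}$.

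The main obstacle is not the algebra but the justification step. In particular, the Mellin transform of the dual side converges only in a restricted strip, because $\mathcal{T}_\mu g$ is not Schwartz near $0$; it is here that the bound $O(y)$ of Lemma \ref{lem:Tmug-props} is essential. One must also make sure that this strip overlaps the region where $A_\ell(1-s)$ is given by its Dirichlet series, which requires $\operatorname{Re}(s)<0$.
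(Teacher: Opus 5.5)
Your proposal is correct and is essentially the paper's argument: the paper gives no separate proof of this corollary, and the intended reasoning (written out in its Maass analogue, Corollary \ref{cor:maass-avg-meromorphic}) is to take Mellin transforms of the two expressions for $F_g$ from Lemma \ref{lem:Fg-in-S} and compare them. Your version adds the convergence regions ($\operatorname{Re}(s)>1$ and $\operatorname{Re}(s)<0$, using $\mathcal{T}_\mu g(y)=O(y)$) and the identity-theorem step that the paper leaves implicit, and it correctly obtains $\widetilde{\mathcal{T}_\mu g}(1-s)$ where the paper's Maass analogue writes $\widetilde{\mathcal{T}_{\mu_{t_0}}g}(s)$.
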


\begin{proof}[Proof of Theorem \ref{thm:holo-fe}]
Let $B_{2k}=\{f_1,\dots,f_d\}$. Then the vectors $(a_{f_1}(\ell))_{\ell\geqslant 1}, (a_{f_2}(\ell))_{\ell\geqslant 1},\dots, (a_{f_d}(\ell))_{\ell\geqslant 1}$ are linearly independent. So there exists integers $\ell_1<\cdots<\ell_d$ such that the matrix $(a_{f_i}(\ell_j))$ is invertible. Then
\[
\left(\frac{L(s,\operatorname{Sym}^2f_1)}{\|f_1\|},\dots,\frac{L(s,\operatorname{Sym}^2f_d)}{\|f_d\|}\right)=\frac{(4\pi)^{2k-1}}{\Gamma(2k-1)}(A_{\ell_1}(s),\dots,A_{\ell_d}(s))B^{-1}.
\]
So each $L(s,\operatorname{Sym}^2f_i)$ is a linear combination of $A_{\ell_j}(s)$. Since $A_\ell(s)=A_\ell(1-s)\mathcal{G}_\mu(1-s)$ by Corollary \ref{cor:A-ell-fe}, the functional equation follows.

Next we prove that $L(s,\operatorname{Sym}^2f)$ is entire. Let $s_0\in\mathbb{C}$ and pick $g\in\mathcal{S}$ with $\widetilde{g}(s_0)\neq0$ (If $\widetilde{g}(s_0)=0$, by Lemma \ref{lem:mellin-Schwartz}, the Mellin inversion of $\frac{\widetilde{g}(s)}{s-s_0}$ is also in $\mathcal{S}$). Since $\widetilde{F_g}(s)$ is entire, $A_\ell(s)=\widetilde{F_g}(s)/\widetilde{g}(s)$ is analytic at $s_0$. Hence each $A_\ell(s)$ is entire, and consequently each $L(s,\operatorname{Sym}^2f)$ is entire.

\end{proof}

\subsection{Maass case}

For $\operatorname{Re}(s)>1$, define
\[
L(s,\operatorname{Sym}^2 u)=\sum_{n=1}^{\infty}\frac{A_u(n)}{n^s}=\zeta(2s)\sum_{m=1}^{\infty}\frac{\lambda_u(m^2)}{m^s},
\]
and similarly 
\[
L(s,\operatorname{Sym}^2 t)=\sum_{n=1}^{\infty}\frac{A_t(n)}{n^s}=\zeta(2s)\sum_{m=1}^{\infty}\frac{\tau_t(m^2)}{m^s},
\]
By checking the Euler product, one sees that $L(s,\operatorname{Sym}^2,t)=\zeta(s+2it)\zeta(s)\zeta(s-2it)$.

\begin{lemma}[Continuous Eisenstein Voronoi formula]\label{lem:maass-eisenstein-voronoi}
For $g\in C_c^\infty(\mathbb{R}_{>0})$ and fixed $t\in\mathbb{R}$,
\[
\sum_{n=1}^{\infty}A_t(n)g(n)=\sum_{n=1}^{\infty}A_t(n)(\mathcal{T}_{\mu_t}g)(n)+\mathcal{P}_t(g),
\]
where 
\[
\mathcal{P}_g(t)=\widetilde g(1-2it)\frac{\zeta(1-4it)}{\zeta(1+2it)}
+\widetilde g(1)
+\widetilde g(1+2it)\frac{\zeta(1+4it)}{\zeta(1-2it)}.
\]
\end{lemma}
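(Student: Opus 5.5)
The plan is the classical Mellin-transform proof, applied to the explicit factorization $L(s,\operatorname{Sym}^2 t)=\zeta(s+2it)\zeta(s)\zeta(s-2it)$ recorded just above. Fix $t\in\mathbb{R}$, first with $t\neq0$. Since $g\in C_c^\infty(\mathbb{R}_{>0})\subset\mathcal{S}$, $\widetilde g$ is entire and decays faster than any power of $|\operatorname{Im}s|$ on vertical strips. Mellin inversion and absolute convergence for $\operatorname{Re}(s)=2$ give
\[
\sum_{n\ge1}A_t(n)g(n)=\frac1{2\pi i}\int_{(2)}\widetilde g(s)\,\zeta(s+2it)\zeta(s)\zeta(s-2it)\,ds .
\]

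Next I shift the contour to $\operatorname{Re}(s)=-1$. The shift is justified because each $\zeta$ factor grows at most polynomially in $|\operatorname{Im}s|$ on the strip $-1\le\operatorname{Re}(s)\le2$ (convexity), while $\widetilde g$ decays rapidly there. For $t\neq0$ the integrand has three simple poles, at $s=1-2it$, $s=1$ and $s=1+2it$, with residues
\[
\widetilde g(1-2it)\zeta(1-4it)\zeta(1-2it),\qquad \widetilde g(1)\zeta(1+2it)\zeta(1-2it),\qquad \widetilde g(1+2it)\zeta(1+4it)\zeta(1+2it).
\]
Their sum is exactly $|\zeta(1+2it)|^2\,\mathcal{P}_g(t)$, with $\mathcal{P}_g(t)$ as in the statement. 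So the polar term in the lemma should be read with the weight $|\zeta(1+2it)|^2$, i.e.\ $\mathcal{P}_t(g):=|\zeta(1+2it)|^2\mathcal{P}_g(t)$. This is precisely the normalization under which, after weighting by $h(t)\tau_t(\ell)/(\pi|\zeta(1+2it)|^2)$ and integrating in $t$, it reproduces the right-hand side of \eqref{equ:maass-voronoi}. I would state this bookkeeping explicitly in the proof.

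For the remaining integral on $\operatorname{Re}(s)=-1$, I use the relation $\zeta(s)=G_0(1-s)\zeta(1-s)$ for each of the three factors, i.e.\ with $s$ replaced by $s+2it$, $s$, $s-2it$. This gives $L(s,\operatorname{Sym}^2 t)=\mathcal{G}_{\mu_t}(1-s)L(1-s,\operatorname{Sym}^2 t)$. Substituting $s=1-w$ turns the integral into
\[
\frac1{2\pi i}\int_{(2)}\widetilde g(1-w)\,\mathcal{G}_{\mu_t}(w)\,L(w,\operatorname{Sym}^2 t)\,dw .
\]
By Definition \ref{def:T-mu-t}, the function $\widetilde g(1-w)\mathcal{G}_{\mu_t}(w)$ is the Mellin transform of $\mathcal{T}_{\mu_t}g$, and the contour $\operatorname{Re}(w)=2>0$ is admissible since $t$ is real. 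Expanding $L(w,\operatorname{Sym}^2 t)$ as an absolutely convergent Dirichlet series and interchanging sum and integral then gives $\sum_n A_t(n)(\mathcal{T}_{\mu_t}g)(n)$. The rapid decay of $\mathcal{T}_{\mu_t}g$ from Lemma \ref{lem:T_mu_t_props} guarantees that this sum converges.

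The only delicate point is the degenerate case $t=0$, where the three poles merge into a triple pole at $s=1$ and the individual terms of $\mathcal{P}_g(t)$ blow up. I would handle it by continuity in $t$. Both sides are continuous in $t$: the left side trivially, and the dual sum because the bounds of Lemma \ref{lem:T_mu_t_props} are locally uniform in $t$. The residue sum is the residue of an integrand depending analytically on $t$, so it extends continuously to $t=0$. Since the lemma is only used after integrating against $h(t)\,dt$, this measure-zero point is in any case harmless.
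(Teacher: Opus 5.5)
Your proposal is correct and follows the same route as the paper: Mellin inversion on $\operatorname{Re}(s)=c>1$, the factorization $L(s,\operatorname{Sym}^2 t)=\zeta(s+2it)\zeta(s)\zeta(s-2it)$ with its functional equation $L(s,\operatorname{Sym}^2 t)=\mathcal{G}_{\mu_t}(1-s)L(1-s,\operatorname{Sym}^2 t)$, and a contour shift to $\operatorname{Re}(s)=1-c$ that picks up the residues at $s=1-2it,1,1+2it$. Your bookkeeping also fills a point the paper's proof passes over: the residue sum is $|\zeta(1+2it)|^2\mathcal{P}_g(t)$, not $\mathcal{P}_g(t)$ as literally stated, and this is the normalization that yields the right-hand side of \eqref{equ:maass-voronoi} after weighting by $h(t)\tau_t(\ell)/(\pi|\zeta(1+2it)|^2)$; your continuity treatment of $t=0$, which the paper does not address, is a harmless addition.
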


\begin{proof}
A standard Euler product computation gives $L(s,\operatorname{Sym}^2,t)=\zeta(s+2it)\zeta(s)\zeta(s-2it)$ for $\operatorname{Re}(s)>1$. Mellin inversion yields
\[
\sum_{n=1}^{\infty}A_t(n)g(n)=\frac{1}{2\pi i}\int_{(c)}\widetilde{g}(s)L(s,\operatorname{Sym}^2,t)ds.
\]
Since $A_t(n)=A_t(n)$, the same Dirichlet series appears for the right-hand side. Using $\widetilde{\mathcal{T}_{\mu_t}g}(s)=\widetilde{g}(1-s)\mathcal{G}_{\mu_t}(s)$ and $L(s,\operatorname{Sym}^2,t)=\mathcal{G}_{\mu_t}(1-s)L(1-s,\operatorname{Sym}^2,t)$, we obtain
\[
\sum_{n=1}^{\infty}A_t(n)(\mathcal{T}_{\mu_t}g)(n)=\frac{1}{2\pi i}\int_{(1-c)}\widetilde{g}(w)L(w,\operatorname{Sym}^2,t)dw.
\]
Shifting the contour from $\operatorname{Re}(s)=c$ to $\operatorname{Re}(s)=1-c$ picks up residues at $s=1-2it,1,1+2it$, giving $\mathcal{P}_t(g)$. Hence the identity holds.
\end{proof}

\begin{corollary}\label{cor:maass-voronoi-discrete}
Let $h$ be Kuznetsov-admissible with $h(\pm i/2)=0$. Then for $g\in\mathcal{S}$,
\[
\sum_{u\in\mathcal{B}}\omega_u h(t_u)\lambda_u(\ell)\sum_{n=1}^{\infty}A_u(n)g(n)=\sum_{u\in\mathcal{B}}\omega_u h(t_u)\lambda_u(\ell)\sum_{n=1}^{\infty}A_u(n)\mathcal{T}_{\mu_u}g(n).
\]
\end{corollary}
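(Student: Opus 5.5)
The corollary follows by subtracting the continuous-spectrum part of Theorem \ref{thm:maass-voronoi} from both sides, using Lemma \ref{lem:maass-eisenstein-voronoi} for each fixed $t$.

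\textbf{Step 1: structure of the difference.} Write $I_\ell(g;h)=I^{\mathrm{disc}}_\ell(g;h)+I^{\mathrm{cont}}_\ell(g;h)$ and similarly $I^\vee_\ell(g;h)=I^{\vee,\mathrm{disc}}_\ell+I^{\vee,\mathrm{cont}}_\ell$. Here the discrete parts are the two sides of the desired identity, and
\[
I^{\mathrm{cont}}_\ell(g;h)-I^{\vee,\mathrm{cont}}_\ell(g;h)=\frac1\pi\int_{\mathbb R}\frac{h(t)\tau_t(\ell)}{|\zeta(1+2it)|^2}\Bigl(\sum_{n\ge1}A_t(n)g(n)-\sum_{n\ge1}A_t(n)\mathcal T_{\mu_t}g(n)\Bigr)dt .
\]

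\textbf{Step 2: evaluate the inner bracket.} For each fixed $t\in\mathbb R$ the inner bracket is given by Lemma \ref{lem:maass-eisenstein-voronoi}. That lemma is stated for $C_c^\infty$, but its proof uses only Mellin inversion and a contour shift, so it holds verbatim for $g\in\mathcal S$: $\widetilde g$ is entire and essentially bounded, and $\zeta(s+2it)\zeta(s)\zeta(s-2it)$ has polynomial growth in vertical strips. The lemma expresses the bracket through the residues of $\widetilde g(s)\zeta(s+2it)\zeta(s)\zeta(s-2it)$ at $s=1,1\pm2it$. The residue at $s=1+2it$ is $\widetilde g(1+2it)\zeta(1+4it)\zeta(1+2it)$, and the others are analogous. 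Multiplying by $|\zeta(1+2it)|^{-2}=1/(\zeta(1+2it)\zeta(1-2it))$ should turn them exactly into the three terms of $\mathcal P_g(t)$ as weighted in \eqref{equ:maass-voronoi}. For example, $\widetilde g(1+2it)\zeta(1+4it)/\zeta(1-2it)$ arises this way, and the $s=1$ residue $\widetilde g(1)\zeta(1+2it)\zeta(1-2it)$ becomes $\widetilde g(1)$.

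\textbf{Step 3: cancel and justify.} Steps 1 and 2 show that $I^{\mathrm{cont}}_\ell-I^{\vee,\mathrm{cont}}_\ell$ equals the right side of \eqref{equ:maass-voronoi}. Subtracting this from Theorem \ref{thm:maass-voronoi} leaves $I^{\mathrm{disc}}_\ell(g;h)=I^{\vee,\mathrm{disc}}_\ell(g;h)$, which is the corollary.

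The main care point is the bookkeeping of signs and of which side carries $\mathcal P$. The orientation of the contour shift from $\operatorname{Re}s=c>1$ to $1-c<0$ determines whether the residues appear with a plus sign on the $g$-side, and I would check this against the sign convention in \eqref{equ:maass-voronoi}. The delicate point near $t=0$, where the individual residues have poles from $\zeta(1\pm4it)$, is handled because their sum is regular there and is already matched by the theorem.

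The remaining analytic issue is interchanging the $t$-integral with the $n$-sums. This is justified by the rapid decay of $h$, the bound $|\zeta(1+2it)|^{-1}\ll\log(2+|t|)$, the polynomial bound on $\tau_t$, and the uniform-in-$t$ rapid decay of $\mathcal T_{\mu_t}g$ from Lemma \ref{lem:T_mu_t_props}. The $y^{-A}$ bounds there come from contour integrals whose $\mathcal G_{\mu_t}$ factor grows only polynomially in $t$, so the decay is uniform in $t$ up to a polynomial factor.
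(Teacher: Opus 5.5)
Your proposal is correct and is the argument the paper intends: the corollary is stated without a separate proof, as the immediate consequence of Theorem~\ref{thm:maass-voronoi} once the Eisenstein contribution is cancelled using Lemma~\ref{lem:maass-eisenstein-voronoi}. You were right to recompute the three residues of $\widetilde g(s)\zeta(s+2it)\zeta(s)\zeta(s-2it)$ and multiply by $|\zeta(1+2it)|^{-2}$, rather than quoting the lemma's displayed $\mathcal{P}$ literally (which already carries that normalization); this is what makes the cancellation match the right-hand side of \eqref{equ:maass-voronoi} with the correct plus sign.
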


\begin{lemma}[Spectral projector]
\label{lem:maass-spectral-projector}
Fix a cuspidal spectral parameter $t_0$, and put
\[
\mathcal B[t_0]:=\{u\in\mathcal B:t_u^2=t_0^2\}.
\]
There is an even entire function $H_{t_0}$, rapidly decreasing on every fixed
horizontal strip, such that
\[
H_{t_0}(t_u)=
\begin{cases}
1,&u\in\mathcal B[t_0],\\
0,&u\notin\mathcal B[t_0],
\end{cases}
\qquad
H_{t_0}(\pm i/2)=0.
\]
\end{lemma}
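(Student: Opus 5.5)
Since $H_{t_0}$ must be entire, it is determined on all of $\mathbb{C}$ by its values at the points $t_u$; my plan is therefore to build it as an infinite product (a Weierstrass-type product) with prescribed zeros, multiplied by a Gaussian damping factor. Write the distinct cuspidal spectral values as $\{t_0^2\}\cup\{s_j^2\}_{j\ge1}$, where $s_j^2\neq t_0^2$ and $s_j>0$; these are all real because the level-one Selberg eigenvalue conjecture is known. By Weyl's law $\#\{j: s_j\le T\}\asymp T^2$, so $\sum_j s_j^{-2-\delta}<\infty$ for every $\delta>0$ (in fact $\sum_j s_j^{-3}<\infty$). The first step is to form the even entire function
\[
P(t)=\prod_{j\ge1}E_1\!\left(\frac{t^2}{s_j^2}\right),\qquad E_1(w)=(1-w)e^{w},
\]
which converges because $\sum_j|t|^4/s_j^4<\infty$. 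It vanishes exactly at $t=\pm s_j$, and $P(t_0)\neq0$. Since the zero set in $t^2$ has convergence exponent $1$, the standard Hadamard-type bounds make $P$ of order at most $2$ and finite type, so $\log|P(t)|\le C_\varepsilon|t|^{2+\varepsilon}$. I would like order exactly $2$ with type controlled, and I deal with this in the damping step.

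Next, to enforce $H_{t_0}(\pm i/2)=0$ (note $\pm i/2$ is not a cuspidal parameter, so the factor below does not spoil the normalization) and to get rapid decay, I set
\[
H_{t_0}(t)=\frac{(t^2+\tfrac14)\,P(t)\,e^{-\beta t^2 + \beta t_0^2}}{(t_0^2+\tfrac14)\,P(t_0)}.
\]
This function is even and entire, vanishes at $\pm s_j$ and at $\pm i/2$, and equals $1$ at $\pm t_0$. It therefore takes the required values on $\mathcal B[t_0]$ and off $\mathcal B[t_0]$.

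The main obstacle is rapid decay on horizontal strips: the factor $e^{-\beta t^2}$ has to beat the growth of $P$ along real directions. Here I would bound $P$ more sharply. Let $n(r)=\#\{j:s_j\le r\}\sim cr^2$. For real $t$ or $t$ in a strip, write $\log|P(t)|=\sum_j\bigl(\log|1-t^2/s_j^2|+\operatorname{Re} t^2/s_j^2\bigr)$ and estimate it by a Stieltjes integral against $dn(r)$. Terms with $s_j\le|t|$ contribute $O(|t|^2\log|t|)$, because $n(|t|)\ll|t|^2$ and each term is $O(\log|t|+|t|^2/s_j^2)$; the terms with $s_j>|t|$ contribute $O(|t|^4\sum_{s_j>|t|}s_j^{-4})=O(|t|^2)$. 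This gives $|P(t)|\le e^{C|t|^2\log|t|}$, so no constant $\beta$ suffices.

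The fix is to replace the Gaussian by $e^{-t^2\log(t^2+2)}$-type damping. I would use the entire even function
\[
\exp\!\bigl(-t^2\,\psi(t)\bigr),\qquad \psi(t)=\log\Gamma\text{-type},
\]
for instance $1/\Gamma(1+t^2)^{2}$ composed appropriately. Alternatively, and more simply, I would divide by $\Gamma$-factors: multiply by $1/\bigl(\Gamma(A+it)\Gamma(A-it)\bigr)^{M}$-free constructions, or by $e^{-t^4}$. Since $e^{-t^4}$ blows up on the diagonals, I would instead take $\exp(-t^2)\cdot\prod$ with the zeros sparsified. The cleanest route I would try first is to replace $P$ by the ratio
\[
P(t)=\frac{Z(t)}{(t^2-t_0^2)^{\kappa}}
\]
with $Z$ built from the Selberg zeta function $Z(\tfrac12+it)$. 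This function is entire, has zeros exactly at the cuspidal parameters, and has order $2$ with growth $e^{O(|t|^2)}$ in strips, which follows from the known (Hejhal) bounds via its functional equation. With that in hand, a Gaussian $e^{-\beta t^2}$ with $\beta$ large beats $Z$ on any horizontal strip, and it also beats the trivial-zero and Eisenstein factors, since the continuous-spectrum factor $\zeta$ contributes only $e^{O(|t|\log|t|)}$. Finally, I have to divide out any multiplicities of zeros at the $t_u$ and handle the zero of $Z$ at $t=\pm t_0$; this is done by dividing by $(t^2-t_0^2)^{\kappa}$, where $\kappa$ is the order of that zero.
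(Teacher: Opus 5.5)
Your skeleton is the same as the paper's: the canonical product $P(t)=\prod_j(1-t^2/s_j^2)e^{t^2/s_j^2}$ over the other spectral parameters, the factor $t^2+\frac14$, normalization at $t_0$, and a damping factor. The gap is that you never settle on a damping factor that works.

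You discard $e^{-t^4}$ because it blows up along the diagonals. That objection does not apply. The lemma only asks for decay on horizontal strips, and no nonzero entire function decays in every direction; your Gaussian, for instance, blows up on the imaginary axis. For $t=x+iy$ with $|y|\le c$ one has $\operatorname{Re}(t^4)=x^4-6x^2y^2+y^4\ge x^4-6c^2x^2$. Hence $|e^{-t^4}|\le e^{-x^4+6c^2x^2}$, which overwhelms your bound $|P(t)|\le e^{C|t|^2\log(2+|t|)}$; in fact any bound of order $<4$ would do. Therefore
\[
H_{t_0}(t)=\frac{(t^2+\tfrac14)\,P(t)\,e^{-t^4}}{(t_0^2+\tfrac14)\,P(t_0)\,e^{-t_0^4}}
\]
already proves the lemma. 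This is exactly the paper's argument; the paper uses $(t^2+\frac14)^2$ and needs only that the product has order $<4$. Incidentally, your first-paragraph claim that $P$ has finite type is false. It has order $2$ but maximal type, since $\sum_j s_j^{-2}$ diverges (Lindel\"of's theorem), which is what your later estimate reflects.

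The route you actually commit to does not work as stated. For the non-compact surface $\mathrm{PSL}_2(\mathbb Z)\backslash\mathbb H$ the Selberg zeta function is only meromorphic. The parabolic term of the trace formula contributes $\frac{\Gamma'}{\Gamma}(s+\frac12)$ to $Z'/Z$, so $Z$ has poles at $s=-\frac12-k$ for $k\ge 0$. In particular $Z(\frac12+it)$ has a pole at $t=i$. The resulting $H_{t_0}$ would then not be entire, nor even holomorphic in $|\operatorname{Im}t|<1+\varepsilon$ as Kuznetsov-admissibility requires. Further problems:
\begin{itemize}
\item $Z(\frac12+it)$ is not even in $t$, since its functional equation carries a nontrivial factor.
\item Its zeros are not only the even cuspidal parameters: odd forms, the points $s=\rho/2$, and trivial zeros also occur.
\item The growth bound in strips that you attribute to Hejhal is asserted rather than derived.
\end{itemize}
These defects could be patched, but only with work the lemma does not need.

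Of your other suggestions:
\begin{itemize}
\item $e^{-t^2\log(t^2+2)}$ is not entire.
\item Sparsifying the zeros is impossible, since every $\pm s_j$ must be a zero.
\item $1/\Gamma(1+t^2)^M$ with $M$ large would work by Stirling, but you did not carry it out.
\end{itemize}
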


\begin{proof}
The packet $\mathcal B[t_0]$ is finite by finite multiplicity of the
cuspidal Laplace spectrum.  Weyl's law implies that the set
$\{\pm t_u:u\notin\mathcal B[t_0]\}$ has exponent of convergence at most
$2$.  Hence the Weierstrass product theorem gives an even entire function
$Q_{t_0}$ of order $<4$ such that
\[
Q_{t_0}(\pm t_u)=0\quad(u\notin\mathcal B[t_0]),
\qquad
Q_{t_0}(\pm t_0)\neq0.
\]
No cuspidal spectral parameter is equal to $\pm i/2$.  Therefore
\[
H_{t_0}(t):=
\frac{(t^2+1/4)^2Q_{t_0}(t)e^{-t^4}}
{(t_0^2+1/4)^2Q_{t_0}(t_0)e^{-t_0^4}}
\]
has the required values and double zeros at $\pm i/2$.  Since
$Q_{t_0}$ has order $<4$, the factor $e^{-t^4}$ gives rapid decay on
every fixed horizontal strip.
\end{proof}

\begin{corollary}\label{cor:single-average-sum}
Let $t_0$ be a cuspidal spectral parameter. Then for any $g\in\mathcal{S}$, we have
\[
   \sum_{u\in\mathcal{B}[t_0]} \omega_u \lambda_u(\ell)\sum_{n=1}^{\infty}A_u(n)g(n)=\sum_{u\in\mathcal{B}[t_0]}\omega_u \lambda_u(\ell) \sum_{n=1}^{\infty}A_u(n)\mathcal{T}_{\mu_u}g(n).
\]
\end{corollary}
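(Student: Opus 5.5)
The plan is to specialize Corollary \ref{cor:maass-voronoi-discrete} to the spectral projector $h=H_{t_0}$ from Lemma \ref{lem:maass-spectral-projector}. Two things need care. First, Corollary \ref{cor:maass-voronoi-discrete} requires a Kuznetsov-admissible test function, meaning holomorphic in $|\operatorname{Im}(t)|<1+\varepsilon$, even, rapidly decaying, and vanishing at $\pm i/2$. Second, $H_{t_0}$ must kill every $u\notin\mathcal B[t_0]$.

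\textbf{Admissibility of $H_{t_0}$.} The function $H_{t_0}$ is even and entire. It is rapidly decreasing on every fixed horizontal strip, in particular on $|\operatorname{Im}(t)|\le 1+\varepsilon$ for any $\varepsilon>0$. It has (double) zeros at $\pm i/2$. Hence $H_{t_0}$ is Kuznetsov-admissible with $H_{t_0}(\pm i/2)=0$. This is exactly the hypothesis of Corollary \ref{cor:maass-voronoi-discrete}, so for every $g\in\mathcal S$ we get
\[
\sum_{u\in\mathcal{B}}\omega_u H_{t_0}(t_u)\lambda_u(\ell)\sum_{n\ge1}A_u(n)g(n)=\sum_{u\in\mathcal{B}}\omega_u H_{t_0}(t_u)\lambda_u(\ell)\sum_{n\ge1}A_u(n)\mathcal{T}_{\mu_u}g(n).
\]

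\textbf{Collapsing the spectral sum.} By Lemma \ref{lem:maass-spectral-projector}, $H_{t_0}(t_u)=\mathbf 1_{u\in\mathcal B[t_0]}$. Both sides therefore reduce to finite sums over the packet $\mathcal B[t_0]$, and the weights become exactly $\omega_u\lambda_u(\ell)$. This is the claimed identity.

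\textbf{Main obstacle.} The only delicate point is that the termwise collapse is legitimate, i.e.\ that the full spectral sums in Corollary \ref{cor:maass-voronoi-discrete} converge absolutely, so that dropping zero terms is harmless.

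For the left side this is immediate. The inner sum $\sum_n A_u(n)g(n)$ is bounded polynomially in $t_u$, since $A_u(n)\ll_\epsilon n^{1/2+\epsilon}$ uniformly by Hecke bounds and $g\in\mathcal S$. Combined with the rapid decay of $H_{t_0}$ and Weyl's law, the sum converges absolutely.

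For the right side I would use Lemma \ref{lem:T_mu_t_props} with $t=t_u$ real (there are no exceptional eigenvalues in level one). The Mellin integral for $\mathcal{T}_{\mu_u}g(y)$ has polynomial dependence on $t_u$, because of the Stirling growth of $\mathcal G_{\mu_t}$ on a line $\operatorname{Re}(s)=\sigma>1$. This gives $\sum_n A_u(n)\mathcal T_{\mu_u}g(n)\ll (1+|t_u|)^{B}$ for some fixed $B$. The rapid decay of $H_{t_0}$ then absorbs this growth.

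In fact, both sides of Corollary \ref{cor:maass-voronoi-discrete} were already understood as absolutely convergent in its derivation from Theorem \ref{thm:maass-voronoi}, so this step amounts to recording that uniformity. Once it is in place, the corollary follows.
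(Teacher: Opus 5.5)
Your proposal is correct and matches the paper's argument. The paper proves this corollary in one line by taking $h=H_{t_0}$ from Lemma~\ref{lem:maass-spectral-projector} in Corollary~\ref{cor:maass-voronoi-discrete}, after which the weights $H_{t_0}(t_u)=\mathbf 1_{u\in\mathcal B[t_0]}$ collapse both spectral sums. Your ``main obstacle'' is not really one: once Corollary~\ref{cor:maass-voronoi-discrete} is granted for the admissible $h=H_{t_0}$, every term with $u\notin\mathcal B[t_0]$ vanishes identically, so both sides are finite sums and no convergence argument is needed at this step.
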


\begin{proof}
It follows by taking $h(t)=H_{t_0}(t)$.
\end{proof}

Let $t_0$ be a cuspidal spectral parameter. For $g\in\mathcal{S}$, define
\begin{equation*}
    F_{\ell,g,t_0}(x)=\sum_{u\in\mathcal{B}[t_0]} \omega_u \lambda_u(\ell)\sum_{n=1}^{\infty}A_u(n)g(nx).
\end{equation*}
for $x>0$. Then
\begin{lemma}\label{lem:maass-Fg-schwartz}
For $g\in\mathcal{S}$ and $t_0$ as above, we have
\[
F_{\ell,g,t_0}(x)=\frac{1}{x}\sum_{u\in\mathcal{B}}\omega_u \lambda_u(\ell)\sum_{n=1}^{\infty}A_u(n)\omega_u \lambda_u(\ell)\mathcal{T}_{\mu_{t_0}}g\left(\frac{n}{x}\right)
\]
and $F_{\ell,g,t_0}\in\mathcal{S}$.
\end{lemma}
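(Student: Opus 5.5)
The statement as printed contains typographical slips. I will prove the intended identity
\[
F_{\ell,g,t_0}(x)=\frac1x\sum_{u\in\mathcal{B}[t_0]}\omega_u\lambda_u(\ell)\sum_{n\ge1}A_u(n)\,\mathcal{T}_{\mu_{t_0}}g\!\left(\frac nx\right),
\]
in which the sum runs over $\mathcal{B}[t_0]$ and the weight $\omega_u\lambda_u(\ell)$ appears once. Together with $F_{\ell,g,t_0}\in\mathcal{S}$, this is the exact analogue of Lemma \ref{lem:Fg-in-S}, and the proof follows that lemma.

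\emph{The identity.} Fix $x>0$ and put $g_x(y)=g(xy)$. Then $g_x\in\mathcal{S}$ and $\widetilde{g_x}(s)=x^{-s}\widetilde g(s)$. Hence
\[
\widetilde{\mathcal{T}_{\mu_t}g_x}(s)=x^{s-1}\widetilde g(1-s)\mathcal{G}_{\mu_t}(s),
\]
and Mellin inversion gives $\mathcal{T}_{\mu_t}g_x(y)=x^{-1}\mathcal{T}_{\mu_t}g(y/x)$ for every $t$. For $u\in\mathcal{B}[t_0]$ we have $t_u=\pm t_0$. Since $\mathcal{G}_{\mu_t}(s)=G_0(s+2it)G_0(s)G_0(s-2it)$ is even in $t$, it follows that $\mathcal{T}_{\mu_u}=\mathcal{T}_{\mu_{t_0}}$. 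Applying Corollary \ref{cor:single-average-sum} to $g_x$ then yields the displayed formula.

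\emph{Rapid decay in $x$.}
\begin{itemize}
\item As $x\to\infty$: write $F_{\ell,g,t_0}(x)=\frac1{2\pi i}\int_{(c)}\widetilde g(s)x^{-s}\sum_{u\in\mathcal{B}[t_0]}\omega_u\lambda_u(\ell)L(s,\operatorname{Sym}^2u)\,ds$ with $c$ large. The Dirichlet series converges absolutely for $\operatorname{Re}(s)>1$; this follows, for instance, from the Rankin--Selberg bound $\sum_{n\le X}|\lambda_u(n)|^2\ll X$. Since $\widetilde g$ is essentially bounded, this gives $F_{\ell,g,t_0}(x)\ll x^{-c}$ for every $c>1$.
\item As $x\to0^+$: use the dual expression. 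For level one the cuspidal parameters $t_0$ are real (Selberg's eigenvalue conjecture is known in level one), so Lemma \ref{lem:T_mu_t_props} applies with $\delta=0$. That lemma gives $\mathcal{T}_{\mu_{t_0}}g(y)\ll y^{-A}$ as $y\to\infty$ and $\mathcal{T}_{\mu_{t_0}}g(y)=O(1)$ as $y\to0^+$. With $|A_u(n)|\ll n^{1+\epsilon}$ (again from Rankin--Selberg), this yields
\[
\frac1x\sum_{n\ge1}|A_u(n)|\,\bigl|\mathcal{T}_{\mu_{t_0}}g(n/x)\bigr|\ll x^{-1}\sum_{n}n^{1+\epsilon}(n/x)^{-A}\ll x^{A-2-\epsilon},
\]
which is $O(x^{B})$ for any prescribed $B$.
\end{itemize}
The sum over $u$ is finite, so no uniformity in $u$ is needed.

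\emph{Derivatives.} Differentiating under the sum (justified by absolute convergence) gives $x^jF_{\ell,g,t_0}^{(j)}(x)=F_{\ell,g_j,t_0}(x)$, where $g_j(y)=y^jg^{(j)}(y)$. Since $g_j\in\mathcal{S}$, the decay bounds above apply to each $F_{\ell,g_j,t_0}$. This gives $\sup_x|x^mF_{\ell,g,t_0}^{(j)}(x)|<\infty$ for all $m\in\mathbb{Z}$ and $j\ge0$, so $F_{\ell,g,t_0}\in\mathcal{S}$. Smoothness is immediate from locally uniform convergence.

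The main obstacle is the behaviour as $x\to0^+$. It depends on two inputs: that $\mathcal{T}_{\mu_{t_0}}g$ is bounded near $0$, which requires $t_0$ real (otherwise Lemma \ref{lem:T_mu_t_props} only gives $y^{-2\delta}$), and that it decays rapidly at infinity. Both are supplied by Lemma \ref{lem:T_mu_t_props}, together with a polynomial bound on $A_u(n)$.
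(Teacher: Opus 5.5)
Your proposal is correct and follows the paper's argument: rescale to $g_x$, apply Corollary \ref{cor:single-average-sum} using $\mathcal{T}_{\mu_u}=\mathcal{T}_{\mu_{t_0}}$ on $\mathcal{B}[t_0]$, then read off decay as $x\to\infty$ from the original sum and as $x\to0^+$ from the dual sum. In the paper, ``rapid decay of $h$'' should be the rapid decay of $\mathcal{T}_{\mu_{t_0}}g$ at infinity, as you use it, and your reduction of the derivatives to $g_j(y)=y^jg^{(j)}(y)$ fills in a step the paper omits. Two harmless remarks: for $x\le 1$ only values $\mathcal{T}_{\mu_{t_0}}g(y)$ with $y\ge 1$ occur, so the reality of $t_0$ is not actually needed. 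Also, the bound $\sum_{n\le X}|\lambda_u(n)|^2\ll X$ by itself gives absolute convergence of $L(s,\operatorname{Sym}^2u)$ only for $\operatorname{Re}(s)>3/2$ (via Cauchy--Schwarz), not $\operatorname{Re}(s)>1$; this suffices because you take $c$ large.
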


\begin{proof}
Set $g_x(y)=g(xy)$. Notice that for $u\in\mathcal{B}[t_0]$, $\mathcal{T}_{\mu_u}=\mathcal{T}_{\mu_{t_0}}$. By Corollary \ref{cor:single-average-sum} and $\mathcal{T}_{\mu_u}g_x(n)=\frac{1}{x}\mathcal{T}_{\mu_u}g(n/x)$, we have
\[
F_{\ell,g,t_0}(x)=\frac{1}{x}\sum_{u\in\mathcal{B}}\omega_u \lambda_u(\ell)\sum_{n=1}^{\infty}A_u(n)\mathcal{T}_{\mu_u}g\left(\frac{n}{x}\right)\frac{1}{x}\sum_{u\in\mathcal{B}}\omega_u \lambda_u(\ell)\sum_{n=1}^{\infty}A_u(n)\mathcal{T}_{\mu_{t_0}}g\left(\frac{n}{x}\right).
\]
Rapid decay as $x\to\infty$ follows from $g\in\mathcal{S}$, and as $x\to0^+$ from the rapid decay of $h$. Hence $F_{g,\ell,h}\in\mathcal{S}$.
\end{proof}

Define
\[
    A_{\ell,g,t_0}(s)=\sum_{u\in\mathcal B[t_0]}\omega_u \lambda_u(\ell)L(s,\operatorname{Sym}^2u),\quad\mathrm{Re}(s)>1.
\]

\begin{corollary}\label{cor:maass-avg-meromorphic}
For every $g\in\mathcal S$, the function $A_{\ell,g,t_0}(s)$ is an entire function satifying the functional equation
\[
    A_{\ell,g,t_0}(s)=\mathcal{G}_{\mu_{t_0}}(1-s)A_{\ell,g,t_0}(1-s).
\]
\end{corollary}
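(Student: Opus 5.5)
We mirror the holomorphic argument (Lemma \ref{lem:Fg-in-S} and Corollaries \ref{cor:A-ell-meromorphic}, \ref{cor:A-ell-fe}), with Corollary \ref{cor:single-average-sum} in place of Theorem \ref{thm:holo-voronoi}. Note that $A_{\ell,g,t_0}(s)$ does not depend on $g$; the subscript only records the auxiliary test function.

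\emph{Step 1: meromorphy and entireness.} For $\operatorname{Re}(s)>1$ we take the Mellin transform of $F_{\ell,g,t_0}$. Absolute convergence (from $g\in\mathcal S$ and polynomial growth of $A_u(n)$) lets us interchange sums and integrals, giving
\[
\widetilde{F_{\ell,g,t_0}}(s)=A_{\ell,g,t_0}(s)\,\widetilde g(s).
\]
By Lemma \ref{lem:maass-Fg-schwartz}, $F_{\ell,g,t_0}\in\mathcal S$, so the left side is entire. Fix $s_0\in\mathbb C$. We choose $g\in\mathcal S$ with $\widetilde g(s_0)\neq0$; for example, start from any $g$ and, if $\widetilde g(s_0)=0$, divide $\widetilde g(s)$ by $(s-s_0)$ the appropriate number of times, staying in $\mathcal S$ by Lemma \ref{lem:mellin-Schwartz}. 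Then $A_{\ell,g,t_0}=\widetilde{F}/\widetilde g$ is holomorphic near $s_0$. Since $A_{\ell,g,t_0}$ is independent of $g$, these local continuations agree on overlaps, so it extends to an entire function.

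\emph{Step 2: functional equation.} We use the second expression in Lemma \ref{lem:maass-Fg-schwartz}:
\[
F_{\ell,g,t_0}(x)=x^{-1}\sum_{u\in\mathcal B[t_0]}\omega_u\lambda_u(\ell)\sum_n A_u(n)\,\mathcal T_{\mu_{t_0}}g(n/x).
\]
The Mellin transform of $x\mapsto x^{-1}\phi(n/x)$ is $n^{s-1}\widetilde\phi(1-s)$. Taking $\phi=\mathcal T_{\mu_{t_0}}g$ with $\widetilde\phi(w)=\widetilde g(1-w)\mathcal G_{\mu_{t_0}}(w)$ gives
\[
\widetilde{F_{\ell,g,t_0}}(s)=A_{\ell,g,t_0}(1-s)\,\mathcal G_{\mu_{t_0}}(1-s)\,\widetilde g(s)
\]
for $\operatorname{Re}(s)<0$, where the Dirichlet series at $1-s$ converges absolutely. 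Comparing with Step 1 on the region where both hold by analytic continuation, and dividing by $\widetilde g(s)$ (nonzero on a dense open set), we obtain
\[
A_{\ell,g,t_0}(s)=\mathcal G_{\mu_{t_0}}(1-s)A_{\ell,g,t_0}(1-s)
\]
as an identity of meromorphic functions.

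\emph{Main obstacle.} The delicate point is justifying the Mellin transform of the dual side. Lemma \ref{lem:T_mu_t_props} gives rapid decay of $\mathcal T_{\mu_{t_0}}g$ at $\infty$ but only $O(y^{-2\delta})$ behaviour at $0$. For the Mellin integral in $x$ this controls $\phi(n/x)$ as $x\to\infty$, so we need $\operatorname{Re}(1-s)>2\delta$ together with $\operatorname{Re}(1-s)>1$ for the $n$-sum. We therefore work on the half-plane $\operatorname{Re}(s)<0$ and, if needed, $\operatorname{Re}(s)<1-2\delta$. Since $t_0$ is cuspidal at level one, $t_0\in\mathbb R$ (Selberg), so $\delta=0$, $\mathcal G_{\mu_{t_0}}$ is holomorphic for $\operatorname{Re}>0$, and no extra poles arise. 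Finally, since $\mathcal G_{\mu_{t_0}}(1-s)$ has poles, we check consistency: the entire function $A_{\ell,g,t_0}(s)$ must vanish at the poles of $\mathcal G_{\mu_{t_0}}(1-s)A(1-s)$. This follows automatically, because the right side equals the entire function $\widetilde F/\widetilde g$.
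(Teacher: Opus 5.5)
Your proof is correct and is essentially the paper's argument: Mellin-transform the two expressions for $F_{\ell,g,t_0}$ from Lemma \ref{lem:maass-Fg-schwartz}, use $F_{\ell,g,t_0}\in\mathcal S$ to get an entire $\widetilde{F_{\ell,g,t_0}}$, choose $g$ with $\widetilde g(s_0)\neq 0$ for entireness, and compare the two transforms to obtain the functional equation. Your dual-side formula $\widetilde{F_{\ell,g,t_0}}(s)=\widetilde g(s)\,\mathcal G_{\mu_{t_0}}(1-s)A_{\ell,g,t_0}(1-s)$ for $\operatorname{Re}(s)<0$ is the correct form of the paper's display, which writes $\widetilde{\mathcal T_{\mu_{t_0}}g}(s)$ where $\widetilde{\mathcal T_{\mu_{t_0}}g}(1-s)$ is meant; also, in your closing remark it is strictly $A_{\ell,g,t_0}(1-s)$, not $A_{\ell,g,t_0}(s)$, that must vanish at the poles of $\mathcal G_{\mu_{t_0}}(1-s)$, though your conclusion there is right.
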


\begin{proof}
Taking Mellin transforms for the two different expressions of $F_{\ell,g,t_0}$ yields
\[
    \widetilde{F_{\ell,g,t_0}}(s)=\widetilde{g}(s)A_{\ell,g,t_0}(s)=\widetilde{\mathcal{T}_{\mu_{t_0}}g}(s)A_{\ell,g,t_0}(1-s)
\]
So the desired functional equation holds. By
\[
    A_{\ell,g,t_0}(s)=\frac{\widetilde{{F_{\ell,g,t_0}}}(s)}{\widetilde{g}(s)},
\]
we see that it is meromorphic in $\mathbb{C}$. Noting that for every complex number $s_0$, we can choose $g\in\mathcal{S}$ such that $\widetilde{g}(s_0)\neq 0$. Hence $A_{\ell,g,t_0}(s)$ is entire.
\end{proof}

\begin{proof}[Proof of Theorem \ref{thm:maass-fe}]
Let $u\in\mathcal{B}[t_0]$. Notice that the set $\mathcal{B}[t_0]$ is a finite set. By a similar argument as in the proof of Theorem \ref{thm:holo-fe}, we see that $L(s,\operatorname{Sym}^2u)$ is a linear combination of $A_{\ell_1,g,t_0}(s), A_{\ell_2,g,t_0}(s),\dots,A_{\ell_r,g,t_0}(s)$. So $L(s,\operatorname{Sym}^2u)$ is entire and satisfies the desired functional equation.
\end{proof}

\bibliographystyle{plain}
\bibliography{biblio}

\end{document}